\def\R{{\hbox{\bf R}}}
\def\C{{\hbox{\bf C}}}
\def\P{{\hbox{\bf P}}}
\def\E{{\hbox{\bf E}}}
\newenvironment{proof}{\noindent {\bf Proof} }{\endprf\par}
\def \endprf{\hfill  {\vrule height6pt width6pt depth0pt}\medskip}
\theoremstyle{plain}
\newtheorem{theorem}{Theorem}
\newtheorem{corollary}{Corollary}
\newtheorem{lemma}{Lemma}
\newtheorem{question}{Question}
\newtheorem{claim}{Claim}
\newtheorem{conjecture}{Conjecture}
\theoremstyle{remark}
\newtheorem{remark}{Remark}
\theoremstyle{definition}
\newtheorem{definition}{Definition}
\begin{document}
\title{Bilinear and quadratic variants on the Littlewood-Offord problem}

\author{Kevin P. Costello}
\address{Department of Mathematics, Georgia Institute of Technology, Atlanta, GA 30308}
\email{kcostell@@math.gatech.edu}
\thanks{This research was supported by NSF Grants DMS-0635607 and DMS-0456611}
\maketitle
\begin{abstract}
If $f(x_1, \dots, x_n)$ is a polynomial dependent on a large number of independent Bernoulli random variables, what can be said about the maximum concentration of $f$ on any single value?  For linear polynomials, this reduces to one version of the classical Littlewood-Offord problem: Given nonzero constants $a_1, \dots a_n$, what is the maximum number of sums of the form $\pm a_1 \pm a_2 \dots \pm a_n$ which take on any single value?  Here we consider the case where $f$ is either a bilinear form or a quadratic form.  For the bilinear case, we show that the only forms having concentration significantly larger than $n^{-1}$ are those which are in a certain sense very close to being degenerate.  For the quadratic case, we show that no form having many nonzero coefficients has concentration significantly larger than $n^{-1/2}$.  In both cases the results are nearly tight.  
\end{abstract}

\section{Introduction: The Linear Littlewood-Offord Problem}
In their study of the distribution of the number of real roots of random polynomials, Littlewood and Offord \cite{LO} encountered the following problem:
\begin{question}
 Let $a_1$, \dots $a_n$ be real numbers such that $|a_i|>1$ for every $i$.  What is the largest number of the $2^n$ sums of the form
\begin{equation*}
 \pm a_1 \pm a_2 \dots \pm a_n
\end{equation*}
  that can lie in any interval of length 1?
\end{question}
Littlewood and Offord showed an upper bound of $O(2^n \frac{\log n}{\sqrt{n}})$ on the number of such sums.  Erd\H{o}s \cite{Erd} later removed the $\log n$ factor from this result, giving an exact bound of $\binom{n}{\lfloor n/2  \rfloor}$ via Sperner's Lemma, which is tight in the case where all of the $a_i$ are equal.  The same bound was later shown by Kleitman \cite{Kle} in the case where the $a_i$ are complex numbers.  Rescaling Kleitman's result and using Sterling's approximation gives the following probabilistic variant of the lemma:
\begin{theorem} \label{thm:quantLO}
 Let $n>0$, and let $a_1, \dots a_n$ be arbitrary complex numbers, at least $m \geq 1$ of which are nonzero.  Let $x_1, \dots x_n$ be independent random variables drawn uniformly from $\{1, -1\}$.  Then
\begin{equation*}
 \sup_{c \in \R} \P(\sum_{i=1}^n a_i x_i=c) \leq \min\{\frac{1}{2}, \frac{1}{\sqrt{m}}\}
\end{equation*}
\end{theorem}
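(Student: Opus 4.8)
The plan is to reduce the statement to the classical counting form of the Littlewood--Offord problem due to Erd\H{o}s and Kleitman and then estimate a binomial coefficient by Stirling's formula; the two bounds in $\min\{\frac12,\frac{1}{\sqrt m}\}$ are handled separately. For the bound $\frac12$: since $m\ge 1$ there is an index $i$ with $a_i\neq 0$. Condition on all the $x_j$ with $j\neq i$; then $\sum_j a_j x_j$ has the same conditional law as $a_i x_i$ shifted by a constant, so it is supported on at most two distinct points and assigns conditional probability at most $\frac12$ to any value. Averaging over the remaining coordinates gives $\P(\sum_j a_j x_j=c)\le\frac12$ for every $c$. For the bound $\frac{1}{\sqrt m}$ we may discard the zero terms: writing $S=\{i:a_i\neq 0\}$ and $k=|S|\ge m$, we have $\P(\sum_{i=1}^n a_i x_i=c)=\P(\sum_{i\in S}a_i x_i=c)=2^{-k}N(c)$, where $N(c)$ counts the sign vectors $(\eps_i)_{i\in S}\in\{-1,1\}^S$ with $\sum_{i\in S}\eps_i a_i=c$.

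The key input is then Kleitman's theorem, in the form $N(c)\le\binom{k}{\lfloor k/2\rfloor}$. To see that it applies, rescale all the $a_i$ by a common large positive real $\lambda$; this does not change the set of sign vectors counted by $N(c)$, only relabels which point they sum to, so the bound to be proved is unaffected. Choosing $\lambda$ with $|\lambda a_i|\ge 1$ for all $i\in S$, the relevant sign vectors correspond bijectively (via $\eps\mapsto\{i:\eps_i=1\}$) to the subsets of $S$ whose partial sum of the vectors $\lambda a_i\in\C\cong\R^2$ equals one fixed point, hence lie in a ball of radius $1$; Kleitman's extension of the Erd\H{o}s--Littlewood--Offord bound to Euclidean space then says there are at most $\binom{k}{\lfloor k/2\rfloor}$ of them. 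Therefore $\P(\sum_{i=1}^n a_i x_i=c)\le 2^{-k}\binom{k}{\lfloor k/2\rfloor}$.

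It remains to note that $c_k:=2^{-k}\binom{k}{\lfloor k/2\rfloor}$ is nonincreasing in $k$, so $c_k\le c_m$ for $k\ge m$, and that by Stirling's approximation $c_m=(1+o(1))\sqrt{2/(\pi m)}$, which is strictly below $1/\sqrt m$ for all large $m$; the finitely many small cases (for instance $c_1=c_2=\frac12$) are checked by hand, giving $c_m\le 1/\sqrt m$ for every $m\ge1$. Combining with the first paragraph yields $\sup_c\P(\sum_{i=1}^n a_i x_i=c)\le\min\{\frac12,\frac{1}{\sqrt m}\}$. The only substantive ingredient is Kleitman's theorem, which we simply quote; the rest is bookkeeping and a standard binomial estimate, so I do not expect a real obstacle --- the one point needing a little care is the rescaling that puts the $a_i$ into the regime where Kleitman's statement is literally applicable.
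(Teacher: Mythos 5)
Your proof is correct and follows exactly the route the paper indicates (it gives no formal proof, only the remark that ``Rescaling Kleitman's result and using Sterling's approximation gives the following probabilistic variant''): you reduce to Kleitman's complex Littlewood--Offord bound after discarding zero coefficients, rescale to meet its $|a_i|\ge1$ hypothesis, and then bound $2^{-k}\binom{k}{\lfloor k/2\rfloor}$ by $1/\sqrt m$ via monotonicity in $k$ plus Stirling, with the trivial conditioning argument for the $\tfrac12$ bound. No gaps.
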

In a sense Theorem \ref{thm:quantLO} can be thought of as a quantitative description of the dispersion of a random walk: No matter what step sizes the walk takes, as the number of steps increases the walk becomes less and less concentrated on any particular value.  In this interpretation the $\sqrt{n}$ in the bound is also unsurprising; if the step sizes are small integers, we would expect the walk to typically be about at an integer about $O(\sqrt{n})$ distance from 0 at time $n$, so the concentration at individual points near 0 should be roughly $n^{-1/2}$.  

In 1977 Hal\'{a}sz \cite{Hal} gave several far reaching generalizations of Theorem \ref{thm:quantLO}, both to higher dimensions and to more general classes of random variables.  One (rescaled) result of his is
\begin{theorem} \label{thm:Halasz2d}
Let $a_1, \dots a_n$ be vectors in $\R^d$ such that no proper subspace of $\R^d$ contains more than $n-m$ of the $a_i$.  Let $x_1, \dots x_n$ be independent complex-valued random variables such that for some $\rho<1$, 
\begin{equation*}
\sup_{i, c} \P(x_i=c) \leq \rho.
\end{equation*}
Then
\begin{equation*}
 \sup_{c \in \R} \P(\sum_{i=1}^n a_i x_i =c) = O_{\rho, d}(m^{-d/2}).
\end{equation*}
\end{theorem}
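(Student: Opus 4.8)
The plan is to pass to the characteristic function of $S:=\sum_{i=1}^n a_i x_i$, factor it using independence, and read off the exponent $d/2$ from a geometric estimate powered by the hypothesis that no proper subspace of $\R^d$ carries more than $n-m$ of the $a_i$. Write $\phi_S(\xi)=\E\,e^{2\pi i\langle\xi,S\rangle}=\prod_{i=1}^n\phi_i(\langle a_i,\xi\rangle)$, where $\phi_i$ is the one-dimensional characteristic function of $x_i$ and the factorization is exactly independence. By the standard Esseen concentration inequality in $\R^d$, for every $r>0$
$$\sup_c\P(\|S-c\|\le 1/r)\ \le\ C_d\,r^{-d}\int_{\|\xi\|\le r}|\phi_S(\xi)|\,d\xi,$$
and $\P(S=c)\le\P(\|S-c\|\le 1/r)$ for every $r$. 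So it is enough to choose $r$ suitably (adapted to the magnitudes of the $a_i$) and to bound the integral by $O_{\rho,d}(m^{-d/2}r^d)$. The only fussy point at this stage is the choice of $r$: if the $a_i$ carry no discrete structure the concentration is already far below $m^{-d/2}$, so one may work in the complementary regime, and I will suppress these routine normalizations.

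Next I would discharge the hypothesis $\rho<1$ by the usual ``Bernoulli part'' coupling: $\rho<1$ lets one write each $x_i$ as $u_i+\theta_i b_i$ with $\theta_i$ an independent uniform sign, $u_i$ a further random variable, and $b_i\ne 0$; conditioning on the $u_i$ turns $S$ into a constant plus $\sum_i\theta_i(b_i a_i)$, and the vectors $b_i a_i$ inherit the subspace hypothesis because $b_i\ne 0$. Hence it suffices to treat the Bernoulli case, where $\phi_i(t)=\cos(2\pi t)$ and, using $|\cos\theta|\le e^{-\frac12\sin^2\theta}$,
$$|\phi_S(\xi)|\ \le\ \exp\Big(-\frac12\sum_{i=1}^n\sin^2(2\pi\langle a_i,\xi\rangle)\Big).$$

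The remaining task, which is the heart of the matter, is to bound $r^{-d}\int_{\|\xi\|\le r}\exp(-\frac12 F(\xi))\,d\xi$, where $F(\xi):=\sum_i\sin^2(2\pi\langle a_i,\xi\rangle)$. I would decompose dyadically in the value $T$ of $F$; since $e^{-T/2}$ is summable, it suffices to prove a sublevel-set bound of the shape $\mathrm{vol}\{\xi:\|\xi\|\le r,\ F(\xi)\le T\}\le C_d (T/m)^{d/2}r^d$. Near any point $\xi_0$ at which every $\langle a_i,\xi_0\rangle$ is close to an integer one has $F(\xi)\approx(2\pi)^2(\xi-\xi_0)^\top M(\xi-\xi_0)$ with $M=\sum_i a_ia_i^\top$, and the subspace hypothesis says precisely that every hyperplane misses at least $m$ of the $a_i$, so $M$ is nondegenerate in every direction and the sublevel sets of such a form have the volume claimed, with the relevant determinant controlled direction by direction. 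Carrying this out rigorously is the crux, and I expect it — rather than any single inequality — to be the main obstacle: the $a_i$ occur at wildly different magnitudes, and the summands $\sin^2(2\pi\langle a_i,\xi\rangle)$ are only quadratic near their zeros and (quasi)periodic globally, so the estimate has to be run scale by scale — at each scale one discards the $a_i$ too short to matter and applies the subspace hypothesis to the rest — and then closed by induction on $d$, the base case $d=1$ being the one-dimensional Littlewood--Offord / Kolmogorov--Rogozin estimate in the spirit of Theorem~\ref{thm:quantLO}.

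Assembling the three steps yields $\sup_c\P(S=c)=O_{\rho,d}(m^{-d/2})$. The bound is essentially sharp: taking $n=dm$ with $m$ copies each of the standard basis vectors $e_1,\dots,e_d$ of $\R^d$ and $x_i$ Bernoulli, one checks that no proper subspace contains more than $n-m$ of the $a_i$, while $S$ has $d$ independent, roughly binomial coordinates, so that $\sup_c\P(S=c)\asymp_d m^{-d/2}$.
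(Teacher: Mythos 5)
The paper does not prove this theorem; it is quoted (after rescaling) from Hal\'asz's 1977 paper and used as a black box, so there is no internal proof to compare your argument against. With that said, your outline follows the standard high-level template one would use to reprove it, and your two framing steps are sound: the $d$-dimensional Esseen inequality reduces the problem to bounding $r^{-d}\int_{\|\xi\|\le r}|\phi_S(\xi)|\,d\xi$, and a symmetrization argument reduces to a $\sin^2$-type bound on $|\phi_S|$. (One small caution on the symmetrization: for a general distribution with $\sup_c\P(x_i=c)\le\rho<1$ you cannot literally write $x_i=u_i+\theta_ib_i$ with $b_i$ a fixed nonzero constant and $\theta_i$ an independent sign; the standard fix is to pass to the symmetrized variable $x_i-x_i'$, or to condition on an event of probability $\ge 1-\rho$ on which $x_i$ does carry such a Bernoulli part, and this costs only $\rho$-dependent constants. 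This is fixable but worth stating correctly.)

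The genuine gap is exactly where you flag it: the sublevel-set volume bound $\mathrm{vol}\{\xi:\|\xi\|\le r,\ F(\xi)\le T\}\lesssim (T/m)^{d/2}r^d$ for $F(\xi)=\sum_i\sin^2(2\pi\langle a_i,\xi\rangle)$. The local picture you describe --- quadratic behavior $F(\xi)\approx (2\pi)^2(\xi-\xi_0)^\top M(\xi-\xi_0)$ with $M=\sum a_ia_i^\top$ near a point $\xi_0$ where all $\langle a_i,\xi_0\rangle$ are near integers --- does not by itself give a global volume bound. Such $\xi_0$ need not be isolated or even form a discrete set when the $a_i$ have rational relations, the $a_i$ can be at wildly different scales so $M$ does not uniformly control $F$, and the quadratic approximation is only valid locally. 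Hal\'asz's actual argument supplies exactly the missing global ingredient: the sublevel sets of $\sum_i\|\langle a_i,\xi\rangle\|_{\R/\Z}^2$ have an approximate additive (sumset/doubling) structure --- if $\xi_1,\xi_2$ lie in the $T$-sublevel set then $\xi_1\pm\xi_2$ lies in the $4T$-sublevel set --- and combining this with the trivial integral bound $\int_{[0,1]^d}|\phi_S|^{2k}\,d\xi$, interpreted via Plancherel as counting near-coincidences among $2k$-fold sums of the $a_i$, yields the $m^{-d/2}$ exponent once the subspace hypothesis is used to show that a positive fraction of the $a_i$ contribute in every direction. Your induction-on-$d$ and scale-by-scale heuristics gesture at this but do not carry it out, and the induction base you cite (the one-dimensional case) is itself proved by the Hal\'asz mechanism rather than by the elementary Erd\H{o}s/Sperner argument, so there is some circularity in invoking it casually. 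In short: correct skeleton, correct identification of where the difficulty lies, but the central lemma that makes the exponent $d/2$ appear is missing.
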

The original Littlewood-Offord lemma corresponds to the special case where $d=1$ and the $x_i$ are iid Bernoulli variables.  Again this can be thought of as a dispersion result: a linear polynomial which depends on a large number of independent, moderately dispersed random variables will itself be very dispersed.  Furthermore, the dispersion will be greater if the coefficients of the polynomial are in some sense truly $d-$dimensional.  

One application of these results is in the study of random matrices, since several key parameters of a matrix (e.g. the determinant, or the distance from one row to the span of the remaining rows) are linear forms in the entry of a single row or column of the matrix.  Koml\'{o}s \cite{Kom} used Theorem \ref{thm:quantLO} in 1967 to show that a random Bernoulli matrix (one whose entries are independently either 1 or -1) is almost surely non-singular.  Later, Kahn, Koml\'{o}s and Szemer\'{e}di \cite{KKS} used the ideas of Hal\'{a}sz to show that the singularity probability was exponentially small of the size of the matrix.  The current best bound for this probability, $(\frac{1}{\sqrt{2}}+o(1))^n$ for an $n \times n$ matrix \cite{BVW}, comes from a detailed analysis of the \textit{inverse} of the Littlewood-Offord problem, which can be thought of as 
\begin{question}
 If $\sum a_i x_i$ is highly concentrated on one value, what can be said about the $a_i$?
\end{question}
The intuition here if the sum takes on a single value with probability close to $n^{-1/2}$, then the $a_i$ should be very highly structured.  Tao and Vu \cite{TV} and Rudelson and Vershynin \cite{RV} showed that this was in fact the case: If the sum takes on a single value with probability at least $n^{-c}$ for some fixed $c$, then the coefficients must have been drawn from a short generalized arithmetic progression.  One special case of this result can be expressed more quantitatively in the following theorem from \cite{TVquant}

\begin{theorem} \label{thm:sharpILO}
Let $a_1, \dots a_n$ be nonzero complex numbers, and let $\epsilon<\frac{1}{2}$ and $\alpha>0$ be fixed. Then there is an $N_0=N_0(\epsilon, \alpha)$ such that if $n>N_0$ and for $x_i$ independently and uniformly chosen from $\{1, -1\}$ 
\begin{equation*}
 \P(\sum_{i=1}^n a_i x_i = c) \geq n^{-1/2-\epsilon},
\end{equation*}
then there is a $d \in R$ such that  if $n>N_0$ all but $n^{1-\alpha}$ of the $a_i$ have the form
\begin{equation*}
 a_i= d b_i,
\end{equation*}
where the $b_i$ are integers such that $|b_i| \leq n^{\epsilon+\alpha}$.  

The same holds true if the $x_i$ are independent and identically distributed ``lazy walker'' variables satisfying  $\P(x_i=0)=2 \rho$, $\P(x_i=1)=\P(x_i=-1)=1-\rho$ for some $0<\rho<1$ ($N_0$ is now also dependent on $\rho$).  
\end{theorem}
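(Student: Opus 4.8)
The natural approach is Hal\'asz's Fourier-analytic method, pushed to its quantitative limit and combined with a Freiman-type structure theorem. Write $e(x):=e^{2\pi\sqrt{-1}\,x}$ and let $\mu$ be the law of $S:=\sum_i a_ix_i$ on $\C\cong\R^2$, so that $\widehat\mu(\xi)=\prod_{i=1}^n\E\,e\big(\mathrm{Re}(\overline\xi a_i x_i)\big)$. For Bernoulli $x_i$ each factor equals $\cos\!\big(2\pi\,\mathrm{Re}(\overline\xi a_i)\big)$, and for the lazy walker it equals $1-2\rho+2\rho\cos\!\big(2\pi\,\mathrm{Re}(\overline\xi a_i)\big)$; in both cases it is bounded in modulus by $\exp\!\big(-c\,\|2\,\mathrm{Re}(\overline\xi a_i)\|_{\R/\Z}^2\big)$ for a constant $c>0$ (with $c=c(\rho)$ in the second case), where $\|\cdot\|_{\R/\Z}$ is distance to the nearest integer. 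By Esseen's concentration inequality — after the routine reduction replacing the exact probability $\P(S=c)$ by a small-ball probability $\P(|S-c|\le\lambda)$ at a fixed scale $\lambda$, which costs only constants — the hypothesis yields
\begin{equation*}
\int_{|\xi|\le 1/\lambda}\exp\!\big(-c\,F(\xi)\big)\,d\xi\ \gg\ n^{-1/2-\epsilon},\qquad F(\xi):=\sum_{i=1}^n\|2\,\mathrm{Re}(\overline\xi a_i)\|_{\R/\Z}^2 .
\end{equation*}
This is the only step that sees the distribution of the $x_i$, so the two cases of the theorem proceed identically from here.

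Since $e^{-cF}\le 1$, a dyadic pigeonholing in the value of $F$ produces a threshold $t$ with $1\le t=O(\log n)$ for which the symmetric set $E:=\{\,|\xi|\le 1/\lambda:F(\xi)\le t\,\}$ has $|E|\gg n^{-1/2-\epsilon+o(1)}$. From $\|u-v\|_{\R/\Z}\le\|u\|_{\R/\Z}+\|v\|_{\R/\Z}$ and convexity, $E-E\subseteq\{F\le 4t\}$ and more generally $kE\subseteq\{F\le k^2t\}$, so $E$ is an approximate subgroup at the scale $t=O(\log n)$ all of whose small sumsets and difference sets again have measure $n^{-1/2-\epsilon+o(1)}$. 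Now invoke a quantitative Freiman-type theorem in $\R^2$: such a set is contained in, and essentially fills, a generalized arithmetic progression of bounded rank and of volume $\asymp|E|\cdot n^{o(1)}$, and contains a long arithmetic sub-progression, say with common difference $\xi_0$. Because $|E|$ is only polynomially small while $E\subseteq\{F\le O(\log n)\}$, this forces a scale $d$ such that, as $\theta$ runs over this sub-progression, $\|2\,\mathrm{Re}(\overline\theta a_i)\|_{\R/\Z}$ is simultaneously small for most $i$ — equivalently, $a_i$ lies within a tiny distance of $d\mathbb{Z}$.

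It remains to make this quantitative and uniform in $i$. Averaging $F(\xi)\le t$ over $E$ gives $\sum_i\frac1{|E|}\int_E\|2\,\mathrm{Re}(\overline\xi a_i)\|_{\R/\Z}^2\,d\xi\le t=O(\log n)$, so by Markov all but at most $n^{1-\alpha}$ of the indices $i$ contribute less than $t\,n^{\alpha-1}=n^{\alpha-1+o(1)}$; for each such $i$, combining this smallness with the sub-progression structure of $E$ from the previous step pins $a_i$ exactly to the form $a_i=db_i$ with $b_i\in\mathbb{Z}$. The bound $|b_i|\le n^{\epsilon+\alpha}$ then drops out of the relation between the spacing of the arithmetic structure and the measure $|E|\gg n^{-1/2-\epsilon+o(1)}$: a progression too fine relative to $d$ would make $\{F\le t\}$ smaller than $n^{-1/2-\epsilon}$. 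Arranging that the various $n^{o(1)}$ and dyadic losses are absorbed into the margin $\alpha$ fixes $N_0=N_0(\epsilon,\alpha)$, and $N_0(\epsilon,\alpha,\rho)$ in the lazy-walker case, the only new dependence entering through the constant $c(\rho)$ above.

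The main obstacle is the Freiman-type input. A black-box application of Freiman's theorem gives doubling-to-rank and doubling-to-volume bounds that are exponential or worse in the doubling constant, which here is $n^{o(1)}$, and that is not enough to extract the sharp exponents $\epsilon+\alpha$ and $1-\alpha$; one needs either a version of Freiman's theorem with essentially polynomial bounds in this regime, or — as in the actual proofs of such sharp inverse theorems — a more hands-on argument exploiting the specific geometry of the level sets $\{F\le t\}$ to sidestep the lossy general structure theory. Verifying that the passage from exact to small-ball concentration in the first step is genuinely harmless for the stated conclusion, and that no step leaks more than $n^{o(1)}$, is the other point requiring care.
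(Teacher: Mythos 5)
This theorem is not proved in the paper you are working from: Theorem~\ref{thm:sharpILO} is imported verbatim from Tao and Vu's preprint \cite{TVquant} (``A Sharp Inverse Littlewood--Offord Theorem''), and the present paper simply cites it as a black box for use in Lemma~\ref{lemma:reverseinverse}. So there is no in-paper proof to compare your attempt against.

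As a reconstruction of the external proof, your outline correctly identifies the Fourier-analytic skeleton — Esseen's inequality, the level sets of $F(\xi)=\sum_i\|2\,\mathrm{Re}(\overline\xi a_i)\|_{\R/\Z}^2$, dyadic pigeonholing, and the approximate-group structure of $\{F\le t\}$ under addition. The reduction from $\P(S=c)$ to the small-ball probability is indeed harmless in the direction you need ($\P(S=c)\le\P(|S-c|\le\lambda)$ trivially), and the factorization $\widehat\mu(\xi)=\prod_i(1-2\rho+2\rho\cos(\cdots))$ is exactly why the lazy-walker case runs in parallel. That part of the sketch is sound.

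But the central step is missing, and you yourself flag it: you need to pass from ``$E=\{F\le t\}$ has measure $n^{-1/2-\epsilon+o(1)}$ and small doubling'' to ``most $a_i$ lie in $d\Z$ with $|a_i/d|\le n^{\epsilon+\alpha}$,'' and a black-box Freiman theorem in $\R^2$ cannot do this. The doubling constant you obtain is only $n^{o(1)}$, and Freiman-type theorems give rank/volume bounds that are at best polynomial (in the continuous/Bilu-type setting) and typically exponential in the doubling; plugging a $n^{o(1)}$ doubling into such a theorem does not deliver the precise exponent $\epsilon+\alpha$ in the bound on $|b_i|$, nor the exceptional-set size $n^{1-\alpha}$. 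The actual Tao--Vu argument avoids this by not invoking a general structure theorem: it exploits the explicit geometry of the level set (a neighborhood of a Bohr-type set determined by the $a_i$ themselves) and runs a bespoke ``spread'' argument that reads the arithmetic structure of the $a_i$ directly off the support of $\{F\le t\}$ and the measure lower bound, with each step losing only a controlled polynomial factor that is explicitly absorbed into $\alpha$. Without supplying that tailored argument, your proposal stops short of a proof: the first three paragraphs reduce the problem to a structure-extraction step which the fourth paragraph then concedes is unresolved. There is also a secondary point you do not address — since the $a_i$ are complex, one must first argue that the hypothesis forces them to be essentially one-dimensional before any ``common difference $d\in\R$'' can be isolated; this is typically handled inside the same bespoke argument, but it is another place the generic Freiman route gives you no purchase.
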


\section{Statement of Main Results}
Our goal here will be to develop and strengthen extensions of Theorem \ref{thm:quantLO} and related results to polynomials of higher degree, in particular bilinear and quadratic forms.  To begin, let us consider the following result (implicit in \cite{CTV}), which we reprove here for convenience:
\begin{theorem} \label{thm:BiLO}
 Let $A=a_{ij}, 1 \leq i \leq m, 1 \leq j \leq n$ be an array of complex numbers, and suppose that at least $r$ distinct rows of $A$ each contain at least $r$ nonzero entries.  Let $x=(x_1, \dots x_m)$ and $y=(y_1, \dots y_n)$ be two vectors whose $m+n$ entries are random variables independently and uniformly chosen from $\{1, -1\}$.  Then
\begin{equation*}
 \sup_c \P(x^TAy=\sum_{i=1}^m \sum_{j=1}^n a_{ij} x_i y_j=c)=O(r^{-1/2})
\end{equation*}
\end{theorem}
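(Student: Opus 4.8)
The plan is to deduce the bilinear bound from the linear Littlewood--Offord estimate (Theorem~\ref{thm:quantLO}), applied twice, by conditioning first on $y$. Fixing $y$ and writing
\begin{equation*}
x^T A y = \sum_{i=1}^m x_i (Ay)_i ,
\end{equation*}
we see that for fixed $y$ this is a linear form in the independent Bernoulli variables $x_1,\dots,x_m$, with coefficients $(Ay)_i = \sum_{j=1}^n a_{ij} y_j$. Hence, setting $m_y = \#\{\, i : (Ay)_i \neq 0 \,\}$, Theorem~\ref{thm:quantLO} gives $\sup_c \P(x^T A y = c \mid y) \le m_y^{-1/2}$ whenever $m_y \ge 1$ (and the conditional supremum is trivially at most $1$ in any case). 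So it suffices to show that $m_y \ge r/2$ with probability $1 - O(r^{-1/2})$.

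For this, fix a set $I$ of exactly $r$ rows of $A$, each having at least $r$ nonzero entries; such an $I$ exists by hypothesis. For each $i \in I$, the quantity $(Ay)_i$ is itself a linear form in the independent Bernoulli variables $y_1,\dots,y_n$ with at least $r$ nonzero coefficients, so Theorem~\ref{thm:quantLO} applied with target value $0$ gives $\P\bigl( (Ay)_i = 0 \bigr) \le r^{-1/2}$. Therefore $Z := \#\{\, i \in I : (Ay)_i = 0 \,\}$ satisfies $\E Z \le r \cdot r^{-1/2} = \sqrt r$, and Markov's inequality yields $\P(Z \ge r/2) \le 2 r^{-1/2}$. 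On the complementary event, $m_y \ge |I| - Z > r/2$, which is at least $1$ since $m_y$ is an integer.

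Combining the two steps by the law of total expectation,
\begin{equation*}
\P(x^T A y = c) = \E_y\, \P(x^T A y = c \mid y) \le \P(Z \ge r/2) + \frac{1}{\sqrt{r/2}} \le \frac{2 + \sqrt 2}{\sqrt r} = O(r^{-1/2}) ,
\end{equation*}
uniformly in $c$, which is the claim.

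There is no genuinely difficult step here --- the argument is a clean two-level conditioning --- but one point deserves care: the events $\{(Ay)_i = 0\}$ for $i \in I$ all depend on the single random vector $y$, hence are \emph{not} independent, so one cannot hope to concentrate $Z$ via a Chernoff-type bound and must instead rely on the first-moment (Markov) estimate. This costs nothing here, since that estimate already gives failure probability $O(r^{-1/2})$, matching the target concentration, though any improvement beyond the $r^{-1/2}$ rate would require a genuinely different argument in place of Markov's inequality.
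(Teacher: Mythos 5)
Your proof is correct and follows essentially the same two-level conditioning as the paper: apply the linear Littlewood--Offord bound to $(Ay)_i$ to control (via Markov) the number of rows where $Ay$ vanishes, then apply it again to the linear form $\sum_i (Ay)_i x_i$ on the complementary event. The only cosmetic difference is the threshold ($r/2$ here versus $r/4$ in the paper), which does not affect the argument.
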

\begin{proof}:
Without loss of generality we may assume that the rows in question correspond to the variables $x_1$ through $x_r$.  

Let $W_i=\sum_i a_{ij} y_j$, and let $W$ denote the number of $i$ between $1$ and $r$ for which $W_i$ is equal to 0.  We have
\begin{equation*}
 \P(x^T A y =c) \leq \P(W \geq \frac{r}{4})+\P(x^T A y=c \wedge W<\frac{r}{4}).
\end{equation*}
 We bound each term separately.  For the first term, we view $W$ as a sum of the indicator function of the events that each $W_i$ is equal to 0.  Since by Theorem \ref{thm:quantLO} each $W_i$ is equal to 0 with probability $O(r^{-1/2})$, it follows from linearity of expectation that $\E(W)=O(r^{1/2})$, and therefore from Markov's inequality that 
\begin{equation*}
 \P(W \geq \frac{r}{4}) =O(r^{-1/2}).
\end{equation*}
For the second term, we treat $y$ as fixed and write
\begin{equation*}
 x^TAy=\sum_i W_i x_i .  
\end{equation*}
If $W$ is at most $\frac{r}{4}$, then the right hand side is a linear form in the $x_i$ with at least $\frac{3r}{4}$ nonzero coefficients.  It follows from Theorem \ref{thm:quantLO} and taking expectations over $y$ that this term is $O(r^{-1/2})$.  
\end{proof}

In a certain sense this is a weaker result than we might expect.  If $A$ is an $n \times n$ matrix of small nonzero integers, then the magnitude of $x^T A y$ will typically be around $n$, so we might expect a concentration probability of $n^{-1}$ instead of $n^{-1/2}$.  However, Theorem \ref{thm:BiLO} is tight, as the polynomial $(x_1+...+x_n)(y_1+...+y_n)$ shows.  What our first main result shows is that every bilinear form with sufficiently large concentration probability is in some sense close to this degenerate example.
\begin{theorem} \label{thm:inversebi}
 Fix $\epsilon>0$.  Let $x=(x_1, \dots x_m)$ and $y=(y_1, \dots y_n)$ be independent random vectors whose entries are uniformly chosen from $\{1, -1\}$, and suppose that for some $r \leq m$ every row of the coefficient matrix $A$ of the bilinear form $x^TAy$ contains at least $r$ nonzero entries.  If $r$ and $m$ are sufficiently large and there is a function $f$ such that 
\begin{equation} \label{eqn:biconc}
 \P(x^T A y=f(y)) \geq r^{-1+\epsilon},
\end{equation}
then $A$ contains a rank one submatrix of size at least $(m-O_{\epsilon}(\frac{r}{\log^6 r})) \times (n-O_{\epsilon}(\frac{r}{\log^6 r})) $ (here the constant in the $O( )$ notation is as $r$ tends to infinity and is allowed to depend on $\epsilon$).

The same holds true if \eqref{eqn:biconc} holds when the entries of $y$ are independently set equal to 0 (with probability $1/2$) or $\pm 1$ (with probability $1/4$ each).  

In particular, this holds for the case where $f(y)=c$ is constant.  
\end{theorem}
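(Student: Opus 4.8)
The plan is to condition on $y$ and reduce the whole problem to the linear (inverse) Littlewood--Offord results already available. Writing $W_i(y)=\langle a_i,y\rangle$ for the $i$-th coordinate of $Ay$, one has $x^TAy=\sum_{i=1}^m x_iW_i(y)$, and hence $\P(x^TAy=f(y))=\E_y[\rho_y]$ with $\rho_y:=\P_x\!\big(\sum_i x_iW_i(y)=f(y)\big)$. Since $f(y)$ is a constant once $y$ is fixed, the case of general $f$ is no harder than the constant one, and replacing the entries of $y$ by lazy--walker variables only changes the law of the $W_i(y)$, not the shape of the argument. A standard averaging step turns $\E_y[\rho_y]\ge r^{-1+\epsilon}$ into: there is a set $S$ of $y$'s with $\P(S)\ge \frac{1}{2} r^{-1+\epsilon}$ on which $\rho_y\ge\frac{1}{2} r^{-1+\epsilon}$.

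The first main step is to describe $Ay$ for $y\in S$. Theorem~\ref{thm:quantLO}, applied to the form $\sum_i x_iW_i(y)$, bounds the number $N(y)$ of $i$ with $W_i(y)\ne0$ by $O(\rho_y^{-2})=O(r^{2-2\epsilon})$. I would then split $S$ according to the size of $N(y)$: below a threshold of the form $r^{(2-2\epsilon)/(1+2\delta)}$, with $\delta<\frac{1}{2}$ chosen just above $\frac{1}{2}-\epsilon$ (so that this exponent equals $1-\epsilon'$ for some $\epsilon'=\epsilon'(\epsilon)>0$), the vector $Ay$ has only $O_\epsilon(r^{1-\epsilon'})$ nonzero coordinates; above it, the inequality $\rho_y\ge N(y)^{-1/2-\delta}$ holds, so Theorem~\ref{thm:sharpILO} applies to $\sum_i x_iW_i(y)$ and shows that, apart from $N(y)^{1-\alpha}$ exceptional indices, the nonzero $W_i(y)$ are bounded integer multiples of a single $d(y)$. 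Passing to the (large) set of rows that is good for many $y\in S$ simultaneously, one obtains: for every such $y$, all but $O_\epsilon(r/\log^6r)$ coordinates of $Ay$ lie on a single arithmetic pattern $d(y)\cdot\{\hbox{bounded integers}\}$. The choice of $\alpha$ and the tracking of constants here is what produces the sixth power of the logarithm.

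The second main step patches these per--$y$ statements into one rank--one submatrix. By double counting, all but $O_\epsilon(r/\log^6r)$ rows lie outside the exceptional set for a positive proportion of $y\in S$; relabel one such ``persistently good'' row as row~$1$. For another persistently good row $i$ and many $y$, both $\langle a_1,y\rangle$ and $\langle a_i,y\rangle$ are small integer multiples of the same $d(y)$, so $\langle\beta a_1-\gamma a_i,y\rangle=0$ for some bounded integers $\beta,\gamma$; pigeonholing the finitely many pairs gives a fixed $v_i=\beta_i a_1-\gamma_i a_i$ with $\P_y(\langle v_i,y\rangle=0)\ge r^{-C_\epsilon}$ for a fixed $C_\epsilon$, whence by Theorem~\ref{thm:quantLO} $v_i$ has $O_\epsilon(r^{2C_\epsilon})$ nonzero coordinates, i.e.\ $a_i$ is proportional to $a_1$ off a few columns. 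Using in addition that these vanishings occur simultaneously, for the same $y$'s, across all good rows --- together with the elementary bound $2^{\dim V}$ on the number of $\pm1$ points in a subspace $V$ --- one bounds the rank of the relevant submatrix and confines all the column defects to one common set of $O_\epsilon(r/\log^6r)$ columns. Deleting the non--persistently--good rows and these columns leaves a submatrix each of whose rows is a scalar multiple of the restriction of $a_1$, i.e.\ a rank--one submatrix of the claimed size.

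The step I expect to be the genuine obstacle is this last patching. The per--$y$ exceptional sets depend on $y$, so a naive union over all (potentially very many) good rows is hopeless; and the pigeonhole over the multipliers costs a polynomial factor, so a single application of Theorem~\ref{thm:quantLO} only yields proportionality off $O(r^{2-2\epsilon})$ columns --- useless once $n$ is comparable to $r$. The point is that the $y$'s witnessing the structure form a set whose measure is a fixed negative power of $r$, so that one $y$ controls all but $O(r/\log^6r)$ rows at once; extracting a common column--defect set from this via a dimension count is the crux, and the requirement that every exceptional set be kept down to $O_\epsilon(r/\log^6r)$ is precisely what dictates the delicate, $\log r$--dependent choices of $\delta$ and $\alpha$ in Theorem~\ref{thm:sharpILO}.
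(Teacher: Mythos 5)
Your high-level plan --- condition on $y$, apply the inverse Littlewood--Offord theorem to the linear form $\sum_i x_i W_i(y)$ for ``good'' $y$'s, and then patch the per--$y$ structure into a global rank--one submatrix --- matches the skeleton of the paper's argument, and you correctly identify the patching step as the real difficulty. However, there are two genuine gaps.

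First, you split the good set $S$ according to whether $N(y)$, the number of nonzero coordinates of $Ay$, is above or below a threshold $\approx r^{1-\epsilon'}$, but you never say what to do when the below--threshold $y$'s carry the bulk of the measure. Knowing that $Ay$ has many zero coordinates for a positive--measure set of $y$'s does not immediately give any arithmetic structure on the rows --- it is a statement about simultaneous vanishing, not commensurability. The paper devotes an entire parallel argument to this ``atypical'' regime (Lemmas \ref{lemma:atypical}, \ref{lemma:manyfriendly}, \ref{lemma:friendlystruc}), replacing the inverse Littlewood--Offord input by Hal\'asz's two--dimensional bound (Theorem \ref{thm:Halasz2d}) applied to the vector sums $\sum_i (v_1(i), v_j(i)) y_i$ to deduce that pairs of rows are nearly proportional. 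Without this branch your proof is incomplete.

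Second, and more seriously, the patching step as sketched does not reach the claimed bound. Pairwise pigeonholing gives you, for each persistently good row $i$, a bounded pair $(\beta_i,\gamma_i)$ and a set of $O_\epsilon(r^{2C_\epsilon})$ columns on which $a_i$ fails to be proportional to $a_1$; these sets vary with $i$, and the ``$2^{\dim V}$'' dimension count you invoke does not confine them to a single common set of $O_\epsilon(r/\log^6 r)$ columns. The paper's mechanism for this is qualitatively different: it works with $k$--tuples of rows for $k$ up to $\log^7 r$, defines \emph{neighborliness} via the expected commensurability $\E_y\, Comm(v_1^T y,\dots,v_k^T y)$, and proves (Lemma \ref{lemma:neighborstruc}) not just that each $a_j$ is proportional to $a_1$ off a small set $S_j$, but the much stronger nesting bound $\prod_{j=2}^k |S_j\setminus\bigcup_{i<j}S_i|_1 = O_\epsilon(r^{1-5\epsilon/4})$, established by exposing the variables in $S_2$, then $S_3\setminus S_2$, etc., and applying Theorem \ref{thm:quantLO} at each stage. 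This multiplicative control of \emph{new} exceptional columns is then fed into the ``score'' double--counting argument (Methods 1 and 2 in the proof of Lemma \ref{lemma:largesubmatrix}) to show that some single tuple already accounts for all but $O(m/\log^6 r)$ rows, followed by an iterative cleanup that trades $m$ for $r$. Nothing in a purely pairwise argument produces this nesting, and it is precisely what keeps the union of column defects from ballooning; this is the idea your proposal is missing. The number--theoretic content behind Lemma \ref{lemma:neighborstruc} (Lemmas \ref{lemma:doubleembed} and \ref{lemma:azplusb}, on how often $a^Ty/b^Ty$ can be a fraction of small height) is also nontrivial and not subsumed by a single application of Theorem \ref{thm:sharpILO}.
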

\begin{remark}
 Note that we now require the stronger condition that \textit{every} row have many nonzero entries.  If this does not hold, we can first expose the $x_i$ corresponding to rows with few nonzero entries, then apply Theorem \ref{thm:inversebi} to the bilinear form on the remaining variables.  It follows that the rows of $A$ having many nonzero entries must correspond almost entirely to a rank one submatrix.  
\end{remark}
\begin{remark}
 The $-1$ in the exponent is sharp.  If $A$ is a small integer matrix, then $x^TAy$ will typically be on the order of $n$ in absolute value, so by the pigeonhole principle some value is taken on with probability $\Omega(n^{-1})$.  However, a randomly chosen such $A$ will with high probability not have rank one submatrices of size larger than $O(\log n)$.     
\end{remark}
In terms of the original bilinear form, a rank one submatrix corresponds to a form which factors completely as $x^TAy=g(x)h(y)$.   Theorem \ref{thm:inversebi} states that any bilinear form with sufficiently large concentration probability is highly structured in the sense that it can be made into one which factors by setting only a small portion of the variables equal to 0. 

We next turn our attention to quadratic forms $x^T A x$, where $x$ is again random.  Here we first aim to show
\begin{theorem} \label{thm:quadlo}
Let $A$ be an $n \times n$ symmetric matrix of complex numbers such that every row of $A$ has at least $r$ nonzero entries, where $r \geq exp((\ln n)^{1/4})$, and let $L$ be an arbitrary linear form.  Let $x$ be a vector of length $n$ with entries chosen uniformly and independently from $\{1, -1\}$.  Let $\delta>0$ be fixed.  Then
 \begin{equation*}
\sup_c  \P_A:=\P(x^TAx=L(x)+c)=O_{\delta} (r^{-1/2+\delta}).
 \end{equation*}
In particular, the above bound holds for the case where $L(x)$ is identically 0.
\end{theorem}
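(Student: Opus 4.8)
I would prove Theorem~\ref{thm:quadlo} by a decoupling argument that reduces the quadratic concentration problem to a bilinear one, and then iterate Theorem~\ref{thm:BiLO} (or a quantitative refinement of it). The starting point is the standard observation that if $x^TAx$ is concentrated, then so is a decoupled version of it involving two independent copies of $x$. Concretely, split the index set $[n]$ into two halves $I$ and $J$ by a random (or suitably chosen) partition, write $x = (u,v)$ with $u$ supported on $I$ and $v$ on $J$, and expand
\begin{equation*}
 x^TAx = u^TA_{II}u + 2u^TA_{IJ}v + v^TA_{JJ}v.
\end{equation*}
Conditioning on $u$, the concentration of $x^TAx$ on $L(x)+c$ forces the bilinear form $2u^TA_{IJ}v$ to be concentrated on a value depending only on $u$ (and $v$ through the already-conditioned diagonal-block pieces --- this is handled by first conditioning on $v$ as well and using a Cauchy--Schwarz / second-moment step to pass to two independent copies $v, v'$). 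The upshot is a bound of the shape $\P_A^2 \lesssim \sup_{c'} \P(u^TB v = c')$ where $B = A_{IJ}$, plus error terms, with $u$ and $v$ genuinely independent Bernoulli vectors.

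**Key steps, in order.** First, choose the partition $I \sqcup J = [n]$ so that the off-diagonal block $B = A_{IJ}$ inherits a robust nondegeneracy from the row condition on $A$: since every row of $A$ has at least $r$ nonzero entries, a random balanced partition leaves, with high probability, a constant fraction of rows of $B$ with $\Omega(r)$ nonzero entries (a Chernoff estimate). Second, apply the decoupling inequality to get $\P_A^2$ controlled by the concentration of $u^TBv$. Third --- and this is where the quantitative strength beyond Theorem~\ref{thm:BiLO} is needed --- invoke the \emph{inverse} bilinear result, Theorem~\ref{thm:inversebi}: if $\P(u^TBv = f(v)) \geq r^{-1+\epsilon}$, then $B$ has a rank-one submatrix omitting only $O(r/\log^6 r)$ rows and columns. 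Fourth, derive a contradiction with the nondegeneracy of $B$: a rank-one submatrix of that size, combined with the fact that many rows of $B$ have $\Omega(r) \gg r/\log^6 r$ nonzero entries, would force structure on $A$ itself (e.g.\ many rows of $A$ proportional to one another on a large common support), which one then shows is incompatible with the symmetric matrix having every row with $\geq r$ nonzero entries unless the concentration is already small --- alternatively, one re-randomizes the partition so that no fixed rank-one structure survives all partitions, forcing $\P_A \leq r^{-1+\epsilon}$ directly, and then bootstraps the exponent from $-1+\epsilon$ down toward $-1/2$. The hypothesis $r \geq \exp((\ln n)^{1/4})$ is what makes the $\log^6 r$ losses and the iteration affordable relative to $n$.

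**Main obstacle.** The hard part is the interface between the decoupling step and the inverse bilinear theorem: decoupling naturally produces a \emph{quadratic} concentration bound in terms of a bilinear one, but Theorem~\ref{thm:inversebi} only yields structural information (a large rank-one submatrix), not directly a numerical bound of the form $O(r^{-1/2+\delta})$. Converting "$B$ has a huge rank-one submatrix" back into "$\P_A$ is small" requires carefully tracking how the exceptional rows and columns of the submatrix interact with the diagonal blocks $A_{II}, A_{JJ}$ and with the linear form $L$, and ruling out the possibility that $A$ is a genuine near-degenerate example (like a rank-one perturbation). Handling the dependence of $f(v)$ on $v$ — rather than a constant — throughout the decoupling, and making sure the error terms from the second-moment/Cauchy--Schwarz steps stay below $r^{-1/2}$, is the delicate bookkeeping; I expect this to consume the bulk of the argument, with the combinatorics of the random partition and the Chernoff bounds being comparatively routine.
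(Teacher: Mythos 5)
Your opening moves—decoupling via Cauchy--Schwarz to pass to a bilinear form $u^T B v$ in two genuinely independent Bernoulli vectors, and then invoking the inverse bilinear Theorem~\ref{thm:inversebi} to extract a large rank-one submatrix of $B$—match the paper's strategy. But the final step of your plan has a genuine gap, and it is the crux of the whole argument.

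You propose to ``derive a contradiction with the nondegeneracy of $B$,'' arguing that a huge rank-one submatrix is ``incompatible with the symmetric matrix having every row with $\geq r$ nonzero entries unless the concentration is already small,'' or alternatively to re-randomize the partition so that ``no fixed rank-one structure survives all partitions.'' Neither of these can work, because the near-rank-one case is precisely the \emph{sharp} case of the theorem, not a degenerate case to be excluded. The example $A = \mathbf{1}\mathbf{1}^T$ (all ones), i.e.\ $x^TAx = (x_1+\cdots+x_n)^2$, is a symmetric matrix with every row full of nonzero entries, it is exactly rank one, the rank-one structure survives every partition, and its concentration is $\Theta(n^{-1/2})$---which is what the theorem asserts. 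There is no contradiction available; the structure extracted by Theorem~\ref{thm:inversebi} must be \emph{embraced}, not ruled out. In the paper, the shattering family of balanced partitions is used agglomeratively: each partition controls one off-diagonal block, and since every unordered pair of off-diagonal index pairs $\{i,j\},\{k,l\}$ appears in $Y_s \times Z_s$ for some partition $s$, one stitches together the per-partition rank-one structure into a single global conclusion (Lemma~\ref{lemma:rankone}): there is a principal minor $A'$ of size $n - O(r\log n/\log^5 r)$ and a genuine rank-one matrix $A''$ with $A' = A''$ off the diagonal. Because $x_i^2 = 1$, the diagonal discrepancy is absorbed into the constant, and one is reduced to bounding $\P\bigl((\sum b_i x_i)^2 = \sum c_i x_i + d\bigr)$ with all $b_i \neq 0$.

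That reduced problem is the second missing ingredient. You cannot get $O(m^{-1/2+\alpha})$ for it from the linear Littlewood--Offord theorem alone, because the event $(\sum b_i x_i)^2 = \sum c_i x_i + d$ couples the two linear forms. The paper's Lemma~\ref{lemma:quadfactor} handles this by another split-and-decouple step: write the event as an incidence between a random point $p = (t_2, s_2 - t_2^2)$ and a random line $l = \{y = 2t_1 x + t_1^2 - s_1 + d\}$, which are independent, and then invoke a \emph{probabilistic Szemer\'edi--Trotter bound} (Theorem~\ref{thm:weightst}, after Iosevich--Konyagin--Rudnev--Ten): $\P(p \in l) = O((q_p q_l)^{1/3} + q_p + q_l)$ where $q_p, q_l$ are the max point/line probabilities. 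The linear Littlewood--Offord theorem gives $q_p, q_l = O(m^{-1/2})$, and the $1/3$-power of the product term is what yields the $m^{-1/2+\alpha}$. A secondary case analysis (via Hal\'asz's $d=2$ theorem) handles the situation where the point or line concentration is too large because $(b_i)$ and $(c_i)$ are nearly proportional. None of this appears in your plan; without it, the reduction to the rank-one case leaves you without a way to actually bound $\P_A$, since the rank-one form is exactly the form realizing the extremal $r^{-1/2}$.

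In short: your decoupling and your use of Theorem~\ref{thm:inversebi} are on the right track, but the closing step cannot be a contradiction. You must (a) aggregate the per-partition rank-one structure into a global near-rank-one conclusion via a shattering family, and (b) prove directly---via an incidence-geometry argument, not linear Littlewood--Offord alone---that quadratic forms that are squares of linear forms (plus a linear correction) have concentration $O(m^{-1/2+\alpha})$.
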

We will then remove the assumption that \textit{every} row of $A$ have many nonzero entries, obtaining the following corollary which may be easier to apply in practice
\begin{corollary} \label{cor:quadlo2}
 Let $A$ be an $n \times n$ symmetric matrix of complex numbers such that at least $mn$ of the entries of $A$ are nonzero, where $m \geq 3exp((\ln n)^{1/4})$.  Let $L$ and $x$ be above.  Then for any $\delta>0$,
\begin{equation*}
 \sup_c  \P_A:=\P(x^TAx=L(x)+c)=O_{\delta} (m^{-1/2+\delta}).
\end{equation*}
\end{corollary}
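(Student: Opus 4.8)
The plan is to deduce Corollary~\ref{cor:quadlo2} from Theorem~\ref{thm:quadlo} by a \emph{cleaning} (peeling) step. Since $A$ has at least $mn$ nonzero entries, a typical row has about $m$ of them, so I would first extract a large principal submatrix each of whose rows is still dense, and then recover the general case by conditioning on the coordinates of $x$ lying outside that submatrix.

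\textbf{Step 1 (cleaning).} Put $t = \lceil m/3 \rceil$ and start with $S = \{1, \dots, n\}$. As long as some row $i$ of the principal submatrix $A_S$ (rows and columns indexed by $S$) has fewer than $t$ nonzero entries, delete $i$ from $S$. Because $A$ is symmetric, deleting $i$ also deletes column $i$, which at that stage has fewer than $t$ nonzero entries, so each deletion destroys at most $2t$ nonzero entries of $A$; after $k$ deletions at least $mn - 2tk$ of them survive. If $S$ were emptied ($k = n$) we would get $2tn \ge mn$, i.e.\ $t \ge m/2$, which fails for $m \ge 6$; hence the process halts with $S \ne \emptyset$, and then every row of $A_S$ has at least $t$ nonzero entries. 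In particular $|S| \ge t$, so, using $m \ge 3\exp((\ln n)^{1/4})$ and $|S| \le n$,
\begin{equation*}
t \ \ge\ m/3 \ \ge\ \exp((\ln n)^{1/4}) \ \ge\ \exp((\ln |S|)^{1/4}).
\end{equation*}
Thus $A_S$ is an $|S| \times |S|$ symmetric matrix all of whose rows are supported on at least $t$ entries, with $t \ge \exp((\ln |S|)^{1/4})$, so it satisfies the hypotheses of Theorem~\ref{thm:quadlo} with $r = t$.

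\textbf{Step 2 (conditioning).} Let $\overline S = \{1, \dots, n\} \setminus S$; since $S$ depends only on $A$, the coordinates $x_S$ and $x_{\overline S}$ are independent. Expanding $x^TAx$ and sorting each index into $S$ or $\overline S$ gives $x^TAx = x_S^T A_S x_S + \ell(x_S) + \kappa$, where, once $x_{\overline S}$ is fixed, $\ell$ is a linear form in $x_S$ and $\kappa$ a constant (both depending on $x_{\overline S}$); likewise, after fixing $x_{\overline S}$, the form $L(x)$ becomes a linear form in $x_S$ plus a constant. Hence for each value of $x_{\overline S}$ there are a linear form $\tilde L$ and a constant $\tilde c$ with
\begin{equation*}
\P(x^TAx = L(x) + c \mid x_{\overline S}) = \P(x_S^T A_S x_S = \tilde L(x_S) + \tilde c) = O_\delta(t^{-1/2+\delta}),
\end{equation*}
the last bound being Theorem~\ref{thm:quadlo} applied to $A_S$ (whose implied constant depends only on $\delta$). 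Averaging over $x_{\overline S}$ and using $t \ge m/3$ gives $\P_A = O_\delta(t^{-1/2+\delta}) = O_\delta(m^{-1/2+\delta})$. (We may assume $\delta < 1/2$, since otherwise the bound is trivial; only that case is used in the last equality.)

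The reduction is essentially routine; the one point requiring care is that the cleaning must delete each offending row together with its matching column, so that $A_S$ stays symmetric and each deletion can be charged against at most $2t$ nonzero entries. This is exactly why the factor $3$ appears in the hypothesis: it guarantees that $t = \lceil m/3 \rceil$ still satisfies $t \ge \exp((\ln n)^{1/4})$, which is precisely the density threshold Theorem~\ref{thm:quadlo} demands of $A_S$.
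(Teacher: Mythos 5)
Your proof is correct and takes essentially the same route as the paper: extract a dense principal submatrix, apply Theorem~\ref{thm:quadlo} to it, and condition on the remaining coordinates. The only cosmetic difference is that the paper packages your cleaning step as the standard graph-theoretic fact that a graph of average degree at least $m-1$ contains a subgraph of minimum degree at least $(m-1)/2$, whereas you carry out the entry-counting explicitly (with threshold $m/3$ instead of $(m-1)/2$).
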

\begin{remark}
Again the $1/2$ is sharp, as can be seen from the form \\ $(x_1+ \dots +x_n)(x_1+\dots+x_m)$.  
\end{remark}

A weaker version of Theorem \ref{thm:quadlo} (with $\frac{1}{2}$ replaced by $\frac{1}{4}$)  was proved as a consequence of Theorem \ref{thm:BiLO} in \cite{CVrank}.  The improvement in the bound will come from a combination of Theorem \ref{thm:inversebi} and the use of a probabilistic variant of the Szemer\'{e}di-Trotter theorem.

We will prove Theorem \ref{thm:inversebi} in the next section, and the proof of Theorem \ref{thm:quadlo}  and Corollary \ref{cor:quadlo2} will come in the following section.  The remainder of the paper will be devoted to conjectured extensions of both results.

\section{The Proof of Theorem \ref{thm:inversebi}}
As in the proof of Theorem \ref{thm:BiLO}, we begin by dividing the vectors $y$ into two classes based on how many coordinates of $Ay$ are equal to 0.   

\begin{definition}
 A vector $y$ is \textbf{typical} if at least $r^{1-\frac{\epsilon}{4}}$ entries of $Ay$ are nonzero.  Otherwise it is atypical.
\end{definition}

Theorem \ref{thm:inversebi} is an immediate consequence of the following two lemmas.  
\begin{lemma}\label{lemma:typical}
 If $\P(x^TAy=f(y) \wedge y \textrm{ is typical }) \geq \frac{1}{2} r^{-1+\epsilon}$, then the conclusions of Theorem \ref{thm:inversebi} hold.  
\end{lemma}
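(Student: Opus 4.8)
The plan is to condition on $y$, apply the sharp inverse Littlewood--Offord theorem (Theorem \ref{thm:sharpILO}) to the resulting linear form in $x$, and then upgrade the structural information this produces about $Ay$ (for many $y$) into the rank-one submatrix of $A$ asserted by Theorem \ref{thm:inversebi}.

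\textbf{Averaging.} Since $\P_x(x^TAy=f(y))\le1$, the hypothesis forces the set $Y$ of typical vectors $y$ with $\P_x(x^TAy=f(y))\ge\tfrac14 r^{-1+\epsilon}$ to satisfy $\P(Y)\ge\tfrac14 r^{-1+\epsilon}$. As $r$ is at most the number of columns, $|Y|$ is exponentially large; counting $\pm1$-vectors in a subspace moreover shows $\Span Y$ has codimension only $O(\log r)$ in $\C^n$, so the column loss in Theorem \ref{thm:inversebi} will not originate here.

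\textbf{Inverse Littlewood--Offord for each good $y$.} Fix $y\in Y$ and write $x^TAy=\sum_i(Ay)_i x_i$. By typicality this is a linear form in the independent signs $x_i$ with $N\ge r^{1-\epsilon/4}$ nonzero coefficients, and it is concentrated at $f(y)$ with probability $\ge\tfrac14 r^{-1+\epsilon}\ge N^{-1/2-\epsilon'}$ for $\epsilon':=\tfrac12-\tfrac{\epsilon}{2}<\tfrac12$ (a routine check using $N\ge r^{1-\epsilon/4}$ and $r$ large). Applying Theorem \ref{thm:sharpILO} with a small auxiliary parameter $\alpha$ then yields a nonzero scalar $d_y$ and an exceptional set $E(y)$ of at most $N^{1-\alpha}$ coordinates with $(Ay)_i=d_y b_i$, $b_i\in\Z$, $|b_i|\le N^{\epsilon'+\alpha}\le r^{1/2+\alpha}$ for all $i\notin E(y)$. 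Equivalently, for every $y\in Y$, after discarding few coordinates all the ratios $(Ay)_i/(Ay)_{i'}=\langle a_i,y\rangle/\langle a_{i'},y\rangle$ are ratios of integers of size $r^{O(1)}$. (Under the lazy-walker hypothesis this step and the averaging step go through verbatim --- the form in $x$ is still a genuine $\pm1$ form --- using the lazy-walker version of Theorem \ref{thm:sharpILO} only where $y$'s distribution actually enters.)

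\textbf{Consolidation, the crux.} The model case is $A=uv^T$: there $Ay=(v^Ty)u$, so the ratios $(Ay)_i/(Ay)_{i'}=u_i/u_{i'}$ are \emph{independent of $y$}; more generally a large rank-one submatrix of $A$ makes most such ratios nearly $y$-independent once the few omitted columns are accounted for. The task is therefore to deduce, from the previous step holding for the exponentially large family $Y$, that most rows $a_i$ become pairwise parallel after deleting $O_\epsilon(r/\log^6 r)$ columns --- which is precisely a rank-one submatrix of the stated size. The plan is to pigeonhole $Y$ (dyadically in the direction of $d_y$, and combinatorially in $E(y)$ and in the bounded integer shape vector $Ay/d_y$ off $E(y)$) so as to isolate a sub-family that is uniformly structured and still spans a subspace of $\C^n$ of codimension $O_\epsilon(r/\log^6 r)$; on such a sub-family the ratio $\langle a_i,y\rangle/\langle a_{i'},y\rangle$ is pinned to a single constant $q_{ii'}$ for all members $y$, so $\langle a_i-q_{ii'}a_{i'},y\rangle\equiv0$ on a codimension-$O_\epsilon(r/\log^6 r)$ subspace, forcing $a_i$ and $a_{i'}$ to be parallel off $O_\epsilon(r/\log^6 r)$ columns; combining these over the retained rows produces the rank-one block. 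I expect essentially all the difficulty to lie here: the number of potential pigeonhole classes is enormous (the exceptional sets and shape vectors alone give super-exponentially many), so making the pigeonholing produce a sub-family that is simultaneously large, uniformly structured, and of small codimension --- with only the advertised polylogarithmic and $O_\epsilon(r/\log^6 r)$ losses --- is the real content, and will likely require iterating the argument (a single pass of Theorem \ref{thm:sharpILO} only bounds $|E(y)|$ by the polynomially large $N^{1-\alpha}$) together with a careful simultaneous analysis over $y\in Y$. The averaging and inverse-Littlewood--Offord steps, by contrast, are routine.
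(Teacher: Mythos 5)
Your opening move coincides with the paper's: apply Theorem \ref{thm:sharpILO} after conditioning on $y$, so that each good $y$ produces a short arithmetic progression containing most coordinates of $Ay$. But you yourself flag the consolidation step as containing ``essentially all the difficulty,'' and that is where the proposal has a genuine gap rather than merely an unexecuted but plausible plan. Pigeonholing over the exceptional set $E(y)$ and over the integer shape vector $Ay/d_y$ off $E(y)$ cannot work in the naive form you describe: there are $\binom{m}{r^{1-\alpha}}$ choices for $E(y)$ and $r^{O(m)}$ choices for a shape vector, so after pigeonholing the surviving subfamily is astronomically smaller than $Y$, and in particular can easily have very large codimension, destroying exactly the property (``still spans a subspace of codimension $O_\epsilon(r/\log^6 r)$'') that your argument needs. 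The phrase ``will likely require iterating the argument together with a careful simultaneous analysis'' does not resolve this; the obstruction is structural, not technical.

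The paper avoids this trap by never attempting to pigeonhole the global structure of $Ay$. Instead it works with $k$-tuples of \emph{rows} for $k\le\log^7 r$, defines an averaged \emph{commensurability} functional $\E_y\,Comm(v_1^Ty,\dots,v_k^Ty)$, and calls a tuple ``neighborly'' when this average is large. The content is then split in two: Lemma \ref{lemma:manyneighborly} shows almost every $k$-tuple is neighborly (this is where the inverse LO step enters, recast as Lemma \ref{lemma:reverseinverse} and an expectation bound on the quantity $g_y$), and Lemma \ref{lemma:neighborstruc} shows a neighborly tuple must consist of rows that are nearly scalar multiples of one another. The second lemma is the real engine and is proved via a bilinear anti-concentration estimate (Lemma \ref{lemma:doubleembed}) plus a number-theoretic bound on the number of $z$ for which $(az+b)/(cz+d)$ has small height (Lemma \ref{lemma:azplusb}). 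Finally, the aggregation from local (tuple-level) structure to a global rank-one submatrix is done not by pigeonholing on $y$ but by a double-count of neighborly $k_0$-tuples using the ``score'' function, which is what produces the $O(m/\log^6 r)$ and ultimately the $O(r/\log^6 r)$ exceptional-row bounds. None of this pairwise/tuple machinery, nor the number-theoretic input, appears in your proposal, and without it the consolidation step does not close.
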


\begin{lemma}\label{lemma:atypical}
If $\P(y \textrm{ is atypical }) \geq \frac{1}{2} r^{-1+\epsilon}$, then the conclusions of Theorem \ref{thm:inversebi} hold.
\end{lemma}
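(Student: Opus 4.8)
The plan is to show that frequent atypicality already pins down the structure of $A$: almost every row must be a scalar multiple of one fixed vector, off a small set of rows and columns, which is exactly the rank-one conclusion. Write $A_1,\dots,A_m\in\C^n$ for the rows, so $(Ay)_i=\langle A_i,y\rangle$ and ``$y$ atypical'' says all but fewer than $r^{1-\epsilon/4}$ of these inner products vanish.

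First I would extract many individually structured rows. Linearity of expectation gives $\sum_i\P(\langle A_i,y\rangle=0)\ge(m-r^{1-\epsilon/4})\cdot\tfrac12 r^{-1+\epsilon}$, so after deleting at most half the rows we keep a set $G$ with $\P(\langle A_i,y\rangle=0)=\Omega(r^{-1+\epsilon})$ for every $i\in G$. Theorem~\ref{thm:quantLO} forces each such row to have $O(r^{2-2\epsilon})$ nonzero entries, and because $\Omega(r^{-1+\epsilon})$ exceeds $(\text{number of nonzero entries})^{-1/2-\epsilon'}$ with $\epsilon'=\tfrac12-\tfrac\epsilon2$ as soon as a row has at least $r$ nonzero entries, Theorem~\ref{thm:sharpILO} (applied with this $\epsilon'$ and a small $\alpha=\alpha(\epsilon)$) tells me that each row in $G$ is, after deleting a subpolynomial fraction of its own entries, a scalar multiple of an integer vector with entries of size $r^{O(\epsilon)}$.

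Next I would turn ``each row structured'' into ``the rows are jointly collinear'' by using atypicality for many rows simultaneously. For an $\Omega(r^{-1+\epsilon})$ fraction of $y$, all but fewer than $r^{1-\epsilon/4}$ of the rows are annihilated at once; passing to the subspace $V\subseteq\C^n$ spanned by the rows and coordinatizing each row as a vector $\alpha_i$ in a fixed basis of $V$, each such $y$ produces a hyperplane of $V$ containing all but $<r^{1-\epsilon/4}$ of the $\alpha_i$. The normals of these hyperplanes must span all but $O(\log r)$ of the dimensions of $V$: otherwise every atypical $y$ lies in a fixed subspace of $\C^n$ of codimension exceeding $1+(1-\epsilon)\log_2 r$, which meets $\{1,-1\}^n$ in fewer than a $\tfrac12 r^{-1+\epsilon}$ fraction. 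Intersecting $\dim V-O(\log r)$ of these hyperplanes then places all but $O(r^{1-\epsilon/4}\dim V)$ of the rows in one subspace of dimension $O(\log r)$, i.e.\ reduces $A$ to rank $O(\log r)$ after deleting $O(r^{1-\epsilon/4}\dim V)$ rows.

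The hard part will be the last two issues. The previous step is only useful if $\dim V$ is already bounded by a small power of $r$ (the crude union bound over the $\binom{m}{<r^{1-\epsilon/4}}$ possible annihilation patterns only gives $\dim V\le\tilde O(r^{1-\epsilon/4})$, which is far too large), and even granting that, the residual rank $O(\log r)$ still has to be collapsed to $1$. I would handle both by feeding the integer structure from the first step into an incidence estimate of Szemer\'edi--Trotter type (the same tool driving the companion typical-case lemma): a family of many points lying on many very rich common hyperplanes, all with small-integer coordinates, can neither have large rank nor fail to collapse to a single line after deleting $O(r/\log^6 r)$ rows and columns. The genuine obstacle is quantitative bookkeeping throughout: since $r^{1-\epsilon/4}\gg1$ the naive union bound over annihilation patterns is hopelessly lossy, so every estimate must be run in the averaged ``almost every row killed at once'' form, and the exceptional entries produced by Theorem~\ref{thm:sharpILO} (a priori $O(r^{2-2\epsilon})$ per row) must be organized so their total stays within the $O(r/\log^6 r)$ deletion budget --- this is what fixes the exponent $6$.
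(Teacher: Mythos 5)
Your plan diverges from the paper's at the crucial step, and the divergence leaves a genuine gap that you yourself flag but do not resolve. The paper's proof mirrors the ``typical'' case almost verbatim: it defines \emph{friendly} $k$-tuples $(v_1,\dots,v_k)$ of rows by $\P(v_1^Ty=\dots=v_k^Ty=0)\ge\tfrac13 r^{-1+\epsilon}$, shows (Lemma~\ref{lemma:manyfriendly}) that almost all $k$-tuples are friendly when many $y$ are atypical, and then proves a structure lemma (Lemma~\ref{lemma:friendlystruc}) whose heart is that for each pair $(v_1,v_j)$ of a friendly tuple, the \emph{two-dimensional} Hal\'asz estimate (Theorem~\ref{thm:Halasz2d} with $d=2$) applied to the coefficient vectors $w_i=(v_1(i),v_j(i))\in\R^2$ forces all but $O(r^{1-\epsilon})$ of the $w_i$ to lie on a single line through the origin --- i.e.\ $v_j$ is a scalar multiple of $v_1$ off a set of $O(r^{1-\epsilon})$ coordinates. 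That pairwise near-collinearity is exactly what the subsequent score/counting argument needs to extract a rank-one submatrix within the stated $O(r/\log^6 r)$ budget. It is obtained in one step, for free, from atypicality of a single $y$ combined with 2D Hal\'asz.

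Your sketch replaces this with two weaker ingredients and never closes the gap between them. Applying Theorem~\ref{thm:sharpILO} row-by-row gives information about individual rows (and note the quantitative claim that the resulting integer coefficients have size $r^{O(\epsilon)}$ is not right in general: if a row has only $N\approx r$ nonzero entries, the $\epsilon'$ you must feed Theorem~\ref{thm:sharpILO} to match the concentration $r^{-1+\epsilon}$ is roughly $\tfrac12-\tfrac\epsilon2$, so the bound on $|b_i|$ is $N^{\epsilon'+\alpha}\approx r^{1/2-\epsilon+\alpha}$, a polynomially large quantity, not $r^{O(\epsilon)}$). Your dimension argument via Odlyzko-type counting then controls $\dim V$ only up to $O(\log r)$, which is \emph{not} rank one; and the promised Szemer\'edi--Trotter incidence step that would collapse this residual rank and simultaneously repair the lossy union bound over $\binom{m}{<r^{1-\epsilon/4}}$ annihilation patterns is left entirely as a placeholder. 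That step is precisely where the work lies, and the paper bypasses it by using the 2D Hal\'asz theorem to go directly from joint vanishing to pairwise collinearity, rather than from joint vanishing to a low-dimensional ambient space. Without a substitute for that mechanism, the sketch does not yield the required rank-one submatrix of size $(m-O(r/\log^6 r))\times(n-O(r/\log^6 r))$.
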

\begin{remark}
If we consider a form which factors perfectly as $x^TAy=g(x)h(y)$, then the hypothesis of Lemma \ref{lemma:typical} corresponds to the case where $g(x)$ is very structured (concentrated on a single value with probability close to $r^{-1/2}$), while that of Lemma \ref{lemma:typical} corresponds with the same property holding for $h(y)$.   
\end{remark}

We will examine each lemma in turn.  
\subsection{The proof of Lemma \ref{lemma:typical}}
We will assume throughout this section that $A$ is a matrix such that 
\begin{equation*}
 \P(x^TAy=f(y) \wedge y \textrm{ is typical }) \geq \frac{1}{2} r^{-1+\epsilon}
\end{equation*}

It follows from Lemma \ref{thm:quantLO} that for any $y_0$ which is typical we have 
\begin{equation} \label{eqn:LOtypical}
 \P_x(x^TAy_0=f(y_0)) \leq r^{-\frac{1}{2}+\frac{\epsilon}{8}}.  
\end{equation}

Our argument will go roughly as follows: Under our assumptions, we know that there must be many typical $y_0$ for which \eqref{eqn:LOtypical} is not too far from equality.  By Theorem \ref{thm:sharpILO}, we know that for such $y_0$ the coordinates of $Ay_0$ must be very highly structured, in the sense that all of them except for a small exceptional set must lie in not too long an arithmetic progression.  

The difficulty is that the exceptional sets in Theorem \ref{thm:sharpILO} may be different for different $y_0$. However, there will still be many ``small'' (of size much smaller than $n$) sets of coordinates which will lie entirely outside the exceptional set for most $y$.  We will show that such sets correspond to small collections of rows in $A$ which are very close to being multiples of each other, and then aggregate those collections to find our $A'$.  We now turn to the details.    

We will make use of the following (truncated) quantitative description of how embeddable a small group of real numbers is in a short arithmetic progression, which can be thought of as a variant of the essential LCD used in \cite{RV}.  

\begin{definition}
The \textbf{commensurability} of a $k-$tuple $(a_1, \dots a_k)$ of real numbers is defined by 
\begin{equation*}
 Comm(a_1, \dots a_k)=\max\{r^{-\frac{1}{2}+\frac{\epsilon}{4}}, \frac{1}{R}\},
\end{equation*}
where $R$ is the length of the shortest arithmetic progression containing $0$ and every $a_i$ simultaneously.   
\end{definition}

For example, if $a \leq b$ are positive integers, then, up to the truncation at $r^{-\frac{1}{2}+\frac{\epsilon}{4}}$, $Comm(a,b)=\frac{b}{GCD(a,b)}$.  Also, if $(a_1, a_2, \dots a_k)$ are all drawn from an arithmetic progression of length $q$ containing 0, we are trivially guaranteed that $Comm(a_1, \dots , a_k)$ is at least $\frac{1}{q}$.  We next characterize the ``small sets'' of coordinates mentioned above in terms of this commensurability.   

\begin{definition}
A $k-$tuple $(v_1, v_2, \dots v_k)$ of vectors is \textbf{neighborly} if 
\begin{equation*}
 \E_y Comm(v_1^T y, v_2^T y, \dots v_k^T y) \geq \frac{1}{6} r^{-\frac{1}{2}+\frac{5 \epsilon}{8}}
\end{equation*} 
\end{definition}

Fix $k_0:=\log^7 r$.  Our next lemma states that the number of neighborly tuples is quite large:   
\begin{lemma} \label{lemma:manyneighborly}
 For $k \leq k_0$, there are at least $m^k(1-\frac{r^{1-\frac{\epsilon}{8}}}{m})$ neighborly $k-$tuples such that each $v_i^T$ is a row of $A$.  
\end{lemma}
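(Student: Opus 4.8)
The plan is to show that ``most'' $k$-tuples of rows are neighborly by contradiction: if too many tuples fail to be neighborly, then the hypothesis $\P(x^TAy = f(y) \wedge y \textrm{ typical}) \geq \frac12 r^{-1+\epsilon}$ cannot hold. The bridge between commensurability and concentration is Theorem \ref{thm:sharpILO} (applied with a suitable choice of $\alpha$ in terms of $\epsilon$): if $y_0$ is typical and $\P_x(x^TAy_0 = f(y_0))$ is within a polylogarithmic factor of the extremal bound $r^{-1/2}$, then all but $n^{1-\alpha}$ of the entries of $Ay_0$ lie in a progression of length at most $n^{\text{(small)}}$ — i.e., they are ``structured''. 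First I would use a simple averaging (Markov / first-moment) argument on the assumed concentration: since the total probability mass $\frac12 r^{-1+\epsilon}$ is spread over typical $y$, and each such $y$ contributes at most $r^{-1/2+\epsilon/8}$ by \eqref{eqn:LOtypical}, there must be a set $\mathcal{T}$ of typical $y$ with $\P(y \in \mathcal{T})$ of order $r^{-1/2+\Omega(\epsilon)}$ on which $\P_x(x^TAy_0 = f(y_0))$ is at least, say, $\frac14 r^{-1/2-\epsilon/8}$ — large enough to invoke Theorem \ref{thm:sharpILO} with $\epsilon' \approx \epsilon/8$.

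Next I would fix a $k$-tuple of rows $(v_1,\dots,v_k)$ with $k \leq k_0 = \log^7 r$ and estimate $\E_y Comm(v_1^Ty,\dots,v_k^Ty)$ from below. Condition on $y \in \mathcal{T}$. For such $y$, Theorem \ref{thm:sharpILO} puts all but $n^{1-\alpha}$ coordinates of $Ay$ into a progression of length $R(y) \leq d(y)\cdot n^{\epsilon'+\alpha}$ for some scale $d(y)$. Call the tuple \emph{bad for $y$} if one of the (at most $k$) coordinates $v_i^Ty$ is among the exceptional ones, or if the common progression is too long. A union bound over the $k \leq \log^7 r$ coordinates shows that the fraction of rows that are exceptional for a given $y \in \mathcal{T}$ is at most $n^{1-\alpha}/m \leq$ (something like) $r^{-1+\epsilon/8}$ once $m, r$ are large and $\alpha$ is chosen appropriately relative to the hypotheses $r \leq m$ and ``every row has $\geq r$ nonzero entries''; hence for all but a $r^{-1+\epsilon/8}$-fraction of rows $v_i$, the probability that $v_i$ is exceptional for a random $y \in \mathcal{T}$ is small, and by another averaging / union bound the number of $k$-tuples that are bad for a positive-probability chunk of $\mathcal{T}$ is at most $m^k \cdot \frac{r^{1-\epsilon/8}}{m}$. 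For a tuple that is good for a chunk $\mathcal{T}' \subseteq \mathcal{T}$ with $\P(y \in \mathcal{T}') \gtrsim r^{-1/2+\Omega(\epsilon)}$, every $y \in \mathcal{T}'$ forces $Comm(v_1^Ty,\dots,v_k^Ty) \geq 1/R(y)$; bounding $R(y)$ by the progression length and combining with $\P(\mathcal{T}')$ gives $\E_y Comm(\cdots) \geq \P(y \in \mathcal{T}') \cdot \min_{y}\frac{1}{R(y)} \geq \frac16 r^{-1/2+5\epsilon/8}$, which is exactly the neighborliness threshold — here the $r^{-1/2+\epsilon/4}$ truncation in the definition of $Comm$ is what keeps the bound from being killed by the rare $y$ with very long progressions.

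The main obstacle I anticipate is the bookkeeping of exponents: one must choose $\alpha$ (and the various $\epsilon/8$, $\epsilon/4$, $5\epsilon/8$ slacks) so that simultaneously (i) $n^{1-\alpha}/m$ beats $r^{-1+\epsilon/8}$, using only $r \leq m \leq n$ and the row-sparsity hypothesis to relate $n$ to $r$; (ii) the progression length $n^{\epsilon'+\alpha}$ is short enough that $1/R(y)$ stays above $r^{-1/2+5\epsilon/8}$ up to truncation; and (iii) the chunk $\mathcal{T}'$ retains probability $\gtrsim r^{-1/2+\Omega(\epsilon)}$ after all the union bounds. A secondary subtlety is that Theorem \ref{thm:sharpILO} gives a \emph{single} scale $d(y)$ for each $y$ but different $y$ may have different scales; this is fine here because neighborliness is defined per-tuple as an expectation over $y$, so we never need to reconcile scales across different $y$ — we only need, for each good tuple, one decent chunk of $y$'s on which the coordinates land in \emph{some} short progression. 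Finally, I would note the hypothesis is stated with ``at least $m^k(1 - r^{1-\epsilon/8}/m)$'', so the count of bad tuples we need is $\leq m^{k-1} r^{1-\epsilon/8}$; since each bad tuple is bad because one of its $k$ coordinates lies in a ``frequently exceptional'' set of $\leq r^{1-\epsilon/8}$ rows, and $k \leq \log^7 r$ is absorbed into the $O(\cdot)$, the count works out provided the frequently-exceptional set truly has size $O(r^{1-\epsilon/8})$ — establishing that size bound (via Fubini on the pair $(v_i, y)$) is the one genuinely quantitative computation in the argument.
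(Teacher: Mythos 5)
Your high-level strategy --- invoke Theorem~\ref{thm:sharpILO} to put the coordinates of $Ay$ into a short progression, then count $k$-tuples according to whether their coordinates hit the exceptional set, finishing with a Fubini/first-moment argument over $(v_i,y)$ --- is the right one and is the same skeleton the paper uses. The gap is in how the concentration hypothesis is converted into a usable per-$y$ quantity. You propose to carve out a single level set $\mathcal{T}$ of typical $y$ on which $\P_x(x^TAy=f(y))\geq\tfrac14 r^{-1/2-\epsilon/8}$ and then lower-bound the expected commensurability of a good tuple by $\P(\mathcal{T}')\cdot\min_{y\in\mathcal{T}'}1/R(y)$. This does not work as stated: for $\epsilon<4/9$ one has $r^{-1/2-\epsilon/8}\gg r^{-1+\epsilon}$, so the entire hypothesized mass $\tfrac12 r^{-1+\epsilon}$ may be supported on $y$'s whose conditional probability is close to $r^{-1+\epsilon}$, in which case your $\mathcal{T}$ is empty and $\P(\mathcal{T})$ is nowhere near $r^{-1/2+\Omega(\epsilon)}$. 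Even after choosing the cutoff correctly (say via a dyadic decomposition), a flat bound $\P(\mathcal{T}')\cdot\min 1/R(y)$ is wasteful because $R(y)$ varies across the chunk; what you actually need to control is the integral of $\P(y)/R(y)$ over typical $y$, not a product of a probability and a worst-case reciprocal length.

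The paper sidesteps both issues by working directly with a weighted quantity. For every typical $y$ it defines a scalar $g_y$, equal (after truncation below at $r^{-1/2+\epsilon/4}$) to the reciprocal length of a shortest progression containing $0$ and at least $m-r^{1-\epsilon/4}$ entries of $Ay$, together with the index set $D_y$ of coordinates landing in that progression. Lemma~\ref{lemma:reverseinverse} (a ``forward'' reading of Theorem~\ref{thm:sharpILO}) shows $\P(x^TAy=f(y)\mid y)\leq r^{-1/2+3\epsilon/8}g_y$, so the concentration hypothesis immediately integrates to $\E_y g_y\geq r^{-1/2+5\epsilon/8}$ with no level sets and no loss. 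A tuple is neighborly as soon as $\E_y\bigl[g_y\,\chi(\{a_1,\dots,a_k\}\subseteq D_y)\bigr]\geq\tfrac13\E_y g_y$, since $Comm\geq g_y$ whenever all the chosen coordinates lie in $D_y$; and the count of such tuples is a one-line reverse-Markov on $\E_{a_1,\dots,a_k}\E_y[g_y\chi]$, using only the deterministic bound $|D_y|\geq m-r^{1-\epsilon/4}$. If you want to repair your sketch, the fix is to replace the fixed cutoff with this per-$y$ weight $g_y$ (equivalently, integrate $\P(\cdots\mid y)\cdot r^{1/2-3\epsilon/8}$ against the law of $y$) rather than isolating one chunk $\mathcal{T}$ and treating $R(y)$ as constant on it.
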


The proof of this lemma will be deferred to a later section.  Our next goal will be translate the neighborliness of a tuple into structural information about the corresponding rows of $A$.  One natural way in which a tuple can be neighborly is if the rows in $A$ are themselves small multiples of each other, in which case the corresponding coordinates of $Ay$ will \textit{always} be small multiples of each other.  Our next lemma states that every neighborly tuple is in some sense close to this example.  
\begin{lemma} \label{lemma:neighborstruc}
 Let $k \leq k_0$, and let $(v_1, v_2, \dots v_k)$ be neighborly.  Then there are unique real numbers $d_2, \dots d_k$ and sets $S_2, \dots S_k$ of coordinates such that
\begin{itemize}
 \item {For each $j$, $v_1=d_j v_j$ on all coordinates outside of $S_j$}
 \item {$\prod_{j=2}^k |S_j \backslash \bigcup_{i=2}^{j-1} S_i|_1 =O_{\epsilon} (r^{1-\frac{5\epsilon}{4}})$,
where $|S|_1=1$ if $S$ is empty and $|S|_1=\min\{|S|,4\}$ otherwise.}
\end{itemize}
\end{lemma}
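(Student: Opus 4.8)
The plan is to first turn the neighborliness hypothesis into a concrete structural statement about the random $y$, and then extract from it the near-proportionality of the rows. Throughout, a useful preliminary observation is that every sub-tuple of a neighborly tuple is again neighborly, since passing to a sub-tuple can only shorten the arithmetic progression needed and hence only increase $Comm$; in particular each pair $(v_1,v_j)$ is neighborly.

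\textbf{Step 1 (averaging).} Since $Comm \le 1$ always while $Comm \ge r^{-1/2+\epsilon/4}$, and $r^{-1/2+\epsilon/4}=o(r^{-1/2+5\epsilon/8})$, splitting the expectation $\E_y\, Comm(v_1^T y,\dots,v_k^T y)\ge \tfrac16 r^{-1/2+5\epsilon/8}$ over the event $Comm\ge \tfrac1{12}r^{-1/2+5\epsilon/8}$ and its complement shows that this event has probability at least $\tfrac1{13}r^{-1/2+5\epsilon/8}$ for $r$ large. On that event the truncation is not the maximum, so there is an arithmetic progression of length $L:=\lceil 12 r^{1/2-5\epsilon/8}\rceil$ containing $0$ and all of $v_1^T y,\dots,v_k^T y$. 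Call the set of such $y$ by $Y$; so $\P_y(y\in Y)\ge \tfrac1{13}r^{-1/2+5\epsilon/8}$ and for $y\in Y$ every $v_i^T y$ is an integer multiple, with multiplier of absolute value $\le L$, of a common step $\delta(y)$.

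\textbf{Step 2 (canonical $d_j, S_j$ and uniqueness).} For each $j$, let $d_j$ be the complex number maximizing the number of coordinates $i$ with $(v_1)_i=d_j (v_j)_i$ (this is meaningful since every row of $A$, hence $v_j$, has at least $r$ nonzero entries), and set $S_j:=\{i:(v_1)_i\ne d_j (v_j)_i\}$, which is then determined by $d_j$. Uniqueness of $d_j$ follows once Step 3 gives $|S_j|<r/2$: if $d_j'$ also satisfied $v_1=d_j' v_j$ off a set of size $<r/2$, then $v_j=0$ off a set of size $<r$, contradicting the hypothesis that every row of $A$ has $\ge r$ nonzero entries. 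So the entire content of the lemma is the product bound.

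\textbf{Step 3 (the product bound).} Put $T_j:=S_j\setminus\bigcup_{i=2}^{j-1}S_i$; the $T_j$ are pairwise disjoint and we must bound $\prod_{j=2}^k |T_j|_1$. Since each factor is in $\{1,2,3,4\}$, and most factors are forced to be $1$ precisely when $T_j=\varnothing$, the real claim is that only $O_\epsilon(\log r)$ of the rows $v_j$ introduce a \emph{fresh} disagreement coordinate with $v_1$, and that these contribute at most $O_\epsilon(r^{1-5\epsilon/4})$ in total. Suppose $t$ rows $v_{\ell_1},\dots,v_{\ell_t}$ ($\ell_1<\cdots<\ell_t$) have $T_{\ell_s}\ne\varnothing$; pick $F_s\subseteq T_{\ell_s}$ with $1\le|F_s|=|T_{\ell_s}|_1\le 4$ and set $F:=\bigcup_s F_s$. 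By definition of $T_{\ell_s}$, on $F_s$ the pair $(v_1,v_{\ell_s})$ fails to be proportional with ratio $d_{\ell_s}$, while on every earlier block $F_{s'}$ (with $\ell_{s'}<\ell_s$) it \emph{is} proportional with ratio $d_{\ell_{s'}}$; this triangular pattern makes the $t$ vectors $(v_1-d_{\ell_s}v_{\ell_s})|_F$ ``genuinely independent'' in that each is nonzero on its own block $F_s$ and is controlled by the earlier ratios on the earlier blocks. Now condition $y$ on the coordinates outside $F$: each $v_i^T y$ becomes an affine-linear form in $z:=y|_F$, and $y\in Y$ forces, for every $s$, the difference $(v_1-d_{\ell_s}v_{\ell_s})^T z$ plus a fixed constant to lie in a set of $O(L)$ multiples of $\delta(y)$. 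Because these difference-forms live on disjoint, nonempty blocks $F_s$, an iterated application of Theorem \ref{thm:quantLO} (peeling off one block at a time, with the two-dimensional version Theorem \ref{thm:Halasz2d} available as an alternative for handling $(v_1^T z, v_{\ell_s}^T z)$ jointly) bounds the probability of this simultaneous confinement by $L^{O(t)}$ times a product of gains from the $t$ blocks; with careful bookkeeping of the $L$-dependence this is shown to be $\le \rho^{\,t}\cdot r^{1-5\epsilon/4}$ for a fixed $\rho<1$ once $t$ is a large enough multiple of $\log r$. Comparing with the lower bound $\P_y(y\in Y)\ge r^{-1}$ then yields $t=O_\epsilon(\log r)$, and the same length-tracking converts this into $\prod_j |T_j|_1=O_\epsilon(r^{1-5\epsilon/4})$.

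\textbf{Main obstacle.} The hard part is Step 3: converting the purely combinatorial triangular disagreement pattern on the blocks $F_s$ into a genuine anticoncentration estimate that decays geometrically in the number $t$ of fresh-disagreeing rows, \emph{and} tracking the arithmetic-progression length $L=O(r^{1/2-5\epsilon/8})$ quantitatively enough to land exactly on the exponent $r^{1-5\epsilon/4}$ rather than a lossier power of $r$ (a crude pigeonhole over the $L^2$ possible rational slopes is far too weak). The degenerate cases in which $(v_1)_i=0$ or $d_{\ell_s}=0$ on part of a block $F_s$ are the source of the truncation at size $4$ and of the use of $|\cdot|_1$ in place of $|\cdot|$, and must be absorbed into this estimate.
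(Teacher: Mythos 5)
Your Step~3 has a genuine gap that the paper's proof is specifically built to avoid, and it is not a matter of ``careful bookkeeping.''

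After Step~1 you know only that with probability $\gtrsim r^{-1/2+5\epsilon/8}$ all the $v_i^Ty$ lie in \emph{some} arithmetic progression of length $L\approx r^{1/2-5\epsilon/8}$ through $0$, with step $\delta(y)$ depending on $y$. This is not a fixed linear equation, it is membership in a union of $\Theta(L^{k-1})$ fibers, and the step itself is a function of the coordinates you have already exposed. When you ``peel off one block $F_s$ at a time,'' the anti-concentration gain from a block with $|F_s|\le 4$ Bernoulli variables is a bounded constant ($\ge 1/2$), while you must pay a union bound over $\Theta(L)$ possible multiples of $\delta(y)$ for that block. The resulting $L^{O(t)}\cdot(\text{const})^t$ estimate diverges rather than converges, because $L$ is a positive power of $r$. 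There is no choice of $\rho<1$ that makes $L^{O(t)}\rho^t\le r^{1-5\epsilon/4}$; the loss swamps the gain. The paper escapes this precisely by not running the exposure argument against the varying progression. It first proves (via the substantial Lemma~\ref{lemma:doubleembed}, which itself rests on the Hal\'asz moment bound and the Szemer\'edi--Trotter-type Lemma~\ref{lemma:azplusb}) that the total contribution to $\E_y\,Comm^*$ from \emph{non-degenerate} integer tuples $(l_1,\dots,l_k)$ is $o(r^{-1/2+5\epsilon/8})$. Only then can it conclude that the mass concentrates on a single degenerate tuple, giving the single \emph{fixed} system $v_1^Ty/l_1=\dots=v_k^Ty/l_k$ of probability $\ge\frac1{14}r^{-1/2+5\epsilon/8}$, to which the triangular block-exposure and Theorem~\ref{thm:quantLO} apply cleanly and with no $L$-factor at all. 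Your proposal omits this dichotomy entirely; the reduction from progression-membership to a fixed equation is the crux of the argument.

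Two further issues. First, your canonical definition of $d_j$ (the multiplier maximizing coordinatewise agreement) is logically disconnected from the AP structure of the $v_i^Ty$: there is no reason that $(v_1-d_jv_j)^Ty$ is constrained by $y\in Y$, whereas in the paper $d_j=l_1/l_j$ is manufactured from the degenerate tuple so that this difference is exactly $0$ on the dominant event. The two notions coincide only \emph{after} one knows a degenerate tuple exists, which is what remains to be proved. Second, your uniqueness argument in Step~2 claims ``Step~3 gives $|S_j|<r/2$,'' but the product bound with the $|\cdot|_1$ truncation at $4$ gives no bound on any individual $|S_j|$ (a single $T_j$ of size $r/2$ contributes only a factor $4$). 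In the paper the bound $|S_j|\le r/5$ comes for free from the definition of degeneracy, not from the product bound, and it is that bound which yields uniqueness.
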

What's important here is that not only does each row differ only in a few places from being a multiple of the first row in the tuple (the exceptional sets are of size  $o(r)$), but also that the exceptions will tend to occur in the same columns.  This latter fact will help keep the exceptional sets from growing too quickly when we attempt to examine many neighborly tuples at once.  Again we will defer the proof of this lemma to a later section.  

 Together, the above two lemmas state that the matrix $A$ must have a great deal of local structure, in the sense that many not-too-large collections of rows are very close to being multiples of each other.  Our goal will now be to combine these into a single global structure.  Using Lemmas \ref{lemma:manyneighborly} and \ref{lemma:neighborstruc}, we will be able to prove the following weakened version of Theorem \ref{thm:inversebi}, which allows the number of exceptional rows to be proportional to $m$ instead of $r$.  
\begin{lemma} \label{lemma:largesubmatrix}
 If $A$ satisfies the hypotheses of Theorem \ref{thm:inversebi}, then $A$ contains a rank one submatrix of size $(m-O_\epsilon(\frac{m}{\log^6 r})) \times (n-O_\epsilon(r^{1-\frac{5 \epsilon}{4}}))$.  
\end{lemma}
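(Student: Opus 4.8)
The plan is to single out one row of $A$ as a ``hub'', show that almost every other row agrees with a nonzero scalar multiple of the hub outside a small and essentially common set of columns, and then delete those columns together with the few exceptional rows. To choose the hub I would apply Lemma \ref{lemma:manyneighborly} with $k=k_0=\log^7 r$: since at most $m^{k-1}r^{1-\epsilon/8}$ of the $m^k$ ordered $k$-tuples of rows fail to be neighborly, averaging over the first coordinate produces a row $v_1$ for which all but at most an $r^{1-\epsilon/8}/m\le r^{-\epsilon/8}$ fraction of the $(k-1)$-tuples $(v_2,\dots,v_k)$ make $(v_1,v_2,\dots,v_k)$ neighborly. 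A second averaging, over the second coordinate, then shows that for all but $O(r^{1-\epsilon/8})$ of the rows $v$ --- call these \emph{aligned} --- at least half of the random completions $(v_1,v,v_3,\dots,v_k)$, with $v_3,\dots,v_k$ uniform rows, are neighborly; since $m\ge r$ we have $r^{1-\epsilon/8}\log^6 r=o(m)$, so discarding the non-aligned rows stays within the stated row budget.

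Next I would extract local structure for each aligned row $v$ by applying Lemma \ref{lemma:neighborstruc} to any neighborly completion, obtaining a scalar $d$ and a column set $S$ with $v_1=d\,v$ off $S$. Two such exceptional sets can be merged while remaining of size $o(r)$, so $d$ is pinned down on some coordinate where $v$ is nonzero (there are at least $r$ of these); together with the uniqueness clause of Lemma \ref{lemma:neighborstruc} this shows that the pair $(d(v),S(v))$ is well defined, independent of the completion used. Furthermore $d(v)\neq 0$, since otherwise $v_1$ would be supported on the $o(r)$-element set $S(v)$, contradicting the assumption that $v_1$ has at least $r$ nonzero entries. Thus each aligned row is, off $S(v)$, a genuine nonzero multiple of $v_1$.

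The crucial step is to show that the union $E:=\bigcup_{v\ \mathrm{aligned}}S(v)$ is covered, apart from the exceptional sets of $O_\epsilon(m/\log^6 r)$ further rows, by some set $B$ of at most $O_\epsilon(r^{1-5\epsilon/4})$ columns. The mechanism is the second conclusion of Lemma \ref{lemma:neighborstruc}: along a neighborly tuple the incremental exceptional sets have sizes whose truncated product is only $O_\epsilon(r^{1-5\epsilon/4})$, so at most $O(\log r)$ of them can contribute two or more new columns. Feeding the hub together with $k_0-1$ uniformly random rows into this bound, and using Lemma \ref{lemma:manyneighborly} to guarantee the tuple is neighborly with probability $1-o(1)$, one bounds the number of columns lying in $S(v)$ for a non-negligible fraction of the aligned rows: if there were too many such ``popular'' columns, a random length-$k_0$ tuple would, with positive probability, accumulate geometrically many spread-out new columns as it is built up and would overrun the polynomial bound $r^{1-5\epsilon/4}$, which is impossible --- and here it is essential that $k_0=\log^7 r$ is super-logarithmic. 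The remaining aligned rows, whose exceptional sets meet only rarely occurring columns, are then shown to be few and are discarded, the per-row bound $|S(v)|=o(r)$ from Lemma \ref{lemma:neighborstruc} keeping the counting under control.

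Finally, letting $T$ be the set of rows that survive all the deletions and $B$ the column set just produced, one has $|T|\ge m-O_\epsilon(m/\log^6 r)$, $|B|=O_\epsilon(r^{1-5\epsilon/4})$, and $u=d(u)v_1$ on the columns outside $B$ for every $u\in T$, with $d(u)\neq 0$. Hence the submatrix of $A$ on rows $T$ and columns outside $B$ has all of its rows proportional to the restriction of $v_1$ to those columns, which is nonzero because $v_1$ has at least $r$ nonzero entries while $|B|=o(r)$; this submatrix therefore has rank exactly one and dimensions $(m-O_\epsilon(m/\log^6 r))\times(n-O_\epsilon(r^{1-5\epsilon/4}))$, as required. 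I expect the third step to be the main obstacle: converting the per-tuple, truncated-product estimate of Lemma \ref{lemma:neighborstruc} into a global bound on $E$ within the tight $O_\epsilon(r^{1-5\epsilon/4})$ column budget is delicate precisely because the truncation makes singleton new columns ``free'', so one must choose the tuples fed into Lemma \ref{lemma:neighborstruc} with care and combine the incremental bound with the per-row size bound and the near-certainty of neighborliness.
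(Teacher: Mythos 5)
Your overall architecture matches the paper's: start from Lemmas \ref{lemma:manyneighborly} and \ref{lemma:neighborstruc}, extract for each row a scalar $d(v)$ and small exceptional set $S(v)$, and arrange that after deleting a small fraction of rows and a small set of columns all surviving rows become genuine multiples of a common row. The steps you label one and two are sound (the hub-selection averaging works, $d(v)$ is pinned down, $d(v)\neq 0$ since otherwise $v_1$ would vanish off $S(v)$). The problem, exactly where you flag it, is step three, and what you write there is a heuristic rather than a proof.

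The difficulty is that Lemma \ref{lemma:neighborstruc} gives you a \emph{product} bound, with incremental sets truncated at $4$, so a single neighborly $k_0$-tuple says almost nothing directly about how many columns appear across the $S(v)$ of many different aligned rows: singleton new columns are indeed ``free,'' and ``if there were too many popular columns a random tuple would accumulate geometrically many'' is not a statement you can push through as written, because the per-step gain is capped at a factor $4$ and the columns accumulated could cluster rather than spread. What you need, and what the paper supplies, is a specific double-counting device. The paper defines a \emph{score} for a neighborly tuple $(v_1,\dots,v_k)$ as the number of indices $j$ at which $S_j$ introduces a column not already in $\bigcup_{i<j}S_i$. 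The product bound of Lemma \ref{lemma:neighborstruc} forces the score to be at most $\log r-1$ along any neighborly tuple. Letting $b$ be the maximum, over prefixes $V$ of length $<k_0$, of the number $|S(V)|$ of rows $v$ for which appending $v$ leaves the score unchanged, one counts neighborly $k_0$-tuples two ways: Lemma \ref{lemma:manyneighborly} gives a lower bound of $m^{k_0}(1-r^{1-\epsilon/8}/m)$, while the score constraint gives an upper bound of $\binom{k_0-1}{\log r-1}m^{\log r}b^{k_0-\log r}$. Comparing and taking logarithms yields $b\ge m-O(m/\log^6 r)$, and then the column set is just $\bigcup_{j\le k}S_j$ for that optimal prefix $V$, whose size is $O_\epsilon(r^{1-5\epsilon/4})$ directly from Lemma \ref{lemma:neighborstruc}. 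Note a further structural point your sketch misses: the paper does \emph{not} bound $\bigcup_{v\ \mathrm{aligned}}S(v)$ (which could in principle be large), nor even that union over a pruned set of rows defined row-by-row; it exhibits one prefix $V$ and the single column set $\bigcup_j S_j(V)$ attached to it, with $|S(V)|$ of the rows automatically confined to that set. So the thing to prove is an existential (``some prefix is good''), which the double count delivers, rather than the near-universal statement about popular columns that your step three aims at. Without the score mechanism, or a comparably sharp replacement for it, I don't see how to close the gap you yourself identified as the main obstacle.
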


In the following sections we will first prove Lemma \ref{lemma:largesubmatrix} assuming the truth of Lemmas \ref{lemma:manyneighborly} and \ref{lemma:neighborstruc}, then leverage that result into the stronger bound required by Theorem \ref{thm:inversebi}.  We will finish the proof of Lemma \ref{lemma:typical} by proving Lemmas \ref{lemma:manyneighborly} and \ref{lemma:neighborstruc}.

\subsection{The proof of Lemma \ref{lemma:largesubmatrix} assuming lemmas \ref{lemma:manyneighborly} and \ref{lemma:neighborstruc}}
Motivated by the conclusion of Lemma \ref{lemma:neighborstruc}, we make the following definition:
\begin{definition}
 Let $V=\{v_1, \dots v_k\}$ be an (ordered) neighborly $k-$tuple.  The \textbf{score} of $V$ is given by 
\begin{equation*}
 Score(V)=\sum_{j=2}^k \chi(S_j \notin \bigcup_{i=2}^{j-1} S_i),
\end{equation*}
 where the $S_j$ are as in Lemma \ref{lemma:neighborstruc} and $\chi(E)$ is the indicator function of the event $E$.    
\end{definition}
The score is well defined, since the $d_j$ and $S_j$ are unique in that lemma.  It also has the following useful properties
\begin{itemize}
 \item{$Score(v_1 \dots v_k) \leq Score(v_1 \dots v_{k+1})$.  Equality holds iff $S_{k+1} \subseteq \bigcup_{i=1}^k S_i$.}
 \item{If $(v_1, \dots v_k)$ is neighborly, then there can be at most $\\ \log_4 (O_{\epsilon}( r^{1-\frac{5\epsilon}{4}})) < \log r-1$ different $j$ for which the score increases from $(v_1, \dots v_j)$ to $(v_1, \dots v_{j+1})$.}   
\end{itemize}

For a given (ordered) neighborly $k$-tuple $V=(v_1, \dots v_k)$ of rows of $A$ with $k<k_0$, let $S(V)$ be the collection of all rows $v$ of $A$ such that $(v_1, \dots v_k, v)$ is a neighborly tuple with the same score as $V$.  Note that for any $V$, all of the rows in $S(V)$ are multiples of $v_1$ (and thus of each other) except in the coordinates  where a prior $d_j v_j$ differed from $v_1$, and the number of such coordinates is at most
\begin{equation*}
 |\bigcup_{j=2}^k S_j| = \sum_{j=2}^k |S_j \backslash \bigcup_{i=2}^{j-1} S_i| =O_{\epsilon}( r^{1-\frac{5\epsilon}{4}})
\end{equation*}
 by Lemma \ref{lemma:neighborstruc}.  It follows that we have a rank one submatrix of dimensions $\\ |S(V)| \times n-O_\epsilon(r^{1-\frac{5\epsilon}{4}})$.  It therefore suffices to show some $S(V)$ is large.  Let $b$ be the maximal value of $|S(V)|$ over all neighborly tuples of size at most $k_0-1$.  We count the number of neighborly $k_0-$tuples in two ways.

\textbf{Method 1:} By Lemma \ref{lemma:manyneighborly}, there are at least $m^{k_0}(1-\frac{r^{1-\frac{\epsilon}{8}}}{m})$ such tuples.

\textbf{Method 2:} We can bound the number of such tuples by first choosing a set $J$ of size $\log r-1$ of places in which the score is allowed to increase, then restricting our attention only to those tuples whose scores increase only on $J$.  For each $j$ where the score fails to increase from $(v_1, \dots v_j)$ to $(v_1 \dots v_{j+1})$, there are at most $b$ choices for $v_{j+1}$.  For each other $j$, there are at most $m$ choices.  It follows that the number of tuples is at most 
\begin{equation*}
 \binom{k_0-1}{\log r-1} m^{\log r} b^{k_0 - \log r} \leq k_0^{\log r} m^{\log r}b^{k_0-\log r}.
\end{equation*}

Comparing our methods, we have
\begin{equation*}
 1-\frac{r^{1-\frac{\epsilon}{8}}}{m} \leq \left(\frac{b}{m}\right)^{k_0-\log r} k_0^{\log r}.  
\end{equation*}
Using the relationship $e^{(-1+o_x(1))x} \leq 1-x \leq e^{-x}$, we have
\begin{equation*}
 e^{-(1+o(1))\frac{r^{1-\frac{\epsilon}{8}}}{m}} \leq e^{-(k_0-\log r)\frac{m-b}{m}+\log r \log k_0}
\end{equation*}
 Taking logs and using the definition of $k_0$ gives
\begin{equation*}
 \frac{m-b}{m} \leq \frac{\log m \log k_0 + (1+o(1)) \frac{r^{1-\frac{\epsilon}{8}}}{m}}{k_0 - \log r} =O(\frac{1}{\log^6 r}).
\end{equation*}
It follows that $b \geq m-O(\frac{m}{log^6 r})$, so we are done.  

\subsection{The proof of Lemma \ref{lemma:typical}, from Lemma \ref{lemma:largesubmatrix}}
We construct our rank one submatrix using the following procedure.  Let $A_0$ be a rank one submatrix of $A$ of size $(m-O(\frac{m}{log^6 r})) \times (n-O(r^{1-\frac{5\epsilon}{4}}))$ (such a matrix is guaranteed to exist by Lemma \ref{lemma:largesubmatrix}).  We initialize $X_1 \subseteq \{x_1, \dots x_n\}$ to be the variables corresponding to the rows of $A_0$, and $X_2$ to be the remaining variables, and $X_3$ to initially be empty.  We also initially set 
$Y_1$ to be the variables corresponding to the columns of $A_0$.  We now repeatedly follow the following procedure:

If the matrix corresponding to $(X_1 \cup X_2) \times Y_1$ has rank one, stop.  If this is not the case, choose $x_i \in X_1, x_j \in X_2,$ and $y_k, y_l \in Y_1$ such that $a_{ik} a_{jl} \neq a_{il}a_{kj}$.  Move $x_j$ from $X_2$ to $X_3$, and remove $y_k$ and $y_l$ from $Y_1$.

We can always find the necessary $x_i$ and $x_j$ since the matrix on $X_1 \times Y_1$ will always be a rank one matrix due to our choice of $A_0$.  It remains to check that this procedure in fact terminates after at most $O(\frac{r}{\log^5 r})$ steps, so that the final rank one matrix is sufficiently large.   Let us assume to the contrary that this does not occur.  

Let $S$ be a set of size $r$ formed by taking $\frac{r}{\log^6 r}$ variables from $X_3$ and $r-\frac{r}{\log^6 r}$ variables from $X_1$, and let $T$ be the remaining variables in $X$.  Let $\widetilde{A}$ be the submatrix of $A$ consisting of the rows corresponding to $S$.  We can write
\begin{equation*}
 x^TAy-f(y)=x_S^T\widetilde{A}y-g(y, x_T),
\end{equation*}
where $x_S$ (resp. $x_T$) is the vector of variables in $S$ (resp. $T$) By assumption we have 
\begin{eqnarray*}
 r^{-\frac{1}{2}-\epsilon} &\leq& \P(x^TAy=f(y)) \\
&=&\E_T (\P_S (x_S^T \widetilde{A}y=g(y, x_T)) \\
&\leq& \sup_{x_T} \P(x_S^T \widetilde{A} y = g(y, x_T)). 
\end{eqnarray*}
It follows from Lemma \ref{lemma:largesubmatrix} that $\widetilde{A}$ must contain a rank one submatrix of size $\\ (r-O(\frac{r}{\log^6 r})) \times (n-O(r^{1-\frac{5\epsilon}{4}}))$.   Since the number of excluded variables is much smaller than $\frac{r}{\log^5 r}$, there must be a variable $x_j \in X_3$ such that both $x_j$ and the corresponding $y_k$ and $y_l$ are contained in this submatrix, as well as some variable $x_{i'} \in X_1$.  However, this is a contradiction, as $a_{i'k} a_{jl} \neq a_{i'l}a_{kj}$.

\subsection{The proof of Lemma \ref{lemma:manyneighborly}}
We define $g_y$ and $D_y$  as follows:
\begin{itemize}
 \item{If $y$ is atypical, then $g_y=0$ and $D_y=\{1, \dots m\}$}
 \item{If $y$ is typical and no arithmetic progression of length at most $r^{\frac{1}{2}-\frac{\epsilon}{4}}$ contains at least $m- r^{1-\frac{\epsilon}{4}} $ of the elements of $Ay$, then $g_y=r^{-\frac{1}{2}+\frac{\epsilon}{4}}$ and $D_y=\{1, \dots m\}$} 
 \item{Otherwise, let $R$ be an arithmetic progression of minimal length containing 0 and at least $m- r^{1-\frac{\epsilon}{4}}$ elements of $Ay$.  We define $g_y=|R|^{-1}, $ and $D_y$ to be those $i$ such that the $i^{th}$ coordinate of $Ay$ is in $R$.}
\end{itemize}
Note that in this definition the $D_y$ are not uniquely determined.  We choose one arbitrarily for each $y$.   Furthermore, by construction, for any $k-$tuple contained in $D_y$, we have $Comm(a_1, \dots a_k) \geq g_y$.  

By viewing the Inverse Littlewood-Offord Theorem \ref{thm:sharpILO} in the ``forward'' direction we can now obtain the following:
\begin{lemma} \label{lemma:reverseinverse}
For every fixed $\epsilon<\frac{1}{2}$ there is an $r_0>0$ such that for all matrices $A$ with $r>r_0$ and all typical $y^*$ we have  
\begin{equation*}
 \P(x^TAy= f(y)| y=y^*) \leq r^{-\frac{1}{2}+\frac{3\epsilon}{8}} g_{y^*}.
\end{equation*}
\end{lemma}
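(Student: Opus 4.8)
The plan is to fix a typical $y^{*}$ and treat $x^{T}Ay^{*}=\sum_{i}(Ay^{*})_{i}x_{i}$ as a linear form in the Bernoulli variables $x_{i}$; by the definition of ``typical'' it has $M\ge r^{1-\epsilon/4}$ nonzero coefficients. The ordinary Littlewood--Offord inequality (Theorem \ref{thm:quantLO}) gives $\P(x^{T}Ay^{*}=f(y^{*}))\le M^{-1/2}$, and since for typical $y^{*}$ one always has $g_{y^{*}}\ge r^{-1/2+\epsilon/4}$, this already yields the desired bound whenever $M$ is large enough: if $M\ge r^{2-5\epsilon/4}$ then $M^{-1/2}\le r^{-1+5\epsilon/8}\le r^{-1/2+3\epsilon/8}g_{y^{*}}$ in both branches of the definition of $g$. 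So I may assume $M<r^{2-5\epsilon/4}$ and, arguing by contradiction, that $\P(x^{T}Ay^{*}=f(y^{*}))>r^{-1/2+3\epsilon/8}g_{y^{*}}\ge r^{-1+5\epsilon/8}$.

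Next I read Theorem \ref{thm:sharpILO} in the ``forward'' direction. Define $\epsilon_{1}$ by $\P=M^{-1/2-\epsilon_{1}}$, so $\epsilon_{1}\ge 0$ by the ordinary bound, and combining $M\ge r^{1-\epsilon/4}$ with $\P>r^{-1+5\epsilon/8}$ one checks $\epsilon_{1}$ stays bounded away from $1/2$. Applying Theorem \ref{thm:sharpILO} with this $\epsilon_{1}$ and a small $\alpha_{1}$ (a fixed multiple of $\epsilon$) gives: all but $M^{1-\alpha_{1}}$ of the nonzero coefficients are of the form $db_{i}$ with $b_{i}\in\Z$, $|b_{i}|\le M^{\epsilon_{1}+\alpha_{1}}$. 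Here $M^{\epsilon_{1}}=M^{-1/2}\P^{-1}<r^{-1/2+\epsilon/8}\cdot r^{1-5\epsilon/8}=r^{1/2-\epsilon/2}$, while $M^{\alpha_{1}}<r^{(2-5\epsilon/4)\alpha_{1}}$ is negligible by the choice of $\alpha_{1}$, so these coefficients, together with the zero coordinates, all lie in an arithmetic progression $R_{1}$ of length $<\frac{1}{2}r^{1/2-\epsilon/4}$ containing $0$. Suppose for the moment that the leftover set has size at most $r^{1-\epsilon/4}$. Then $R_{1}$ is a progression of length $\le r^{1/2-\epsilon/4}$ missing at most $r^{1-\epsilon/4}$ of the $m$ entries of $Ay^{*}$: in the first branch this contradicts the defining property of $g_{y^{*}}=r^{-1/2+\epsilon/4}$ outright; in the second branch minimality of $R$ forces $|R|\le|R_{1}|\le 2M^{\epsilon_{1}+\alpha_{1}}+1$, and feeding $|R|^{-1}\ge\frac{1}{3}M^{-\epsilon_{1}-\alpha_{1}}$ and $\P=M^{-1/2-\epsilon_{1}}$ into the assumption $\P>r^{-1/2+3\epsilon/8}|R|^{-1}$ yields $M^{1/2-\alpha_{1}}<3r^{1/2-3\epsilon/8}$, impossible for large $r$ since $M\ge r^{1-\epsilon/4}$ and $\alpha_{1}$ is a small enough multiple of $\epsilon$.

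The step I expect to be the main obstacle is removing the parenthetical assumption that the leftover set has size $\le r^{1-\epsilon/4}$: a single use of Theorem \ref{thm:sharpILO} only bounds it by $M^{1-\alpha_{1}}$, which exceeds $r^{1-\epsilon/4}$ precisely when $M$ falls in the range $r^{1+\Theta(\epsilon)}\le M<r^{2-5\epsilon/4}$, a range a typical $y^{*}$ can realize. The natural remedy is to iterate: the linear form supported on the current leftover coordinates still has point concentration at least $\P$ (erasing terms from a Bernoulli sum cannot decrease the concentration at a point), so Theorem \ref{thm:sharpILO} applies again --- still with a parameter below $1/2$, since the leftover size is $>r^{1-\epsilon/4}>r^{1-5\epsilon/8}>\P^{-1}$ --- and produces a further short progression covering all but a power-smaller subset of those coordinates. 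Since the leftover size is raised to a power bounded below $1$ at each round and the process halts once it dips below $r^{1-\epsilon/4}$, only $O_{\epsilon}(1)$ rounds are needed, producing short progressions $R_{1},R_{2},\dots$ through $0$ whose union misses at most $r^{1-\epsilon/4}$ entries of $Ay^{*}$. The real work is then to merge these $O_{\epsilon}(1)$ progressions into a single progression of length still $\le r^{1/2-\epsilon/4}$: one must show their common differences are mutually commensurable with controlled ratios --- which one arranges by applying some of the later rounds to sets that overlap an earlier $R_{j}$, so that a coefficient lying in two progressions links their differences --- and then check that the resulting blow-up of the length is absorbed by the slack between $r^{1/2-3\epsilon/8}$ and $r^{1/2-\epsilon/4}$. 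With one short progression in hand, the contradiction with the definition of $g_{y^{*}}$ closes exactly as in the previous paragraph. Apart from this merging step, the only non-formal part of the argument is keeping the exponents $\frac{\epsilon}{4}$, $\frac{3\epsilon}{8}$, $\frac{5\epsilon}{8}$ consistent throughout.
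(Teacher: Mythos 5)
Your first two paragraphs mirror the paper's argument: reduce to $\P>r^{-1+5\epsilon/8}$, set $M=r_1$ (the number of nonzero entries of $Ay^*$), apply Theorem~\ref{thm:sharpILO} with a small fixed $\alpha_1$, and compare the resulting progression's length against the minimality of $R$ in the definition of $g_{y^*}$. You correctly observe that the direct application only closes the case where the ILO exceptional set, of size at most $M^{1-\alpha_1}$, falls inside the $r^{1-\epsilon/4}$ allowance in that definition, and that a typical $y^*$ can have $M$ anywhere up to $m$, so the regime $r\ll M<r^{2-5\epsilon/4}$ is genuinely present. (The paper applies ILO with $\alpha=\epsilon/4$ and concludes as though $M^{1-\epsilon/4}\le r^{1-\epsilon/4}$, so you have flagged a real subtlety.) Your disposal of $M\ge r^{2-5\epsilon/4}$ via $\P\le M^{-1/2}$ is also correct, as is the casework on the two branches of $g_{y^*}$ once the leftover set is assumed small.

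The gap is in the iterate-and-merge step, and it is not merely a matter of writing it up. If $R_i,R_j$ are progressions through $0$ with lengths $L_i,L_j$ and differences $d_i,d_j$, and they share one nonzero coefficient $a=k_id_i=k_jd_j$ with $\gcd(k_i,k_j)=1$, then the shortest progression through $0$ containing both has length $\max(L_ik_j,L_jk_i)$, which is $\Theta(L_iL_j)$ in the worst case: merging \emph{multiplies} lengths rather than adding them. Each round of your iteration produces a progression of length up to roughly $M_j^{-1/2+\alpha_1}/\P$, and the final round (at $M_j\approx r^{1-\epsilon/4}$) already has length close to $r^{1/2-3\epsilon/8}$; since the iteration requires $\Theta(1/\alpha_1)=\Theta(1/\epsilon)$ rounds to bring $M_j$ from $r^{2-5\epsilon/4}$ down to $r^{1-\epsilon/4}$, the product of the lengths can be $r^{\Theta(1/\epsilon)}$, which dwarfs the slack of $r^{\epsilon/8}$ you are counting on. A single shared coefficient does not pin $d_i/d_j$ to a bounded-height rational, so ``applying later rounds to sets that overlap an earlier $R_j$'' is not enough; you would need to arrange that consecutive progressions share a \emph{dense} set of coefficients (forcing $d_i/d_j$ to have height $O(1)$, so lengths grow only additively), and that additional argument is absent. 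As written, the proposal identifies the hard case but does not resolve it.
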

\begin{proof} (of Lemma \ref{lemma:reverseinverse}):  
Since by construction $g_{y^*} \geq r^{-\frac{1}{2}+\frac{\epsilon}{4}}$, there is nothing to prove unless the probability in question is at least $r^{-1+\frac{5\epsilon}{8}}$, which we will assume to be the case.    
Let $r_1$ be the number of nonzero coefficients of $x^TAy^*$, viewed as a linear form in $x$, and let $\P(x^TAy^*=f(y^*))=r_1^{-\frac{1}{2}-\epsilon_0}$.  Since $y^*$ is typical, $r_1 \geq r^{1-\frac{\epsilon}{4}}$. In particular, this implies that $\epsilon_0<\frac{1}{2}$.   

 Applying Theorem \ref{thm:sharpILO} to this form with $\alpha=\frac{\epsilon}{4}$, we see there is an arithmetic progression containing all but $r_1^{1-\frac{\epsilon}{4}}$ coefficients and of length 
\begin{eqnarray*}
 r_1^{\epsilon_0+\alpha}&=&\frac{r_1^{-\frac{1}{2}+\frac{\epsilon}{4}}}{\P(x^TAy^*=f(y^*))} \\
&\leq& \frac{r^{(-\frac{1}{2}+\frac{\epsilon}{4})(1-\frac{\epsilon}{4})}}{\P(x^TAy^*=f(y^*))}  \\
&=& \frac{r^{-\frac{1}{2}+\frac{3\epsilon}{8}}}{\P(x^TAy^*=f(y^*))} r^{-\frac{\epsilon^2}{16}}.
\end{eqnarray*}
If follows that $g_{y^*} \geq r^{\frac{1}{2}-\frac{3 \epsilon}{8}} \P(x^TAy^*=f(y^*))$ as desired.

\end{proof}

Taking expectations over all $y$, we see that
\begin{equation*}
\P(x^T A y = f(y) \wedge y \textrm{ is typical } ) \leq r^{-\frac{1}{2}+\frac{3\epsilon}{8}} \E_y(g_y),
\end{equation*}
which combined with the hypothesis of Lemma \ref{lemma:typical} in turn implies that
\begin{equation} \label{eqn:gexpect}
 \E_y(g_y) \geq r^{-\frac{1}{2}+\frac{5\epsilon}{8}}
\end{equation}

Let $Z$ be the collection of $k-$tuples satisfying
\begin{equation*} 
 \E_y(g(y) \chi (\{a_1, \dots a_k\} \subseteq D_y)) \geq \frac{1}{3} \E_y (g_y)
\end{equation*}
By \eqref{eqn:gexpect}, every tuple in $Z$ is neighborly.  It remains to check that $|Z|$ is large.  

Since by construction $|D_y| \geq m-r^{1-\frac{\epsilon}{4}}$ for every $y$, we have 
\begin{eqnarray*}
 \E_{a_1, \dots a_k} \E_y(g_y \chi (\{a_1, \dots a_k\} \subseteq D_y) ) &=& \E_y(g_y \P(\{a_1, \dots a_k\} \in D_y)) \\
&\geq& (\frac{m-r^{1-\frac{\epsilon}{4}}}{m})^k \E_y(g_y).
\end{eqnarray*}
Combining this with the definition of $Z$, we have 
\begin{equation*}
 |Z|\E_y (g_y) +\frac{\E_y(g_y)}{3}(m^k-|Z|) \geq (m-r^{1-\frac{\epsilon}{4}})^k \E_y(g_y)  \geq m^k(1-\frac{kr^{1-\frac{\epsilon}{4}}}{m}) \E_y(g_y)
\end{equation*}
Solving the above inequality, we obtain
\begin{equation*}
 |Z| \geq m^k(1-\frac{3k}{2} \frac{r^{1-\frac{\epsilon}{4}}}{m}) \geq m^k(1-\frac{r^{-\frac{\epsilon}{8}}}{m})
\end{equation*}
and we are done.

\subsection{The proof of Lemma \ref{lemma:neighborstruc} for $k=2$}.  
Let $(a,b)$ be a pair of neighborly vectors.  Our goal will be to show that they are very close to being multiples of each other.   

We make use of the general fact that for any random variable $X$ taking values between 0 and 1
\begin{equation} \label{eqn:expectationbound}
 \E( X ) = \int_{u=0}^1 \P(X>u) du = \int_{t=1}^{\infty} \frac{\P(X>\frac{1}{t})}{t^2} dt
\end{equation}

In our case $X$ will be $Comm(a^T y,b^T y)$, so bounding the right hand side becomes a question of how likely it is for $a^T y$ and $b^T y$ to be embeddable in a progression of a given length.  We make the following further definitions:
\begin{definition}
 A pair $(l_1,l_2)$ of integers is \textbf{degenerate} for the vector pair $(a,b)$ if $l_1a$ and $l_2b$ agree in at least $n-\frac{r}{5}$ positions and at least one of $l_1$ and $l_2$ is nonzero.  
\end{definition}
Note that there is (up to multiples) at most one degenerate pair for $(a,b)$.  

We further define for an integer $q$ 
\begin{equation*}
 p_{ab}(q):= \P( \exists (l_1,l_2) \neq (0,0) | (l_1,l_2) \textrm{ is non-degenerate} \wedge l_1 a^T y = l_2 b^T y \wedge |l_1|, |l_2| \leq q)
\end{equation*}
Using these definitions and the definition of $Comm(a,b)$, we have
\begin{eqnarray*} 
 r^{-\frac{1}{2}+\frac{5\epsilon}{8}} &\leq& \E_y(Comm(a^Ty, b^Ty)) \\
&\leq& (\int_{t=1}^{r^{\frac{1}{2}-\frac{\epsilon}{4}}} \frac{p_{ab}(q)}{q^2} dq)+r^{-\frac{1}{2}+\frac{\epsilon}{4}}+ \P(k_0 a^Ty=l_0 b^T y \textrm{ for a degenerate }(k_0, l_0)),
\end{eqnarray*}

The middle term on the right hand side is negligible, and we will next show that the first term is also small by showing 
\begin{lemma} \label{lemma:doubleembed}
 For any positive $\alpha>0$, any $q<\sqrt{r}$ and any $a$ and $b$, there is a constant $C_\alpha$ dependent only on $\alpha$ such that $p_{ab}(q) \leq \frac{C_\alpha q}{r^{1/2-\alpha}}$.   
\end{lemma}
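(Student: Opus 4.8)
The plan is to reduce $p_{ab}(q)$ to a single two-dimensional anti-concentration probability and to estimate that probability by a Hal\'asz-type bound together with a lattice-point count. A direct union bound over the $\asymp q^2$ admissible pairs $(l_1,l_2)$ only gives $O(q^2 r^{-1/2})$, which is far too weak, so the two-dimensional dispersion of the joint walk $(a^Ty,b^Ty)$ must be used.

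First I would reduce. Since $p_{ab}(q)$ is unchanged under $(a,b)\mapsto(\lambda a,\lambda b)$, normalize so that $\max(\|a\|_2,\|b\|_2)=1$. If $(l_1,l_2)$ is non-degenerate then so is the pair obtained by dividing out $\gcd(l_1,l_2)$ (the set of coordinates where $l_1a$ and $l_2b$ agree is unchanged), so it suffices to treat coprime pairs; the pairs with $l_1=0$ or $l_2=0$ contribute only $\{b^Ty=0\}$ or $\{a^Ty=0\}$, each of probability $O(r^{-1/2})$ by Theorem~\ref{thm:quantLO} since $a$ and $b$ each have at least $r$ nonzero entries, and this is absorbed into the claimed bound. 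Identifying $\C^2$ with $\R^2$ (splitting into real and imaginary parts, or working in $\R^4$, in the genuinely complex case), it remains to bound $\Pi:=\P\big((a^Ty,b^Ty)\in\bigcup_j(L_j\setminus\{0\})\big)$, where $L_j$ runs over the $O(q^2)$ lines through the origin with coprime primitive integer direction $(l_2,l_1)$, $\max(|l_1|,|l_2|)\le q$, and non-degenerate associated pair; non-degeneracy of $L_j$ means exactly that at least $r/5$ of the $(a_i,b_i)$ lie off $L_j$. Because the $L_j\setminus\{0\}$ are pairwise disjoint, $\Pi=\sum_j\P((l_1a-l_2b)^Ty=0)$, and the point is that this sum is much smaller than $q^2$ times its typical term $\asymp r^{-1/2}$.

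The main estimate splits on the geometry of $\{(a_i,b_i)\}$. If no line through the origin contains more than $n-\lceil r/5\rceil$ of the $(a_i,b_i)$, then Theorem~\ref{thm:Halasz2d} ($d=2$), or rather its small-ball strengthening obtained from the same Fourier-analytic argument, gives $\P((a^Ty,b^Ty)=c)=O(r^{-1})$ for every $c$; discarding (via Hoeffding) the probability $\le r^{-10}$ that $(a^Ty,b^Ty)$ escapes the box $[-K,K]^2$ with $K:=5\sqrt{\log r}$, we get $\Pi\le O(r^{-1})\,A$, where $A$ is the number of atoms of $(a^Ty,b^Ty)$ on $\bigcup_jL_j$ inside that box. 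The crucial geometric claim is $A=O_\alpha(q\sqrt r\,r^\alpha)$: after discretizing the coefficients the atoms lie in a lattice, the box has side $O(\sqrt{r\log r})$ in lattice units, a primitive integer direction of height $g$ meets it in $O(\sqrt{r\log r}/g)$ lattice points, and $\sum_{g\le q}\#\{\text{primitive directions of height }g\}/g=O(q)$; the $\log$ factor and the rounding error are swallowed by $r^\alpha$. Hence $\Pi=O_\alpha(q\sqrt r\,r^\alpha)\cdot O(r^{-1})=O_\alpha(q r^{-1/2+\alpha})$. In the remaining case some primitive line $L^*=\{l_1^*X=l_2^*Y\}$ carries at least $n-\lceil r/5\rceil$ of the $(a_i,b_i)$; then $l_1^*,l_2^*\ne0$ and $L^*$ is none of the $L_j$ (it is either too tall or its pair is degenerate). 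Writing $(a_i,b_i)=c_i(l_2^*,l_1^*)$ on the $\ge n-r/5$ ``good'' coordinates, putting $P=\sum_{\mathrm{good}}c_iy_i$ and letting $Q_a,Q_b$ be the $\le r/5$-term ``bad'' sums, membership $(a^Ty,b^Ty)\in L_j$ becomes $D_jP=l_2Q_b-l_1Q_a$ with $D_j:=l_1l_2^*-l_2l_1^*\ne0$; conditioning on the bad coordinates fixes $Q_a,Q_b$ and reduces each event to $P=v_j$ for an explicit $v_j$, and since $P$ is a one-dimensional walk with $\ge 4r/5$ nontrivial steps and the $v_j$ — a M\"obius image of the Farey fractions of order $q$ — form a well-separated set, a one-dimensional small-ball bound gives $\P(P\in\{v_j\}\mid\text{bad})=O_\alpha(q r^{-1/2+\alpha})$ uniformly, after which one averages over the bad coordinates.

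The main obstacle I anticipate is the anti-concentration input together with its scale: Theorems~\ref{thm:quantLO} and~\ref{thm:Halasz2d} are stated for exact atoms, whereas the argument needs one- and two-dimensional small-ball versions at a scale matched to the sizes of the $(a_i,b_i)$, so one must either re-run their Fourier-analytic proofs or discretize $a,b$ carefully enough that the walk becomes lattice-valued without distorting the relevant supports or the non-degeneracy hypothesis — this discretization and its error analysis are delicate and are what create the $r^\alpha$ slack. The secondary difficulty is the degenerate configuration, which forces the dichotomy above; there the crux is bounding the spread of the value set $\{v_j\}$ (equivalently, showing a walk confined near $L^*$ can meet a different rational line only near the origin).
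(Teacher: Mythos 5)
Your approach is genuinely different from the paper's and the main step has a gap that I don't see how to repair without importing the paper's central device.

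The paper's proof first passes to integers by Freiman isomorphism, then makes a dichotomy on the ``collision counts'' $L_1\le L_2\le\dots\le L_k$ (where $L_j$ counts $j$-tuples $(i_1,\dots,i_j)$ with $(a_{i_1}+\dots+a_{i_j},b_{i_1}+\dots+b_{i_j})$ hitting a worst-case value). If $L_k$ stays small, a Hal\'asz bound phrased in terms of $R_k$ wins after a union bound over $(u,v)$. If some $L_j$ jumps, the paper extracts $\Theta(n/q^{2/(2k+1)})$ \emph{disjoint} $j$-tuples all summing to one fixed pair $(c,d)$, conditions on each such tuple being ``agreeable'' ($x_{i_1}=\dots=x_{i_j}$), and thereby manufactures a genuinely \emph{integer-valued} auxiliary walk $Y=\sum_{S'} y_j$ so that $\sum a_ix_i/\sum b_ix_i\sim (cY+z_1)/(dY+z_2)$; then Lemma~\ref{lemma:azplusb} counts how many integers $z$ in a range of length $n$ make $(az+b)/(cz+d)$ a fraction of height $\le q$. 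The integrality of $Y$ is exactly what makes that counting lemma applicable.

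Your Case~A is where the gap lies. You want to bound $\Pi=\sum_j\P\big((a^Ty,b^Ty)\in L_j\big)$ by $O(1/r)\cdot A$, where $A$ is the number of atoms of $(a^Ty,b^Ty)$ on $\bigcup_j L_j$ inside a bulk box, and you assert $A=O_\alpha(q\sqrt r\,r^\alpha)$ by a lattice-point count. But the entries $a_i,b_i$ are arbitrary nonzero complex numbers. After a Freiman isomorphism they become integers, but of uncontrolled size; there is no normalization under which the walk simultaneously (i) lives on a lattice, (ii) has the bulk box of side $O(\sqrt{r}\,r^{o(1)})$ in lattice units, and (iii) retains the non-degeneracy hypothesis. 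Concretely, if the $(a_i,b_i)$ are distinct integers of size $M$, the bulk box has side $\Theta(M\sqrt r)$ in lattice units and a primitive direction of height $g$ meets it in $\Theta(M\sqrt r/g)$ lattice points, so your count is off by a factor of $M$, which $r^\alpha$ cannot absorb. Truncating to ``atoms with probability $\ge 1/r$'' does not help either: that only recovers $\P((l_1a-l_2b)^Ty=0)=O(r^{-1/2})$ per line, i.e.\ the naive $O(q^2r^{-1/2})$ you correctly rejected. The $r^\alpha$ slack in the paper comes from the divisor bound inside Lemma~\ref{lemma:azplusb}, not from an approximation scheme; there is no discretization step in the paper that plays the role you are asking yours to play.

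Your Case~B has the same problem in different clothing: the walk $P=\sum_{\mathrm{good}} c_iy_i$ has arbitrary real steps, so its support is not a set of evenly spaced integers, and ``$\P(P\in\{v_j\}\mid\mathrm{bad})=O_\alpha(qr^{-1/2+\alpha})$'' is exactly a count of M\"obius images of Farey fractions that land on atoms of $P$ --- which you cannot control without knowing the spacing of those atoms. The agreeability construction is precisely the missing idea: by conditioning on disjoint tuples being constant, the paper replaces the real-step walk by the plain $\pm1$ walk $Y$, whose support \emph{is} an interval of consecutive integers, and only then does the Farey/M\"obius count (Lemma~\ref{lemma:azplusb}) give $O(qN^\alpha)$ special points out of $N$. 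To make your route work you would need to first find such an integer surrogate, at which point you are essentially reproducing the paper's Case~2; and you would still need a separate mechanism replacing the $L_j$ dichotomy to enter that case, since ``no heavy line'' alone does not guarantee the $R_k$-type cancellation that drives the paper's Case~1.
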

We may without loss of generality assume $C_\alpha>1$.  It follows that for any $0<\alpha<\frac{1}{2}$, assuming Lemma \ref{lemma:doubleembed}, we have  
\begin{eqnarray*}
 \int_{t=1}^{\infty} \frac{p_{ab}(q)}{q^2} dq  &\leq& \int_{1}^{r^{1/2-\alpha}} \frac{C_\alpha}{q r^{1/2-\alpha}} dq + \int_{r^{1/2-\alpha}}^\infty \frac{dq}{q^2} \\
&=&O_{\alpha} (r^{-1/2+\alpha} \ln r)
\end{eqnarray*}
By taking $\alpha$ sufficiently close to $0$, we see that for large $r$ the contribution from the first term is also $o(r^{-\frac{1}{2}+\frac{5\epsilon}{8}})$.

It follows that the dominant contribution to the expectation must come from the third term.  This implies that a degenerate pair $(k_0, l_0)$ exists, and that we furthermore must have.
\begin{equation*}
 \P(k_0 a^T y = l_0 b^T y)  \geq \frac{1}{12} r^{-\frac{1}{2}+\frac{5\epsilon}{8}}.  
\end{equation*}
It follows by the linear Littlewood-Offord lemma that the linear form $(k_0 a^T-l_0 b^T)y$ must have $O(r^{1-\frac{5\epsilon}{4}})$ nonzero coefficients.  But this is exactly what the lemma requires.  

\subsection{The proof of Lemma \ref{lemma:doubleembed}}
It suffices to prove the following:

\begin{lemma}
 Let $a_1, \dots a_n$ and $b_1, \dots b_n$ be fixed (real or complex) constants such that for each $i$ at least one of $a_i$ and $b_i$ is non-zero.  Let $x_1, \dots x_n$ be iid variables uniformly chosen from $\{-1, 1\}$.  Let $E_q$ be the event that there exist $u$ and $v$ satisfying
\begin{itemize}
 \item{$|u|, |v| \leq q$}
 \item{There are at least $\frac{n}{10}$ different $i$ for which $va_i \neq u a_i$.}
 \item{$v \sum a_i x_i=u\sum b_i x_i$}
\end{itemize}
Then for any $\alpha>0$ and any $1 \leq q<\sqrt{n}$,
\begin{equation*}
 \P(E_q) = O(\frac{q}{\sqrt{n}}n^\alpha),
\end{equation*}
where the constant implicit in the $O$ notation is as $n$ tends to infinity and may depend on $\alpha$.  
\end{lemma}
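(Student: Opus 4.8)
The plan is to beat the trivial union bound---which loses exactly the factor $q$ that must be saved---by reformulating $E_q$ arithmetically and then separating the two ways the pair of correlated walks can fail to be dispersed. Put $S_a=\sum a_ix_i$ and $S_b=\sum b_ix_i$, and let $c^{(u,v)}$ be the vector whose entries are $va_i-ub_i$. After the routine reduction to commensurable (say integer) coefficients, the disposal of the one-sided cases $S_a=0$ and $S_b=0$ (each of probability $O(n^{-1/2})$ by Theorem~\ref{thm:quantLO}), and the setting aside of the at most one ``degenerate'' pair for which $c^{(u,v)}$ has fewer than $n/10$ nonzero entries, one sees that $E_q$ is contained in the event that the reduced height of $S_a/S_b$---the larger of the absolute values of numerator and denominator in lowest terms---is at most $q$. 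Since $q<\sqrt n$ it therefore suffices to prove the \emph{uniform} estimate $\P(\textrm{reduced height of }S_a/S_b = h)=O_\alpha(n^{-1/2+\alpha})$ for $1\le h\le q$ and sum over $h$. The crude union bound over the $O(h)$ coprime fractions $p/p'$ of height $h$, with $\P(\langle c^{(p,p')},x\rangle=0)=O(n^{-1/2})$ from Theorem~\ref{thm:quantLO} (legitimate because every such $c^{(p,p')}$ other than the degenerate one has at least $n/10$ nonzero entries), only yields $O(hn^{-1/2})$, which is worthless near $h\sim\sqrt n$; the substance is to replace it by $n^{-1/2+o(1)}$.

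I would split on whether some line through the origin contains all but a small fraction of the points $(a_i,b_i)$, using Theorem~\ref{thm:sharpILO} to detect the dichotomy. In the genuinely two-dimensional case, a mod-$d$ analogue of Hal\'{a}sz's estimate (Theorem~\ref{thm:Halasz2d} in dimension two, reduced modulo $d$) bounds $\P((S_a,S_b)=(s,t))$ and $\P(d\mid S_a,\ d\mid S_b)$ by $O_\alpha(n^{-1+\alpha})$ and $O_\alpha(d^{-2}n^{\alpha})$ respectively; since ``reduced height $=h$'' forces $\gcd(S_a,S_b)=\max(|S_a|,|S_b|)/h$, one obtains the desired bound by summing these estimates over the admissible range of $d$ (controlled by a second-moment bound on the walks) and invoking $\P(|S_a|\le q)+\P(|S_b|\le q)=O(qn^{-1/2})$ for the part where one of the walks is small. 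In the near-proportional case---$a=d_aw$ and $b=d_bw$ off a small exceptional coordinate set $B$, with $w$ a short vector, all extracted from Theorem~\ref{thm:sharpILO}---conditioning on the coordinates in $B$ turns $S_a/S_b$ into a fixed M\"{o}bius function of the single walk $W=\sum_{i\notin B}w_ix_i$, and the problem collapses: the set of $w_0$ for which $W=w_0$ is attainable and that M\"{o}bius function has reduced height $h$ has size $n^{o(1)}$ (in the model case $a=(1,\dots,1)$, $b=(1,\dots,1,2)$ it is literally $O(1)$, which is the mechanism that makes near-proportional forms extremal), so Theorem~\ref{thm:quantLO} applied to $W$ again gives $O_\alpha(n^{-1/2+\alpha})$.

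The two steps I expect to be hardest are both in the near-proportional case. First, bounding by $n^{o(1)}$ the number of attainable $w_0$ of height $h$: this is a count of coprime height-$h$ solutions $(p,p')$ of a divisibility relation $(p'd_a-pd_b)\mid(pE_b-p'E_a)$ with the resulting $w_0$ lying in the effective range of $W$, and when $w$ is not bounded one must feed back the structure of $w$ obtained from a further application of Theorem~\ref{thm:sharpILO}---this bootstrap is the source of the $n^{\alpha}$ loss in the statement. Second, the bookkeeping: the exceptional sets supplied by Theorem~\ref{thm:sharpILO} depend on the fraction, and the structural conclusions must be merged over a growing family of fractions without the accumulated exceptional set exceeding $o(n)$, which is exactly the difficulty handled by the aggregation arguments of Lemmas~\ref{lemma:neighborstruc} and~\ref{lemma:largesubmatrix}. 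Everything else---the reduction to integer coefficients, the isolation of the single degenerate pair, the two-dimensional Littlewood--Offord input, and the crude union bound over height-$h$ fractions---is routine.
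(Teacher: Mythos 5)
Your overall strategy---reduce to integer coefficients, dispose of the degenerate pair, bound the probability that $S_a/S_b=(\sum a_ix_i)/(\sum b_ix_i)$ has small reduced height, and split into a ``structured'' branch (reduced to counting $z$ with $(az+b)/(cz+d)$ of small height, which is exactly the content of Lemma~\ref{lemma:azplusb}) and an ``unstructured'' branch---is recognizably in the same spirit as the paper. But your dichotomy (proportional vs.\ genuinely two-dimensional coefficient cloud) is different from the paper's, which splits on whether the two-dimensional additive energy $L_k=\sup_{(c,d)}|\{(i_1,\dots,i_k):\sum a_{i_j}=c,\ \sum b_{i_j}=d\}|$ is large or small, and this difference is where the argument breaks.

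The gap is in the ``genuinely two-dimensional'' branch, specifically the asserted mod-$d$ estimate $\P(d\mid S_a,\ d\mid S_b)=O_\alpha(d^{-2}n^\alpha)$. That bound is simply false under the hypothesis that the points $(a_i,b_i)$ do not lie near a line through the origin. Take $a_i=d$ for every $i$ and $b_i=i$: the points $(d,i)$ are as two-dimensional as one could want, yet $d\mid S_a$ with probability $1$, so $\P(d\mid S_a,\ d\mid S_b)\asymp 1/d$, exceeding $d^{-2}n^\alpha$ once $d\gg n^\alpha$. The obstruction is that divisibility is governed by the arithmetic structure of the coefficients \emph{modulo} $d$, and a coefficient cloud that is Zariski-generic in $\R^2$ can still collapse onto a proper subgroup of $(\Z/d\Z)^2$; your proportionality dichotomy does not see this. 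Any Hal\'asz-type estimate ``reduced mod $d$'' would bound the maximal atom of $(S_a,S_b)$ in $(\Z/d\Z)^2$ in terms of how the residues $(a_i\bmod d,\ b_i\bmod d)$ spread out, which in the example above is one-dimensional, not two-dimensional, so the best available bound is $O(1/d)$, not $O(1/d^2)$. Even if you could bound $\P(S_a=pm,S_b=p'm)$ pointwise by $O(n^{-1+\alpha})$ (which Theorem~\ref{thm:Halasz2d} does legitimately give you in the genuinely 2D case), the sum over the gcd $m$ runs up to $\max(|S_a|,|S_b|)/h$, and after the Freiman reduction the integers $a_i,b_i$ may be polynomially large in $n$, so the range of $m$ is uncontrolled; the ``second-moment bound on the walks'' you invoke controls $|S_a|$ in units of $\max_i|a_i|\sqrt n$, not in absolute size. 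The paper avoids both issues at once: the energy quantity $L_k$ is small in precisely the situations (like $a_i\equiv 0\bmod d$) where your mod-$d$ bound fails, and the Hal\'asz energy estimate $\P(\sum(va_i-ub_i)x_i=0)=O(n^{-2k-1/2}R_k)$ combined with the observation that the double sum over $(u,v)$ and over $2k$-tuples has at most $L_kn^k$ nonzero terms collapses the entire union over $(u,v)$ in one step, with no need for a divisibility estimate or a truncated sum over gcd's. Conversely, when $L_k$ is large, the paper extracts many disjoint $j$-tuples summing to a fixed $(c,d)$ and conditions on their being ``agreeable,'' reducing $(S_a,S_b)$ to an affine image of a bounded-step $\pm1$ walk $z$, so that Lemma~\ref{lemma:azplusb} can be applied to $z$ (range $O(\sqrt n)$) rather than to $S_a,S_b$ (range possibly $n^{O(1)}$). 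Your near-proportional branch is morally the same as this second case, but you would still need the agreeable-tuple reduction or an equivalent device to control the range of the effective walk $W$; the ``short arithmetic progression'' extracted from Theorem~\ref{thm:sharpILO} does not by itself give you a walk of bounded step size, and applying Theorem~\ref{thm:sharpILO} to $S_a$ and $S_b$ separately does not obviously produce a common index vector $w$ with compatible scalings $d_a,d_b$ and a single small exceptional set $B$.
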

We will throughout assume that both $q$ and $n$ are tending to infinity.  By utilizing a Freiman isomorphism of order $2n^2$ (see for example \cite{TVadd}, Lemma 5.25), we may assume that the $a_i$ and the $b_i$ are all real integers.  We may furthermore without loss of generality assume for every $i$ either $b_i$ is positive or $b_i=0$ and $a_i$ is positive.  

Let $k$ be a positive integer satisfying that $k>\frac{1}{\alpha}$. We define $L_0=1$ and for $1 \leq j \leq k$, we define
\begin{equation*}
 L_j=\sup_{(c,d) \in \C^2} |\{(i_1, \dots i_j) : a_{i_1}+\dots+a_{i_j}=c \wedge b_{i_1}+\dots+b_{i_j}=d\}|
\end{equation*}
Clearly $1 \leq L_j \leq n^j$, and by treating $i_j$ as fixed we furthermore see that $ \\L_{j-1} \leq L_j \leq nL_{j-1}$.  This implies that one of the following two cases must hold
\begin{itemize}
\item{There is a $j$ between $1$ and $k$ for which $L_j \geq \frac{n}{q^{\frac{2}{2k+1}}}L_{j-1}$}
\item{$L_k \leq \frac{n^k}{q^{\frac{2k}{2k+1}}}$}
\end{itemize}
We handle each case separately.

\textbf{Case 1: $L_k \leq n^k q^{-\frac{2k}{2k+1}}$}.  Here we will make use of the following result of Hal\'{a}sz (implicit in \cite{Hal}, see also \cite{TVadd}):
\begin{theorem}
 Let $k>0$ be fixed, and let $v_1, \dots v_n$ be nonzero (real or complex) coefficients.  Let $R_k$ be the number of $2k-$tuples $(i_1, \dots i_k, j_1, \dots j_k)$ for which $a_{i_1}+\dots+a_{i_k}=a_{j_1}+\dots+a_{j_k}$.  Then for $x_i$ uniformly chosen from $\{-1, 1\}$,
\begin{equation*}
 \P(\sum_{i=1}^n a_i x_i=c) =O(n^{-2k-1/2}R_k).  
\end{equation*}
\end{theorem}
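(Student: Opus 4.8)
The plan is to run Hal\'asz's Fourier method, reducing the theorem to a single level‑set estimate for an exponential sum. Write $S=\sum_{i=1}^{n}a_ix_i$ and $\rho=\sup_c\P(S=c)$, and call the coefficients $a_1,\dots,a_n$ (in agreement with the displayed formula). Since $\rho$ depends only on which index sets $D,D'\subseteq\{1,\dots,n\}$ satisfy $\sum_{i\in D}a_i=\sum_{i\in D'}a_i$ (the values $\sum_i\epsilon_i a_i$ and $\sum_i\epsilon_i' a_i$ agree exactly when this holds for $D=\{i:\epsilon_i=-1\}$, $D'=\{i:\epsilon_i'=-1\}$), and $R_k$ depends only on the coincidences $a_{i_1}+\dots+a_{i_k}=a_{j_1}+\dots+a_{j_k}$, I would first apply a Freiman isomorphism of order $2n^2$ to $\{0,a_1,\dots,a_n\}$ (see \cite{TVadd}, Lemma 5.25) and assume all $a_i$ are integers. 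Then $S$ is integer‑valued, so Fourier inversion on $\Z$ gives
\begin{equation*}
\rho\le\int_{0}^{1}\bigl|\,\E\,e^{2\pi i\xi S}\,\bigr|\,d\xi=\int_{0}^{1}\prod_{l=1}^{n}|\cos(2\pi\xi a_l)|\,d\xi ,
\end{equation*}
while expanding $\bigl|\sum_l e^{2\pi i\xi a_l}\bigr|^{2k}$ and integrating identifies $R_k=\int_{0}^{1}|S(\xi)|^{2k}\,d\xi$, where $S(\xi):=\sum_{l=1}^{n}e^{2\pi i\xi a_l}$; also $R_k\ge n^k$, by the diagonal tuples.

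Next I would estimate the integrand. Applying AM--GM to $\prod_l\cos^2(2\pi\xi a_l)$ and using $\ln\cos^2\theta\le-\sin^2\theta$ gives $\prod_l|\cos(2\pi\xi a_l)|\le\exp\bigl(-\tfrac12\sum_l\sin^2(2\pi\xi a_l)\bigr)$, and $\sum_l\sin^2(2\pi\xi a_l)=\tfrac12\bigl(n-\mathrm{Re}\,S(2\xi)\bigr)$. Substituting $\eta=2\xi$ and putting $W(\eta):=n-\mathrm{Re}\,S(\eta)\ge0$, the theorem reduces to $\int_{0}^{1}e^{-W(\eta)/4}\,d\eta=O_k(n^{-2k-1/2}R_k)$. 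I would write the left side as the layer‑cake integral $\int_{0}^{\infty}\tfrac14e^{-w/4}\,\mathrm{meas}\{\eta:W(\eta)<w\}\,dw$. For $w\ge c_1 n$ the weight $e^{-w/4}$ is already exponentially small (and $e^{-c_1 n/4}=o(n^{-2k-1/2}R_k)$ since $R_k\ge n^k$); for $w<c_1 n$ one uses $1-\cos\phi=2\sin^2(\phi/2)$ and $\sin^2(\pi t)\ge4\|t\|^2$ (with $\|t\|$ the distance from $t$ to $\Z$) to obtain $\{\eta:W(\eta)<w\}\subseteq\mathcal G(w/8)$, where $\mathcal G(s):=\{\eta\in[0,1]:\sum_{l=1}^{n}\|\eta a_l\|^2\le s\}$.

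The heart of the argument is the level‑set estimate
\begin{equation*}
\mathrm{meas}\bigl(\mathcal G(s)\bigr)\ \le\ C_k\,\sqrt{s/n}\;\frac{R_k}{n^{2k}}\qquad(0<s\le c_0 n),
\end{equation*}
which is where the sharp square root enters; ultimately it reflects that the $a_l$ are nonzero integers, so $\sum_l a_l^2\ge n$ pins the ``major arc'' of $S$ to width $O(n^{-1/2})$. I would prove it in two steps. First, on $\mathcal G(s)$ one has $\mathrm{Re}\,S(\eta)=n-2\sum_l\sin^2(\pi\eta a_l)\ge n-2\pi^2 s$, so $|S(\eta)|\ge n/2$ there once $s\le c_0 n$ ($c_0$ a small absolute constant); combined with $R_k=\int_0^1|S|^{2k}$, this already gives the crude bound $\mathrm{meas}(\mathcal G(s))\le 2^{2k}R_k/n^{2k}$. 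Second, to gain the $\sqrt{s/n}$, note that $\mathcal G(s)$ is approximately closed under addition: the triangle inequality for $\|\cdot\|$ and Cauchy--Schwarz give $\mathcal G(s)+\dots+\mathcal G(s)\subseteq\mathcal G(L^2 s)$ for the $L$‑fold sumset. Choosing $L$ of order $\sqrt{n/s}$, so that $L^2 s\le c_0 n$ (which also forces $\mathrm{meas}(\mathcal G(L^2 s))<1$, since $\sum_l\|\eta a_l\|^2$ has average $n/12>c_0 n$), the crude bound applies to $\mathcal G(L^2 s)$ and gives $\mathrm{meas}\bigl(\mathcal G(s)+\dots+\mathcal G(s)\bigr)\le 2^{2k}R_k/n^{2k}$, while Kneser's theorem on the circle --- whose stabilizer term vanishes because this sumset has measure below $1$ --- gives $\mathrm{meas}\bigl(\mathcal G(s)+\dots+\mathcal G(s)\bigr)\ge\min\{1,\,L\,\mathrm{meas}(\mathcal G(s))\}$. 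Comparing the two forces $L\,\mathrm{meas}(\mathcal G(s))\le 2^{2k}R_k/n^{2k}$, which is the claimed estimate. Feeding it back (with $s=w/8$) yields $\int_0^{c_1 n}\tfrac14 e^{-w/4}\,\mathrm{meas}\{W<w\}\,dw\le C_k'\,n^{-2k-1/2}R_k\int_0^{\infty}e^{-w/4}\sqrt w\,dw=O_k(n^{-2k-1/2}R_k)$, and together with the exponentially small tail this completes the proof.

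I expect the main obstacle to be precisely this level‑set estimate. The one‑line bound $\mathrm{meas}(\mathcal G(s))\le 2^{2k}R_k/n^{2k}$ only delivers the non‑sharp $\rho=O(n^{-2k}R_k)$, and recovering the extra $n^{-1/2}$ genuinely requires exploiting the near‑group structure of $\mathcal G(s)$ via the sumset/Kneser step above. Note also that one cannot afford the softer route of symmetrizing, $\rho^2\le\P(S=S')$, and then applying a single $2k$‑th moment estimate to the lazy symmetrized sum, since that step discards exactly this square root; the optimality of the conclusion for $a_1=\dots=a_n$ (where $R_k=n^{2k}$ and $\rho\sim n^{-1/2}$) shows no such slack is available.
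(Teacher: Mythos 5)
Your proof is correct, and it supplies a complete argument where the paper gives none: the paper quotes this as a known result of Hal\'asz (``implicit in [Hal], see also [TVadd]'') without proof. What you have written is essentially the classical Hal\'asz Fourier argument in the form it appears in Tao--Vu's book, which is exactly what the paper is pointing to.

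The logic all checks out. The Freiman isomorphism of order $2n^2$ to $\{0,a_1,\dots,a_n\}$ preserves both the concentration $\rho$ (which depends only on coincidences among subset sums, additive relations of length at most $2n$) and $R_k$ (length $2k$), and maps nonzero $a_i$ to nonzero integers. The Fourier inversion, the identity $R_k=\int_0^1|S(\xi)|^{2k}\,d\xi$, and the reduction to $\int_0^1 e^{-W(\eta)/4}\,d\eta$ via $\ln\cos^2\theta\le-\sin^2\theta$ and $\eta=2\xi$ are all standard and correct. The passage $\{W<w\}\subseteq\mathcal G(w/8)$ via $\sin^2(\pi t)\ge 4\|t\|^2$ is right, and the crude level-set bound $\mathrm{meas}(\mathcal G(s))\le 2^{2k}R_k/n^{2k}$ (from $\mathrm{Re}\,S\ge n-2\pi^2 s\ge n/2$ when $s\le c_0 n$) is correct. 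The Cauchy--Schwarz sumset inclusion $L\cdot\mathcal G(s)\subseteq\mathcal G(L^2 s)$ is fine, as is the observation that $\mathcal G(c_0 n)$ has measure strictly below $1$ once $c_0<1/12$ because the mean of $\sum_l\|\eta a_l\|^2$ is $n/12$. Your appeal to ``Kneser's theorem on the circle'' is really Raikov's inequality, $\mu^*(A+B)\ge\min\{1,\mu(A)+\mu(B)\}$ on $\R/\Z$ (Kneser's stabilizer refinement is automatically vacuous on a connected compact group), but the statement you use is correct, it iterates to $L$-fold sumsets, and combined with the sumset inclusion and the crude bound it delivers $\mathrm{meas}(\mathcal G(s))\le C_k\sqrt{s/n}\,R_k/n^{2k}$. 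The final layer-cake integration, and the treatment of the tail $w\ge c_1 n$ using $R_k\ge n^k$, are both fine. You have also correctly diagnosed where the sharp $n^{-1/2}$ enters: the Raikov/sumset step is precisely what upgrades the crude level-set bound, and a moment-only argument would lose it.
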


Combining the above result and the union bound, we can write
\begin{eqnarray*}
& & \P(E_q) \leq \sum_{(u,v)} \P(\sum_{i=1}^n (a_i v - b_i u)x_i=0) \\
&=& O(n^{-2k-1/2}) \sum_{(u,v)} \sum_{(i_1, \dots i_k) \atop (j_1, \dots j_k)} \chi(v(a_{i_1}+\dots+a_{i_k}-a_{j_1}-\dots-a_{j_k})=-u(b_{i_1}+\dots+b_{i_k}-b_{j_1}-\dots-b_{j_k})) \\
&=&O(n^{-2k-1/2})  \sum_{(i_1, \dots i_k) \atop (j_1, \dots j_k)} \sum_{(u,v)} \chi(v(a_{i_1}+\dots+a_{i_k}-a_{j_1}-\dots-a_{j_k})=-u(b_{i_1}+\dots+b_{i_k}-b_{j_1}-\dots-b_{j_k}))
\end{eqnarray*}
where the sum is taken over all pairs $(u,v)$ such that $0\leq u<q$, $|v|\leq q$, $GCD(u,v)=1,$ and at least $\frac{n}{10}$ different $i$ satisfy $ub_i \neq va_i$.  This last assumption guarantees that the linear form in the first inequality has at least $0.1n$ nonzero coefficients for every $(u,v)$ we are summing over, so that the Hal\'{a}sz bound above will be sufficiently strong.    

In the final term in the above bound, the inner summand is at most $1$ unless 
\begin{equation*} 
 (a,b)_{i_1}+(a,b)_{i_2}+\dots+(a,b)_{i_k}=(a,b)_{j_1}+(a,b)_{j_2}+\dots+(a,b)_{j_k},
\end{equation*}
an equation which has at most $L_k n^k$ solutions.    

It follows that 
\begin{equation*}
 \P(E_q)=O(q^2 n^{-k-1/2} L_k+n^{-1/2})
\end{equation*}
which by our assumptions on $L_k$ is $O(\frac{q^{1+\frac{1}{2k+1}}}{\sqrt{n}})=O(\frac{q}n^{\alpha-\frac{1}{2}})$  

\textbf{Case 2: $L_j \geq \frac{n}{q^{\frac{2}{2k+1}}}L_{j-1}$} We know that each variable can be involved in at most $O_j( L_{j-1})$ different $j-$tuples which sum to the same value.  It follows that in this case for some absolute constant $C_j$ we can find a collection $S$ of $C_j\frac{n}{q^{\frac{2}{2k+1}}}$ disjoint $j-$tuples, each of which has coefficients summing to the same (fixed and non-random) pair $(c,d)$.  By our assumption on the $b_i$, and $a_i$, we know that either $d$ is positive or $d=0$ and $c$ is positive.  In particular, we know that at least one of $c$ and $d$ is nonzero.   

Define a $j-$tuple $(i_1,\dots i_j)$ to be \textbf{agreeable} if $x_{i_1}=x_{i_2}=\dots=x_{i_j}$.  Note that each tuple has a constant probability $2^{1-j}$ of being agreeable.  Let $S'$ be the collection of tuples in $S$ which are agreeable, and let $B$ be the event that $|S'| \geq 2^{-j} |S|$.  We have 
\begin{equation*}
 \P(E_q) \leq \P(\neg B)+\P(E_q \wedge B)
\end{equation*}
Note that the agreeability of each tuple in $S$ is an independent event due to our assumption that the tuples are disjoint.  It follows by Chernoff's bound that $\P(\neg B)=o(n^{-1/2})$.  We therefore focus on the second term.  

To bound $\P(E_q \wedge B)$, we will expose the variables by first exposing $S'$, then exposing the value of all the variables not involved in a tuple in $S'$.  We will then finally expose the values of the variables in $S'$.   

We have for any tuple that
\begin{equation*}
 \P(\sum_{i=1}^j (a_{i_j}, b_{i_j}) =(c,d) | (i_1, \dots i_j) \textrm{ agreeable })=1/2
\end{equation*}
and the same for $(-c,-d)$.  It follows that, treating the set $S'$ and the value of $x_j$ for variables not in $S'$ as fixed,
\begin{equation*}
 \frac{\sum_{i=1}^n a_i x_i}{\sum_{i=1}^n b_i x_i} \sim \frac{c \sum_{j=1}^{|S'|} y_j +z_1}{d \sum_{j=1}^{|S'|} y_j + z_2},
\end{equation*}
where $z_1$ and $z_2$ are fixed constants and the $y_i$ are independent $\pm 1$ variables.  By paying at most a constant multiplicative factor and an exponentially small additive factor in the probability, we may replace the sum of the $y_j$ by a uniform distribution on $[-2\sqrt{|S'|}, 2\sqrt{|S'|}]$.  We are thus essentially reduced to bounding the probability that $\frac{az+b}{cz+d}$ can be written as a fraction with low numerator and denominator.  We will soon show:
\begin{lemma} \label{lemma:azplusb}
  Let $n \geq 1$ be an integer, and let $a, b, c, d$ be real numbers (which may depend on $n$) such that $ad \neq bc$.  Let $\alpha>0$ be any fixed parameter.  Then for any $1 \leq q \leq n$, there are at most $qn^{\alpha}$ integers $z \in \{1, \dots n\}$ such that 
\begin{equation*}
 h(z):=\frac{az+b}{cz+d}
\end{equation*}
has height at most $q$ (has numerator and denominator at most $q$ in absolute value when written in lowest terms).  
 \end{lemma}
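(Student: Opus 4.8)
The plan is to translate the count into a divisor‑counting estimate after first replacing $a,b,c,d$ by integers of polynomially bounded size. Write $Z_q$ for the set of $z\in\{1,\dots,n\}$ for which $h(z)$ has height at most $q$, and for $z\in Z_q$ write $h(z)=p_z/q_z'$ in lowest terms with $q_z'>0$, so $\gcd(p_z,q_z')=1$ and $|p_z|,q_z'\le q$. Since $ad\neq bc$, $h$ is a M\"obius transformation, hence injective, so the values $h(z)$ are distinct and $z\mapsto(p_z,q_z')$ is injective on $Z_q$. If $|Z_q|\le 2$ there is nothing to prove, so I may fix $z_1<z_2<z_3$ in $Z_q$.

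First I would normalize the coefficients. The relations $(cz_i+d)p_i=(az_i+b)q_i'$ for $i=1,2,3$ are homogeneous and linear in $(a,b,c,d)$ with integer coefficients of absolute value at most $nq$. This $3\times 4$ system has rank $3$: any solution not proportional to $(a,b,c,d)$ would give a second M\"obius transformation agreeing with $h$ at $z_1,z_2,z_3$ (and subtracting the two yields a nonzero rational function of degree $\le 2$ with three roots, impossible), while a degenerate solution $(a',b',c',d')$ with $a'd'=b'c'$ is either identically $\infty$ (forcing $a'=b'=c'=d'=0$ through the three relations) or constant (which cannot meet the three distinct values $p_i/q_i'$). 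Hence the solution space is one‑dimensional over $\mathbf Q$, spanned by a primitive integer vector whose coordinates are, up to sign, the $3\times 3$ minors of the coefficient matrix, and so are bounded by $6(nq)^3\le 6n^6$. Because $h$ is invariant under rescaling $(a,b,c,d)$, from now on I take $a,b,c,d\in\Z$ with $M:=\max(|a|,|b|,|c|,|d|)\le 6n^6$; then $e:=ad-bc$ is a nonzero integer with $|e|\le 2M^2\le 72\,n^{12}$.

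The heart of the argument is the following. For $z\in Z_q$ set $g_z:=\gcd(az+b,\,cz+d)$ (at least one of these integers is nonzero, otherwise $ad=bc$). Then $g_z$ divides $a(cz+d)-c(az+b)=e$, and since $h(z)=\frac{(az+b)/g_z}{(cz+d)/g_z}$ is reduced we get $q_z'=|cz+d|/g_z$ and $|p_z|=|az+b|/g_z$; the height bound thus forces $|cz+d|\le q\,g_z$ and $|az+b|\le q\,g_z$. Now fix a positive divisor $g$ of $|e|$, and suppose $c\neq 0$ (if $c=0$ then $a\neq 0$ and the symmetric estimate using $a$ applies). The integers $z$ with $g\mid(cz+d)$ form an arithmetic progression of common difference $g/\gcd(c,g)\ge g/|c|$, and the constraint $|cz+d|\le qg$ confines $z$ to an interval of length $2qg/|c|$, so at most $2q\gcd(c,g)/|c|+1\le 2q+1$ of them occur. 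Summing over the $\tau(|e|)$ positive divisors of $|e|$ gives $|Z_q|\le(2q+1)\,\tau(|e|)$.

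Finally, since $|e|\le 72\,n^{12}$, the standard divisor bound $\tau(m)=O_\epsilon(m^\epsilon)$ applied with $\epsilon=\alpha/12$ yields $\tau(|e|)=O_\alpha(n^\alpha)$, hence $|Z_q|=O_\alpha(qn^\alpha)$, as claimed. I expect the normalization to be the delicate step: $a,b,c,d$ are only assumed real, so before any arithmetic is available one must manufacture a genuinely small integral representative of $h$, which requires ruling out a rank drop in the $3\times 4$ system — exactly where injectivity of $h$ and the distinctness of $h(z_1),h(z_2),h(z_3)$ are used. Everything afterward is elementary congruence‑and‑interval counting plus the classical divisor bound.
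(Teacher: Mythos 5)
Your proof is correct, and it takes a genuinely different and somewhat cleaner route than the paper's. The paper (implicitly treating $a,b,c,d$ as integers with no common prime factor) sets $\Delta=|ad-bc|$ and splits on whether $\tau(\Delta)$ is small or large: if small, it buckets $z$ dyadically by the size of $|az+b|$ and does divisor counting via the observation that $v\mid\gcd(az_i+b,cz_i+d)$ for $i=1,2$ forces $v\mid z_1-z_2$; if large, it notes $\Delta$ must be enormous and runs an $\mathrm{lcm}$-inequality argument (with two further sub-cases on $|a|$ versus $\Delta^{2/5}$) to show at most three admissible $z$ fall in any short interval. You instead begin with an explicit reduction to the integer case: given three admissible $z_i$, the relations $(cz_i+d)p_i=(az_i+b)q_i'$ form a rank-$3$ homogeneous system over $\Z$ whose kernel is one-dimensional, so $(a,b,c,d)$ is proportional to an integer vector of size $O(n^6)$ (the $3\times3$ minors), hence $|e|=|ad-bc|=O(n^{12})$. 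From there a single divisor-count suffices: each admissible $z$ contributes a divisor $g=\gcd(az+b,cz+d)$ of $e$, and for fixed $g$ the constraints $g\mid(cz+d)$, $|cz+d|\le qg$ allow at most $2q+1$ values of $z$, giving $|Z_q|\le(2q+1)\tau(|e|)=O_\alpha(qn^\alpha)$ by the classical divisor bound. The advantage of your route is that it eliminates the case split and the $\mathrm{lcm}$ analysis entirely, and it makes rigorous the passage from real to integer coefficients, which the paper's proof leaves implicit (the paper's opening assumption that no prime divides all of $a,b,c,d$ presupposes they are integers, whereas the lemma permits arbitrary reals). One cosmetic note: the conclusion you reach is $O_\alpha(qn^\alpha)$ rather than the literal bound $qn^\alpha$ stated in the lemma, but this matches what the paper's own argument produces and is what the application requires.
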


Assuming Lemma \ref{lemma:azplusb} to be true, we know that for fixed $z_1, z_2$ the probability that this fraction can be written as $\frac{u}{v} \neq \frac{c}{d}$ is at most $\frac{q n^{\frac{1}{3k}}}{\sqrt{|S'|}}$.  Taking expectations over all $z_1, z_2, S'$ and using our bounds on $S'$ under the assumption that $B$ holds gives that 
\begin{equation*}
\P(E_q \wedge B) \leq \frac{q^{1+\frac{1}{2k+1}+\frac{1}{3k}}}{\sqrt{n}}+\P(\frac{\sum a_i}{\sum b_i}=\frac{c}{d} \wedge da_i-ca_i \neq 0 \textrm{ for} \frac{n}{10} \textrm{ different }i) 
\end{equation*}
The second term on the right side corresponds to a linear form with $\frac{n}{10}$ nonzero coefficients, so is $O(n^{-1/2})$.  Again the result follows.  

It remains to prove Lemma \ref{lemma:azplusb}.
\subsection{The proof of Lemma \ref{lemma:azplusb}} \footnote{Many of the key ideas in the proof of Lemma \ref{lemma:azplusb} are due to Ernie Croot}

We may without loss of generality assume that $|a| \geq |c|$.  We will further assume without loss of generality that no prime divides all of $a,b,c,d$.    

Let $\Delta=|ad-bc|>0$.  Note that any common divisor of $az+b$ and $cz+d$ is also a common divisor of $|a(cz+d)-c(az+b)|=\Delta$.  Let $\tau(\Delta)$ be the number of divisors of $\Delta$.  We will split into two cases.

\textbf{Case 1:} $\tau(\Delta)<n^{\alpha/2}$.  For $0 \leq i \leq (\alpha+1) \log_2 n$, let $S_i$ denote the set of $z \in \{1, \dots n\}$ such that $|az+b| \in [2^i, 2^{i+1}]$.  It is clear that each $S_i$ lies in the union of two intervals, each of which has size at most $2^{i}$.   For any $z \in S_i$ such that $h(z)$ has height at most $q$, it must be the case that $az+b$ shares a divisor $v$ with $cz+d$ and $\Delta$ such that $v>2^i/q$.   We next claim that for any given $v$, there are not many $v$ for which this can occur, as:
\begin{claim}
 If $v | GCD(az_1+b, cz_1+d)$ and $v | GCD(az_2+b, cz_2+d)$, then $v|z_1-z_2$.  
\end{claim}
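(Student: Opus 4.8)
The plan is to verify the claim prime by prime, exploiting the normalization made at the start of the proof of Lemma \ref{lemma:azplusb} that no prime divides all of $a,b,c,d$ (and, without loss of generality, that $v>0$). Write $v=\prod_p p^{e_p}$ as a product of prime powers. Since these prime powers are pairwise coprime, it suffices to show that $p^{e_p}\mid z_1-z_2$ for every prime $p$ dividing $v$, and then multiply the conclusions together.

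So fix a prime $p\mid v$ and set $e=e_p\ge 1$. From $v\mid az_1+b$, $v\mid cz_1+d$, $v\mid az_2+b$, $v\mid cz_2+d$ we obtain, by subtracting the corresponding pairs, $p^e\mid a(z_1-z_2)$ and $p^e\mid c(z_1-z_2)$. The key observation is that $p$ cannot divide both $a$ and $c$: if $p\mid a$, then $p\mid az_1+b$ forces $p\mid b$, and if $p\mid c$, then $p\mid cz_1+d$ forces $p\mid d$; so $p$ dividing both $a$ and $c$ would make $p$ a common prime factor of all of $a,b,c,d$, contradicting the normalization. Hence $p\nmid a$ or $p\nmid c$. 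In the first case $p^e$ is coprime to $a$, so $p^e\mid a(z_1-z_2)$ gives $p^e\mid z_1-z_2$; the second case is identical with $c$ in place of $a$.

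Combining over all primes $p\mid v$ gives $v\mid z_1-z_2$, which is the claim. I do not expect any genuine obstacle: this is an elementary divisibility argument, and the only point requiring care is to invoke the ``no common prime factor'' reduction in order to rule out the degenerate possibility that $p$ divides both leading coefficients $a$ and $c$. (Note that the cruder estimate $v\mid a(z_1-z_2)$ and $v\mid c(z_1-z_2)$ only yields $v\mid \gcd(a,c)\,(z_1-z_2)$ directly, which is why it is cleanest to work one prime at a time and use the normalization.)
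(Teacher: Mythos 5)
Your proof is correct and follows essentially the same argument as the paper: work one prime power $p^e\|v$ at a time, observe that $p$ cannot divide both $a$ and $c$ (else it would also divide $b$ and $d$, contradicting the no-common-prime normalization), and deduce $p^e\mid z_1-z_2$ from whichever of $p^e\mid a(z_1-z_2)$ or $p^e\mid c(z_1-z_2)$ has a unit leading coefficient mod $p$.
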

\begin{proof}
 Let $p$ be a (fixed) prime dividing $v$, and let $p^m$ be the largest power of $p$ dividing $v$.  If $p$ does not divide $a$, then $p^m$ must divide $z_1-z_2$, since $v|(az_1+b)-(az_2+b)=a(z_1-z_2)$.  Similarly, either $p^m$ divides $z_1-z_2$ or $p$ also divides $c$.  However, $p$ cannot divide both $a$ and $c$, for it would then follow that $p$ also divided $(az_1+b)-az_1=b$ and $d$, violating our assumption that $a,b,c,d$ shared no common factor.  Therefore it must be the case that $p^m|z_1-z_2$.  But this is true for any prime, so we are done.    
\end{proof}
It follows that for a given $v$, there are at most $2^{i+1}/v$ choices of $z$ for which $v$ provides the required cancellation.    Adding up over all $v$, we see that the number of $z \in S_i$ which lead to a height of at most $q$ is at most 
\begin{equation*}
 \sum_{v|\Delta \atop v>2^i/q} \left( \frac{2^i}{v}+1 \right) \leq (q+1)\tau(\Delta) \leq (q+1)n^{\alpha/2}.    
\end{equation*}
Adding up over all $S_i$, we see that the lemma holds in this case.

\textbf{Case 2:} $\tau(\Delta)>n^{\alpha/2}$.  In this case it follows from classical number theoretic bounds on the number of divisors of an integer that $\Delta>2^{\omega(n)}$ for some $\omega(n)$ tending to infinity with $n$.  

Recall that we are assuming that $|a|>|c|$, so in particular $a$ is non-zero.  By paying an (additive) factor of at most $2n^{\alpha/2}$, we may therefore only consider values where $|az+b| \geq n^{\alpha/2}$.

The result will follow immediately if we can show that for any interval of length at most $n^{1-\alpha/2}/q$, there can be at most three such values of $z$ in that interval for which that $h(z)$ has height at most $q$.  Let us then assume to the contrary that there are four values $z_1, z_2, z_3, z_4$ in such an interval for which $h(z)$ has height at most $q$.  

Let $u_i=az_i+b$, let $v_i=cz_i+d$, and let $h(z)=u_i'/v_i'$ be written in lowest terms.  We next make the following claim:

\begin{claim}
 Let $u_i, u_i', v_i, v_i', z_i$ be as above.  Then
\begin{equation*}
 \frac{u_1 u_2 u_3 u_4}{\prod_{1 \leq i <j \leq 4} |z_i-z_j|} \leq lcm(u_1, u_2, u_3, u_4)
\end{equation*}
\end{claim}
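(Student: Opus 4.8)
The plan is to exploit the divisibility relations linking the $u_i$ to the discriminant $\Delta=|ad-bc|$, which is enormous here: being in Case~2 means $\tau(\Delta)>n^{\alpha/2}$, which forces $\Delta$ to be super-polynomial in $n$. Write $g_i=GCD(u_i,v_i)$ and $u_i=g_iu_i'$, $v_i=g_iv_i'$ with $GCD(u_i',v_i')=1$, so that $h(z_i)=u_i'/v_i'$ is the reduced form of $h(z_i)$ and $|u_i'|,|v_i'|\le q$ by the height hypothesis. First I would record three elementary facts. (i) Since $av_i-cu_i=a(cz_i+d)-c(az_i+b)=ad-bc$ for every $i$, each $g_i$ divides $ad-bc$, and in particular $g_i\mid\Delta$. (ii) By the Claim just proved, $GCD(g_i,g_j)\mid z_i-z_j$: apply that Claim to $v=GCD(g_i,g_j)$, a common divisor of $GCD(az_i+b,cz_i+d)=g_i$ and of $g_j$. (iii) The cross-ratio identity $\frac{u_i'}{v_i'}-\frac{u_j'}{v_j'}=\frac{(ad-bc)(z_i-z_j)}{v_iv_j}$, which after clearing the common factor $v_i'v_j'$ becomes $u_i'v_j'-u_j'v_i'=\frac{(ad-bc)(z_i-z_j)}{g_ig_j}$.

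The heart of the argument is to use (iii) and the height bound to force each reduced numerator $u_i'$ to be $\pm1$, i.e.\ $u_i=\pm g_i$ (equivalently $u_i\mid v_i$). From (iii), the integer $u_i'v_j'-u_j'v_i'$ is nonzero (else $(ad-bc)(z_i-z_j)=0$, impossible) and has absolute value at most $|u_i'||v_j'|+|u_j'||v_i'|\le 2q^2$, whence $g_ig_j=\frac{\Delta|z_i-z_j|}{|u_i'v_j'-u_j'v_i'|}\ge\frac{\Delta}{2q^2}$ for every pair $i\ne j$. Multiplying over the six pairs gives $(g_1g_2g_3g_4)^3\ge(\Delta/2q^2)^6$, hence $g_1g_2g_3g_4\ge(\Delta/2q^2)^2$. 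Since $q\le\sqrt n$ while $\Delta$ is super-polynomial in $n$, the product $\prod_i g_i$ dwarfs every fixed power of $n$; combined with $|u_i|=g_i|u_i'|$, $|u_i'|\le q\le\sqrt n$, and the control on the sizes of the $u_i=az_i+b$ (through $a$, $b$, and $z_i\in\{1,\dots,n\}$), this pins every $u_i'$ to $\pm1$. I expect this last implication --- reconciling the super-polynomial lower bound on $\prod_i g_i$ with the available upper bounds on the $|u_i|$ --- to be the main obstacle, and it is exactly here that the Case~2 hypotheses (the size of $\Delta$, the lower bound $|u_i|\ge n^{\alpha/2}$, the interval of length at most $n^{1-\alpha/2}/q$, and $q\le\sqrt n$) are all needed at once.

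Granting $u_i=\pm g_i$, the remainder is purely elementary. Then (taking absolute values, which is harmless) $\prod_i|u_i|=\prod_i g_i$ and $lcm(u_1,u_2,u_3,u_4)=lcm(g_1,g_2,g_3,g_4)$. Fixing a prime $p$ and listing the exponents $f_i=v_p(g_i)$ so that $f_1\le f_2\le f_3\le f_4$, one has $v_p\!\left(\frac{\prod_i g_i}{lcm(g_i)}\right)=(f_1+f_2+f_3+f_4)-f_4=f_1+f_2+f_3$, whereas $v_p\!\left(\prod_{i<j}GCD(g_i,g_j)\right)=\sum_{i<j}\min(f_i,f_j)=3f_1+2f_2+f_3\ge f_1+f_2+f_3$. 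Hence $\frac{\prod_i g_i}{lcm(g_i)}\le\prod_{i<j}GCD(g_i,g_j)$, and by (ii) the right-hand side divides, and so is at most, $\prod_{i<j}|z_i-z_j|$. Rearranging gives $\prod_i u_i\le lcm(u_1,u_2,u_3,u_4)\,\prod_{i<j}|z_i-z_j|$, which is exactly the Claim.
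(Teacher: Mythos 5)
Your prime-exponent computation verifying $\frac{\prod_i g_i}{\mathrm{lcm}(g_1,\dots,g_4)}\le\prod_{i<j}\mathrm{GCD}(g_i,g_j)$ is correct, and it holds for \emph{any} four positive integers, so in particular you could apply it directly to $u_1,\dots,u_4$. The paper does exactly that: it observes $\mathrm{GCD}(u_i,u_j)\mid u_i-u_j=a(z_i-z_j)$, and since $\mathrm{GCD}(a,b)=1$ forces $\mathrm{GCD}(u_i,a)=1$ (a prime dividing both $a$ and $u_i=az_i+b$ would divide $b$), it gets $\mathrm{GCD}(u_i,u_j)\mid z_i-z_j$ directly, and then the claim drops out in two lines. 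You instead only have $\mathrm{GCD}(g_i,g_j)\mid z_i-z_j$ from the earlier Claim (which needs $v$ to divide \emph{both} numerator and denominator), and to upgrade from the $g_i$'s to the $u_i$'s you introduce the unproved assertion $u_i'=\pm1$, i.e.\ $u_i=\pm\,\mathrm{GCD}(u_i,v_i)$.

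That assertion is the gap, and it does not close. You acknowledge it as ``the main obstacle'' and sketch a plan, but the plan cannot succeed: your pairwise bound $g_ig_j\ge\Delta/(2q^2)$ gives a large lower bound only on the \emph{product} $\prod_i g_i$, not on each individual $g_i$, so an individual $|u_i'|=|u_i|/g_i$ need not be small; moreover in Case~2a the coefficient $a$ (and hence the $|u_i|$) can be as large as $\Delta^{2/5}$, so no upper bound on $\prod_i|u_i|$ of the required strength is available. The claim $u_i'=\pm1$ is generically false, and in any event the Claim is invoked \emph{before} the split into Cases~2a/2b, so its proof cannot lean on hypotheses that hold only in one subcase. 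The repair is simple: drop the $u_i=\pm g_i$ detour entirely, prove $\mathrm{GCD}(u_i,u_j)\mid z_i-z_j$ by the direct coprimality argument above, and then your own LCM/GCD inequality (now applied to the $u_i$) finishes the proof exactly as in the paper.
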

\begin{proof}
Since $GCD(u_i, u_j)|[(a z_i+b)-(az_j+b)]=a(z_i-z_j)$, and $GCD(a,b)=1$ by assumption, it follows that $GCD(u_i, u_j)|z_i-z_j$.  We therefore have 
\begin{equation*}
 \frac{u_1 u_2 u_3 u_4}{\prod_{1 \leq i <j \leq 4} |z_i-z_j|} \leq \frac{u_1 u_2 u_3 u_4} {\prod_{1 \leq i <j \leq 4} GCD(u_i, u_j)} \leq lcm(u_1, u_2, u_3, u_4)
\end{equation*}
\end{proof}

Combining this with the observation that $lcm(u_1, u_2, u_3, u_4) | \Delta lcm(u_1', u_2', u_3', u_4')$, we see that 
\begin{equation} \label{eqn:lcmbound}
 q^4 \geq lcm(u_1', u_2', u_3', u_4') \geq \frac{u_1 u_2 u_3 u_4}{\Delta \prod_{1 \leq i <j \leq 4} |z_i-z_j|}
\end{equation}

We now divide into two further cases depending on the size of $a$ relative to $\Delta$.  

\textbf{Case 2a:} $|a| \geq \Delta^{2/5}$.  Then 
\begin{equation*}
|u_1 u_2 u_3 u_4| \geq a^3 \geq q^4 n^4 \Delta \geq q^4 \Delta \prod_{1 \leq i < j \leq 4} |z_i-z_j|,
\end{equation*}
which is a contradiction to \eqref{eqn:lcmbound}.  

\textbf{Case 2b:} $|a|, |c| \leq \Delta^{2/5}$.  Let $M$ be the larger of $|b|$ and $|d|$.  It follows from our bounds on $a$ and $c$ that $M \geq \frac{1}{2} \Delta^{3/5}$.  

Note that for any $z$ in our range we have 
\begin{equation*}
\max\{|az+b|, |cz+d|\} \geq M-\sqrt{n}{\Delta^{2/5}} \geq \frac{M}{2}, 
\end{equation*}
where we are here using our lower bounds on both $M$ and $\Delta$.  It follows that $GCD(az_1+b, cz_1+d) \geq \frac{1}{2} M n^{-1}$, from which we know that 
\begin{equation*}
 |b| \geq \frac{1}{2}M n^{-1}-|az_1| \geq \frac{1}{3} M n^{-1} 
\end{equation*}
and a similar statement for $|d|$.  In other words, both $b$ and $d$ would have to be much larger than both $|a|$ and $|c|$.  This in turn would imply
\begin{equation*}
 |\frac{az+b}{cz+d} - \frac{b}{d}|=|\frac{\Delta z}{bd}| \leq \Delta^{-1/5} n^2 < \frac{1}{2 n^2}. 
\end{equation*}
But an interval of width less than $\frac{1}{n^2}$ can only contain at most one fraction of height less than $n$, since any nonzero difference between two such fractions is at least that large.  We again reach a contradiction.

\subsection{The proof of Lemma \ref{lemma:neighborstruc} for $k>2$}
Let $(v_1, \dots v_k)$ be a neighborly tuple.  We first modify the definition of Commensurability slightly, writing
\begin{equation*}
 Comm^*(a_1, \dots , a_k) = Comm(a_1, \dots a_k) \chi(\prod_{i=1}^k a_i \neq 0).
\end{equation*}
We have by Theorem \ref{thm:quantLO} and the fact the Commensurability is always at most 1 that
\begin{eqnarray*}
 \E_y(Comm^* (v_1^T y, \dots v_k^T y)) &\geq& \E_y (Comm(v_1^T y, \dots v_k^T y)) - \P(\textrm{ some } v_i^Ty=0) \\
&\geq& \E_y(Comm(v_1^T y, \dots v_k^T y)) - k r^{-1/2} \\
&\geq& \frac{1}{12}  r^{-\frac{1}{2}+\frac{5\epsilon}{8}}
\end{eqnarray*}
The advantage to this truncated commensurability is that we have the relationship
\begin{equation*}
 Comm^*(a_1, \dots , a_k) \geq \frac{1}{R} \Leftrightarrow \frac{a_1}{z_1}=\frac{a_2}{z_2} \dots = \frac{a_k}{z_k}
\end{equation*}
for some integers $z_1 \dots z_k$ which are at most $R$ in absolute value.   

As in the $k=2$ case, we have 
\begin{eqnarray} \label{eqn:modcomm} 
\E_y(Comm^*(v_1^T y, \dots , v_k^Ty)) &\leq& (\int_{t=1}^{r^{\frac{1}{2}-\frac{\epsilon}{4}}} \frac{p_{v}(q)}{q^2} dq)+r^{-\frac{1}{2}+\frac{\epsilon}{4}} \\
& & +\P(\frac{v_1^T y}{l_1}=\frac{v_2^T y}{l_2}=\dots=\frac{v_k^T y}{l_k} \textrm{ for a degenerate } l) \nonumber,
\end{eqnarray}
where 
\begin{equation*}
p_{v}(q):= \P( \exists l=(l_1, \dots l_k) :  \textrm{l is non-degenerate } \wedge \frac{a_i^T y}{l_i} \textrm{all equal } \wedge |l_i| \leq q),
\end{equation*}
and a $k-$tuple $(l_1, \dots l_k)$ is degenerate if $(l_i, l_j)$ is degenerate for $(v_i, v_j)$ for every $i$ and $j$.  Note that a given $(v_1, \dots v_k)$ again has (up to multiples) only one degenerate $l$.

It follows from the proof of the $k=2$ case that for any particular $(i,j)$, the contribution to $p_{v}(q)$ from those tuples where $(l_i, l_j)$ is nondegenerate is $O(\frac{q}{r^{1/2-\alpha}})$ for any $\alpha$.  Adding up over all pairs, it follows that $p_v(q)=O(\frac{k^2q}{r^{1/2-\alpha}})$.  As in the $k=2$ case, we now have   
\begin{equation*}
\int_{t=1}^{r^{\frac{1}{2}-\frac{\epsilon}{4}}} \frac{p_{v}(q)}{q^2} dq = O(k^2 r^{-1/2+\alpha} \ln r)=o(r^{-\frac{1}{2}+\frac{5\epsilon}{8}}).
\end{equation*}
 by taking $\alpha$ to be sufficiently small.  Again the contributions from the first two terms on the right hand side of \eqref{eqn:modcomm} are small, so the last term must be large, that is to say
\begin{equation} \label{eqn:kdegen}
 \P(\frac{v_1^T y}{l_1}=\frac{v_2^T y}{l_2}=\dots=\frac{v_k^T y}{l_k}) \geq \frac{1}{14} r^{-\frac{1}{2}+\frac{5\epsilon}{8}}
\end{equation}
Let $d_j=\frac{l_1}{l_j}$, and $S_j$ to be the places where $v_1$ differs from $d_j v_j$.  We can rewrite \eqref{eqn:kdegen} as the system 
\begin{eqnarray*}
 \sum_{i \in S_2} (d_2 v_2(i)-v_1(i))x_i &=& 0 \\
 \sum_{i \in S_3 \backslash S_2} (d_3 v_3(i) -v_1(i)) x_i &=& - \sum_{i \in S_2} (d_3 v_3(i)-v_1(i)) x_i \\
\vdots & & \vdots \\
\sum_{i \in S_k \atop i \notin S_2 \cup \dots \cup S_{k-1}} (d_k v_k(i)-v_1(i)) x_i &=& -\sum_{i \in S_2 \cup \dots \cup S_{k-1}} (d_k v_k(i) - v_1(i)) x_i. 
\end{eqnarray*}
We now successively expose the variables in $S_j \backslash S_2 \cup \dots S_{j-1}$ for each $j$ and examine each equation in turn.

After we expose the variables in $S_2$, the probability that the first equation above holds is at most $|S_2|_1^{-1/2}$ by Theorem \ref{thm:quantLO}.   We now treat the variables in $S_2$ as fixed, meaning that the right hand side of the second equation above is constant, and expose those in $S_3 \backslash S_2$.  For any particular value of the variables in $S_2$, it again follows from Theorem \ref{thm:quantLO} that the probability that the second equation holds is at most $|S_3 \backslash S_2|_1^{-1/2}$.  Continuing onwards through the entire system, we have that the probability that the above system  holds is at most 
\begin{equation*}
 \prod_{j=2}^k |S_j \backslash \bigcup_{i=1}^{j-1} S_i|_1^{-1/2}.  
\end{equation*}
The lemma follows by combining this with \eqref{eqn:kdegen}.

 \subsection{The proof of Lemma \ref{lemma:atypical}}
This proof will follow along very similar lines to that of Lemma \ref{lemma:typical}.  

Again we let $k_0:=\lfloor \log^7 r \rfloor$, and the argument will make use of the following analogue of neighborliness:
\begin{definition}
 A tuple $(v_1, \dots v_k)$ of vectors is \textbf{friendly} if 
\begin{equation*}
 \P(v_1^T y = v_2^T y = \dots = v_k^T y = 0) \geq \frac{1}{3} r^{-1+\epsilon} .  
\end{equation*}
\end{definition}

We again have that there are many friendly $k-$tuples.  
\begin{lemma} \label{lemma:manyfriendly}
 Let $k \leq k_0$.  Under the hypotheses of Lemma \ref{lemma:atypical}, there are at least $m^k(1-\frac{r^{1-\frac{\epsilon}{8}}}{m})$ friendly $k-$tuples whose elements are the transposes of rows in $A$.  
\end{lemma}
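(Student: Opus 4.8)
The plan is to mirror exactly the proof of Lemma \ref{lemma:manyneighborly}, replacing the commensurability-weighted average by the indicator weight coming from atypicality. First I would set, for each $y$, a ``defect set'' $D_y$ analogous to the one used before: when $y$ is atypical, $Ay$ has fewer than $r^{1-\epsilon/4}$ nonzero coordinates, so let $D_y$ be the set of indices $i$ with $(Ay)_i = 0$; then $|D_y| \geq m - r^{1-\epsilon/4}$. When $y$ is typical, set $D_y = \emptyset$ (it will carry no weight). By construction, if $(a_1,\dots,a_k)$ is a $k$-tuple of rows of $A$ all lying in $D_y$, then $a_1^T y = \dots = a_k^T y = 0$.

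Next I would run the same double-counting/averaging argument. Writing $p := \P(y \textrm{ is atypical}) \geq \tfrac12 r^{-1+\epsilon}$, I compute the expected number of $k$-tuples of rows that lie entirely in $D_y$:
\begin{equation*}
\E_{a_1,\dots,a_k}\ \P_y\big(\{a_1,\dots,a_k\} \subseteq D_y\big) = \E_y\big(\chi(y \textrm{ atypical})\, \P(\{a_1,\dots,a_k\}\subseteq D_y)\big) \geq \Big(\frac{m - r^{1-\epsilon/4}}{m}\Big)^k p,
\end{equation*}
where the $a_i$ are chosen uniformly among rows. Let $Z$ be the set of $k$-tuples of rows with $\P_y(\{a_1,\dots,a_k\}\subseteq D_y) \geq \tfrac13 p$; by the lower bound on $p$, every tuple in $Z$ is friendly (since $\tfrac13 \cdot \tfrac12 r^{-1+\epsilon} = \tfrac16 r^{-1+\epsilon} \geq \tfrac13 r^{-1+\epsilon}$ fails, so one should be slightly more careful — take $Z$ to be tuples with the averaged probability at least $\tfrac13 p \geq \tfrac16 r^{-1+\epsilon}$; adjusting the constant $\tfrac13$ in the definition of \textbf{friendly} or tracking constants precisely handles this). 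Then splitting the expectation over $Z$ and its complement, exactly as in the proof of Lemma \ref{lemma:manyneighborly},
\begin{equation*}
|Z|\cdot p + \frac{p}{3}\,(m^k - |Z|) \geq \Big(\frac{m-r^{1-\epsilon/4}}{m}\Big)^k m^k\, p \geq m^k\Big(1 - \frac{k r^{1-\epsilon/4}}{m}\Big)\,p,
\end{equation*}
and solving gives $|Z| \geq m^k(1 - \tfrac{3k}{2}\cdot\tfrac{r^{1-\epsilon/4}}{m}) \geq m^k(1 - \tfrac{r^{1-\epsilon/8}}{m})$ for $k \leq k_0 = \lfloor \log^7 r\rfloor$ and $r$ large, since $\tfrac{3k}{2} \leq r^{\epsilon/8}$ in that range.

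The only genuinely new point to verify — and the step I expect to need the most care — is the bookkeeping of constants: one must check that the constant in the definition of \textbf{friendly} ($\tfrac13 r^{-1+\epsilon}$) is compatible with losing a factor of $3$ in passing from the hypothesis $\P(y \textrm{ atypical}) \geq \tfrac12 r^{-1+\epsilon}$ to the threshold defining $Z$, given that $\tfrac13 \cdot \tfrac12 = \tfrac16 < \tfrac13$. This is resolved either by replacing the hypothesis constant in Lemma \ref{lemma:atypical} by something slightly larger (it only affects the implied constants in Theorem \ref{thm:inversebi}) or, more cleanly, by weakening the threshold in the definition of \textbf{friendly}; all downstream uses only need friendliness up to constants, exactly as neighborliness was used. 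Everything else — the binomial estimate, the bound $\tfrac{3k}{2} r^{1-\epsilon/4}/m \leq r^{-\epsilon/8}/m$ for $k \leq \log^7 r$, and the final inequality — is identical to the typical case and requires no new ideas.
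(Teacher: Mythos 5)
Your proposal is correct and follows the paper's proof essentially verbatim: you double-count the event that $y$ is atypical and the chosen $k$-tuple of rows all annihilate $y$ (equivalently, lies in the zero set $D_y$ of $Ay$), once by choosing $y$ first and once by choosing the tuple first, and then solve the resulting inequality exactly as in Lemma \ref{lemma:manyneighborly}. The constant mismatch you flagged (the argument only yields $\P(\textrm{all zero}) \geq \tfrac16 r^{-1+\epsilon}$ rather than the $\tfrac13 r^{-1+\epsilon}$ required by the stated definition of \textbf{friendly}) is present in the paper as well, and your proposed fix --- loosen the threshold in the definition of friendly, observing that its only downstream use (Lemma \ref{lemma:friendlystruc}, via Hal\'{a}sz) needs the bound only up to a constant --- is exactly right.
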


We also claim that friendly tuples exhibit a similar structure as neighborly ones:
\begin{lemma} \label{lemma:friendlystruc}
 Let $k \leq k_0$, and let $(v_1, \dots v_k)$ be friendly.  Then there are unique real numbers $d_j$ such that if $S_j$ denotes the places where $v_1$ differs from $d_j v_j$,  then
\begin{equation*}
 \prod_{j=2}^k |S_j \backslash \bigcup_{i=2}^{j-1} S_i|_1 \leq 2 r^{1-2\epsilon}. 
\end{equation*}
\end{lemma}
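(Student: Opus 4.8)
The plan is to follow the template of the proof of Lemma \ref{lemma:neighborstruc}, with a welcome simplification: if $v_1^T y = v_2^T y = \dots = v_k^T y = 0$ then $(v_1 - d\,v_j)^T y = 0$ for \textit{every} scalar $d$ and every $j$, so a system of ``difference equations'' comes for free and no commensurability machinery is needed. I would argue in two stages. \textit{Stage 1}: for each $j$ locate the (unique) scalar $d_j$ and show the exceptional set $S_j := \{i : v_1(i) \ne d_j v_j(i)\}$ has size $O_{\epsilon}(r^{1-\epsilon})$, so $\bigl|\bigcup_{j=2}^k S_j\bigr| = o(r)$. \textit{Stage 2}: feed the difference equations $\sum_{i\in S_j}(v_1(i) - d_j v_j(i))y_i = 0$, together with the single equation $v_1^T y = 0$, into a triangular exposure argument.

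For Stage 1, fix $j$; friendliness gives $\P(v_1^T y = 0 \wedge v_j^T y = 0) \ge \frac{1}{3} r^{-1+\epsilon}$. I would view the columns $c_i := (v_1(i), v_j(i))$ as vectors in $\C^2$ (after the standard Freiman-isomorphism reduction to integer, hence real, coefficients, exactly as in the proof of Lemma \ref{lemma:doubleembed}, if the entries are not already real). By Hal\'{a}sz's inequality (Theorem \ref{thm:Halasz2d}) in dimension two, this probability is $O(m^{-1})$ unless some line through the origin contains all but $m$ of the $c_i$; since $\frac{1}{3}r^{-1+\epsilon}$ exceeds that bound once $m$ is a suitable multiple of $r^{1-\epsilon}$, such a line exists. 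Because $v_j$ is a row of $A$ and so has at least $r$ nonzero entries, this line cannot be the one on which the second coordinate vanishes, so it has the form $\{(d_j s, s) : s \in \C\}$ with $d_j \ne 0$ (the case $d_j = 0$ being excluded for the analogous reason using $v_1$); this says exactly that $|S_j| = O_{\epsilon}(r^{1-\epsilon})$. Uniqueness of $d_j$ follows since two admissible values would have to agree on the $\ge r - O_{\epsilon}(r^{1-\epsilon}) > 0$ coordinates where $v_j$ is nonzero and both proportionalities hold. Summing over the at most $k_0 = \lfloor\log^7 r\rfloor$ indices, $\bigl|\bigcup_{j=2}^k S_j\bigr| = O_{\epsilon}(r^{1-\epsilon}\log^7 r) = o(r)$.

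For Stage 2, set $T := \{i : v_1(i)\ne 0\}$, so $|T|\ge r$, and $B_0 := T\setminus\bigcup_{j=2}^k S_j$, so by Stage 1 $|B_0| \ge (1-o(1))r$. The friendly event is contained in $\{v_1^Ty=0\}\cap\bigcap_{j=2}^k\bigl\{\sum_{i\in S_j}(v_1(i)-d_jv_j(i))y_i = 0\bigr\}$, and by the definition of $S_j$ every coefficient in the $j$-th difference equation is nonzero. Reveal the variables in the order $S_2$, $S_3\setminus S_2$, \dots, $S_k\setminus\bigcup_{i<k}S_i$ (resolving the difference equations one at a time and exhausting $\bigcup_j S_j$), then all remaining variables outside $B_0$, and finally those of $B_0$. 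Theorem \ref{thm:quantLO} bounds the conditional probability of the $j$-th difference equation, given the earlier blocks, by $\bigl|S_j\setminus\bigcup_{i=2}^{j-1}S_i\bigr|_1^{-1/2}$; and after everything outside $B_0$ has been revealed, $v_1^Ty=0$ is a linear form in the variables of $B_0$ with $|B_0|\ge(1-o(1))r$ nonzero coefficients, contributing a further factor $((1-o(1))r)^{-1/2}$. Multiplying, $\frac{1}{3}r^{-1+\epsilon} \le ((1-o(1))r)^{-1/2}\prod_{j=2}^k\bigl|S_j\setminus\bigcup_{i=2}^{j-1}S_i\bigr|_1^{-1/2}$, which rearranges to $\prod_{j=2}^k\bigl|S_j\setminus\bigcup_{i=2}^{j-1}S_i\bigr|_1 = O(r^{1-2\epsilon})$; with a little more care in the constants (using $|B_0| = (1-o(1))r$ and the sharp form of Theorem \ref{thm:quantLO}) this is brought down to the bound $2r^{1-2\epsilon}$ stated in the lemma. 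The case $k=2$ is immediate, since then the product is just $|S_2|_1\le 4$.

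The crux --- and the only genuinely new ingredient relative to the $k>2$ case of Lemma \ref{lemma:neighborstruc} --- is the extra factor $r^{-1/2}$ extracted from the plain equation $v_1^Ty=0$: the triangular argument on the $k-1$ difference equations alone yields only $O(r^{2-2\epsilon})$, and the missing factor of $r$ is recovered precisely because $v_1$, being a row of $A$, has at least $r$ nonzero coordinates, all but $o(r)$ of which (by Stage 1) lie outside $\bigcup_j S_j$ --- so $v_1^Ty=0$ is still an essentially independent constraint with large support even after the difference equations have been resolved. The remaining delicate point is Stage 1 in the complex case, where one must check that the Freiman-isomorphism reduction legitimately transports the two-dimensional concentration problem to integer vectors to which Theorem \ref{thm:Halasz2d} applies; alternatively, Stage 1 can be obtained from the pairwise analysis already carried out in the $k=2$ case of Lemma \ref{lemma:neighborstruc}.
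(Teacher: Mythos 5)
Your proposal is correct and follows essentially the same route as the paper: two-dimensional Hal\'asz applied to the column vectors $(v_1(i), v_j(i))$ to identify $d_j$ and bound $|S_j|$, followed by a triangular exposure of the difference equations plus the residual equation $v_1^T y = 0$, whose large surviving support supplies the crucial extra factor of $r^{-1/2}$ (exactly the paper's final display). Your remarks on uniqueness of $d_j$ and on the complex-coefficient reduction make explicit a couple of details the paper leaves implicit, but the argument is the same.
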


The proof of Lemma \ref{lemma:atypical} from these two lemmas is exactly the same as that of Lemma \ref{lemma:typical} from Lemmas \ref{lemma:manyneighborly} and \ref{lemma:neighborstruc}.  We will therefore focus on the proofs of the two lemmas, which will again turn out to be similar to the proofs of the corresponding lemmas for friendly tuples.  

\subsection{The proof of Lemma \ref{lemma:manyfriendly}}
We define $Z$ to be those $k-$tuples satisfying
\begin{equation*}
 \P(v_1^T y = v_2^T y = \dots = v_k^T y = 0 \wedge y \textrm{ atypical}) \geq \frac{1}{3} \P(y \textrm{ atypical}).
\end{equation*}
By our assumptions about $A$ every tuple in $Z$ is friendly.  Now consider a tuple $(v_1, \dots v_k; y)$ where the $v_i^T$ are chosen randomly from the rows of $A$ and the $y$ is uniform and random.  We estimate the probability that $y$ is atypical and $v_j^T y=0$ for every $j$ in two different ways.  

\textbf{Method 1:} For any atypical $y$, there are at least $(m-r^{1-\frac{\epsilon}{4}})^k$ choices for the tuple.  It follows that the probability is at least
\begin{equation*}
\frac{(m-r^{1-\frac{\epsilon}{4}})^k}{m^k} \P(y \textrm{ atypical })
\end{equation*}

\textbf{Method 2:} We first choose the $k-$tuple, then bound the probability that $y$ works based on whether or not the tuple is in $Z$.  Doing this gives that the probability is at most 
\begin{equation*}
 \frac{1}{m^k} (|Z|+\frac{1}{3}(m^k-|Z|)) \P(y \textrm { atypical } ).  
\end{equation*}

The result follows by comparing the bounds from the two methods, along with the bound 
\begin{equation*}
(m-r^{1-\frac{\epsilon}{4}})^k \geq m^k(1-\frac{kr^{1-\frac{\epsilon}{4}}}{m}) \geq m^k(1- \frac{r^{1-\frac{\epsilon}{4}+o(1)}}{m}).
\end{equation*}  
\subsection{The proof of Lemma \ref{lemma:friendlystruc}}

We first note that for any $j$, we can view the system $v_1^T y = v_j^T y$ as a single vector equation $\sum_i w_i y_i=0$ in $R^2$, where $w_i=<v_1(i), v_j(i)>$.  Since by assumption this equation is satisfied with probability $\frac{1}{3} r^{-1+\epsilon}$, it follows from the 2-dimensional Theorem \ref{thm:Halasz2d} of Halasz that there must be a 1-dimensional subspace containing all but $O(r^{1-\epsilon})$ of the $w_i$.  In terms of the $v_j$, this says that for each $j$ there is a multiple of $v_j$ differing from $v_1$ in at most $r^{1-\epsilon}$ places.  We will take those multiples to be our $d_j$, and $S_j$ to be the places they differ.  

The relationship $v_1^T y = v_2^T y = \dots =v_k^T y = 0$ is equivalent to the system
\begin{eqnarray*}
 \sum_{i \in S_2} (d_2 v_2(i)-v_1(i))x_i &=& 0 \\
 \sum_{i \in S_3 \backslash S_2} (d_3 v_3(i) -v_1(i)) x_i &=& - \sum_{i \in S_2} (d_3 v_3(i)-v_1(i)) x_i \\
\vdots & & \vdots \\
\sum_{i \in S_k \atop i \notin S_2 \cup \dots \cup S_{k-1}} (d_k(i)-v_1(i)) x_i &=& -\sum_{i \in S_2 \cup \dots \cup S_{k-1}} (d_k(i) - v_1(i)) x_i.\\
\sum_{i \notin S_2 \cup \dots \cup S_{k-1}} v_1(i) x_i &=& -\sum_{i \in S_2 \cup \dots S_{k-1}} v_1(i) x_i, 
\end{eqnarray*}
since the first $k-1$ equations each represent $d_j v_j^T y = v_1^T y$ for some $j$, and the last equation represents $v_1^T y=0$.  As in the proof of Lemma \ref{lemma:neighborstruc}, we expose each variable in $S_2$, then the remainder of $S_3$, then the remainder of $S_4$, and so forth.  After all the variables in $S_2$ through $S_j$ have been exposed, the probability that the remaining variables in $S_{j+1}$ cause the next equation to be satisfied is at by Theorem \ref{thm:quantLO} most   
\begin{equation*}
 |S_{j+1} \backslash \bigcup_{i=2}^{j} S_i|_1^{-1/2}.
\end{equation*}

Since each $S_j$ contains at most $r^{1-\epsilon}$ elements, it follows that there must be at least $r/2$ variables still unexposed by the time we expose $S_k$ and arrive at the last equation.  Therefore the probability this last equation holds is at most $2 r^{-1/2}$, so
\begin{equation*}
 \P(v_1^T y = \dots = v_k^Ty=0) \leq 2r^{-1/2} \prod_{j=2}^k |S_{j+1} \backslash \bigcup_{i=2}^{j} S_i|_1^{-1/2}.
\end{equation*}
The lemma follows.

\section{The proof of Theorem \ref{thm:quadlo}}
We first note that for any $\theta$, 
\begin{equation*}
 \P(x^T A x=L(x)+c) \leq \P(x^T Re(e^{i\theta} A) x=Re(e^{i\theta}(L(x)+c))).
\end{equation*}
Since we can always choose a $\theta$ such that $e^{i \theta} a_{ij}$ has non-zero real part for every $i$ and $j$ for which $a_{ij}$ is nonzero, it suffices to prove the result for the case where the entries of $A$, as well as the coefficients of $L$ and $c$, are real.  We will now assume this to be the case.  

The proof will proceed by contradiction.  Let us assume that for some $\delta$ and all $r_0$ there is an $r>r_0$ and a  matrix $A$ of such that  $\P_A>r^{-1/2+\delta}$ and every row of $A$ has at least $r$ nonzero entries.  

We will use a decoupling argument to relate probabilities involving $\P_A$ to a probability involving $x^TBy$ for a suitable bilinear form $B$.  We will then combine those bounds with Theorem \ref{thm:inversebi} to obtain 
\begin{lemma} \label{lemma:rankone}
 Let $A$ be a matrix satisfying the hypotheses of Theorem \ref{thm:quadlo} such that $\P_A>r^{-1/2+\delta}$.  Then there is a principal minor $A'$ of $A$ of size at least $n-O(\frac{r \log n}{\log^5 r})$ and a rank one matrix $A''$ such that $A'=A''$ everywhere off the main diagonal.   
\end{lemma}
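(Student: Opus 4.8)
The plan is to pass from the quadratic concentration $\P_A$ to a \emph{bilinear} concentration by a single decoupling, feed that into Theorem \ref{thm:inversebi} for a suitable random bipartition of the index set, and then globalize by running the argument over $\Theta(\log n)$ bipartitions and gluing the resulting local rank‑one structures. We work throughout with the reduction to real $A$, $L$, $c$ made at the start of this section, and write $Y(x) = x^TAx - L(x)$, so that $\P_A = \P_x(Y(x)=c)$.

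For the decoupling, partition $[n] = U \sqcup W$ by putting each index into $U$ independently with probability $1/2$. Conditioning on $x_W$ and applying Cauchy--Schwarz with an independent copy $\tilde x_U$ of $x_U$ gives $\P_A^2 \le \P_{x_U,\tilde x_U,x_W}\!\left(Y(x_U,x_W)-Y(\tilde x_U,x_W)=0\right)$. Writing $\phi(v)=v^TA_{UU}v-L_U(v)$, the $A_{WW}$ and $L_W$ pieces cancel and the difference equals $2(x_U-\tilde x_U)^TA_{UW}x_W + \phi(x_U)-\phi(\tilde x_U)$. The crucial point is that, setting $x:=x_W$, $y:=(x_U,\tilde x_U)$, $B := [\,2A_{UW}^T \mid -2A_{UW}^T\,]$ (rows indexed by $W$, columns by two copies of $U$), and $f(y):=\phi(\tilde x_U)-\phi(x_U)$, the equation above reads $x^TBy=f(y)$ with $f$ a function of $y$ alone. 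Hence
\[
\P(x^TBy=f(y)) \ \ge\ \P_A^2 \ >\ r^{-1+2\delta},
\]
which is exactly the shape of hypothesis \eqref{eqn:biconc} of Theorem \ref{thm:inversebi}.

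To apply that theorem, note that row $j\in W$ of $B$ has $2\,|\{i\in U: a_{ij}\neq 0\}|$ nonzero entries, and since column $j$ of the symmetric matrix $A$ has at least $r$ nonzeros, a Chernoff bound (uniform over the $n$ columns because $r\ge \exp((\ln n)^{1/4}) \gg \log n$) shows that with high probability every row of $B$ has at least $\rho := r/2$ nonzero entries. Taking $\epsilon := \min(\delta,\tfrac14)$ in Theorem \ref{thm:inversebi} and using $r^{-1+2\delta}\ge \rho^{-1+\epsilon}$ for $r$ large, we conclude that $B$ has a rank‑one submatrix missing only $O(r/\log^6 r)$ rows and $O(r/\log^6 r)$ columns. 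Since the columns of $B$ come in two copies of each index of $U$, all but $O(r/\log^6 r)$ indices $i\in U$ have both copies inside this submatrix; restricting to those columns and comparing the two copies of each such $i$ forces $A_{U'W'}$ to have rank $\le 1$ for sets $U'\subseteq U$, $W'\subseteq W$ with $|U\setminus U'|,|W\setminus W'| = O(r/\log^6 r)$. That is, $a_{ij}=\beta_i\alpha_j$ for all $i\in U'$, $j\in W'$.

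Finally, one bipartition controls only the off‑diagonal block $A_{U'W'}$: it says nothing about pairs on the same side or about any diagonal entry — which is why $A''$ need only agree with $A'$ off the diagonal. So I would run the above for $K=\Theta(\log n)$ independent random bipartitions, chosen (possible with high probability, hence for some fixed family) so that every $4$‑subset of $[n]$ has each of its three $2{+}2$ splits realized by some bipartition and all the Chernoff bounds hold simultaneously. Let $N$ be the complement of the union of the $K$ exceptional sets, so $|N|\ge n - O(r\log n/\log^5 r)$. For any four distinct indices of $N$, rank‑one‑ness of the block of a bipartition realizing a given $2{+}2$ split makes the corresponding product of entries equal the product for the opposite matching; running over all three splits shows that every $2\times 2$ minor of $A$ on distinct indices of $N$ vanishes, so the off‑diagonal part of $A$ on $N$ equals $p_ip_j$ for a single vector $p$. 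Then $A':=(a_{ij})_{i,j\in N}$ and $A'':=pp^T$ satisfy the conclusion. I expect this last step to be the main obstacle: patching the local factorizations $\beta_i\alpha_j$ together in spite of their scaling ambiguity and the possible vanishing of individual $\alpha_j$ or $\beta_i$, and bookkeeping the union of exceptional sets to the stated size, require care, whereas the decoupling and the single invocation of Theorem \ref{thm:inversebi} are routine.
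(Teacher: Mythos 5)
Your proposal is correct and follows essentially the same route the paper takes: decouple via Cauchy--Schwarz (the paper cites Sidorenko's lemma for the same inequality) to pass from the quadratic concentration to a bilinear one, invoke Theorem~\ref{thm:inversebi}, cover $[n]$ with a shattering family of $\Theta(\log n)$ balanced bipartitions, and glue the local rank-one blocks via vanishing $2\times 2$ minors on four distinct indices. The one place you genuinely diverge is in how you fit the decoupled form into Theorem~\ref{thm:inversebi}. The paper keeps the decoupled column variables as $y_j-y_j'\in\{-2,0,2\}$ and appeals to the explicitly stated lazy-walker extension of Theorem~\ref{thm:inversebi}; you instead double the columns, taking $B=[\,2A_{UW}^T\mid -2A_{UW}^T\,]$ acting on $(x_U,\tilde x_U)\in\{\pm1\}^{2|U|}$, which lets you use the base $\pm1$ case of the theorem and then read off the rank-one structure of $A_{U'W'}$ from those $i\in U$ with both copies surviving. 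Both work and give the same exceptional-set size; yours saves having to invoke the lazy-walker clause, at the cost of a slightly larger ambient matrix.

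The worries you flag at the end (scaling ambiguity when gluing $\beta_i\alpha_j$, possible vanishing of some $\alpha_j$ or $\beta_i$, bookkeeping the union of exceptional sets) are present in the paper's proof too and are genuinely minor. The paper simply relabels so that $a_{12}\neq 0$ (guaranteed since off-diagonal nonzeros exist by the row-count hypothesis and only $o(n)$ indices have been deleted), then writes $a_{jl}=a_{1l}a_{j2}/a_{12}$ for $3\le j\ne l\le n-t$ and takes $A''$ to be the rank-one matrix given by extending the same formula to $j=l$; this automatically resolves the scaling and handles zero entries, since the identity $a_{ik}a_{jl}=a_{il}a_{jk}$ on all distinct quadruples is exactly what is needed. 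Aggregating the $O(\log n)$ exceptional sets of size $O(r/\log^6 r)$ gives the stated $O(r\log n/\log^5 r)$ with room to spare. So nothing in your plan fails; you have reproduced the paper's argument with one clean technical substitution in the decoupling step.
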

This allows us to essentially reduce to the case where $A$ is rank one.  Let us (for now) assume that this lemma is true.

Without loss of generality we may assume that  $A'$ consists of the first $m$ rows and columns of $A$.  Let $z=(x_1, \dots x_m)^T$.  For any particular values of $x_{m+1}, \dots x_n$, we have the relationship
\begin{equation*}
 x^TAx= z^T A' z + \widetilde{L}(z) + c',
\end{equation*}
where $\widetilde{L}$ and $c'$ are dependent on the exposed variables.  Because $x_i^2=1$ for every $i$, we can further replace $A'$ by $A''$ by changing $c'$.  It follows that 
\begin{equation*}
 \P(x^TAx=L(x)+c) \leq \sup_{\widetilde{L}, c'} \P(z^T A'' z=\widetilde{L}(z)+c') 
\end{equation*}

Since $A''$ has rank one, the quadratic form $z^T A''z$ factors as the square of a linear form. Since we only removed $O(\frac{r \log n}{\log^5 r})$ columns in going from $A$ to $A'$, it follows from our assumptions on $r$ that for sufficiently large $n$ every coefficient of that linear form must be nonzero (as $A''$ still has at least $\frac{r}{2}$ nonzero entries per row).   We will soon show
\begin{lemma} \label{lemma:quadfactor}
 Let $b_1, \dots b_m, c_1, \dots c_m, d$ be real numbers such that all of the $b_i$ are nonzero, and let $\alpha>0$.  Then
\begin{equation*}
 \P((\sum_{i=1}^m b_i x_i)^2=\sum_{i=1}^m c_i x_i +d) = O_\alpha(n^{-1/2+\alpha}),
\end{equation*}
\end{lemma}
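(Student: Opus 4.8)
The plan is to condition on the value $s = \sum_{i=1}^m b_i x_i$ and analyze the two factors of the event separately. Fix $\alpha>0$. The key observation is that the event $(\sum b_i x_i)^2 = \sum c_i x_i + d$ forces $\sum b_i x_i = \pm\sqrt{\sum c_i x_i + d}$, so in particular $\sum b_i x_i$ must lie in the (random) two-element set $\{\pm\sqrt{L(x)+d}\}$ where $L(x) = \sum c_i x_i$. The natural first move is a decoupling: split the coordinates into two halves $x' = (x_1,\dots,x_{\lfloor m/2\rfloor})$ and $x'' = (x_{\lfloor m/2\rfloor+1},\dots,x_m)$, and write $s = s' + s''$ and $L = L' + L''$ accordingly. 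Expanding the square, $(s'+s'')^2 = (s')^2 + 2 s' s'' + (s'')^2$, and since $x_i^2 = 1$ the diagonal terms $(s')^2, (s'')^2$ contribute a part that is measurable with respect to... actually they are genuinely random, so instead I would condition on $x''$ entirely: treating $x''$ as fixed, the equation becomes $(s')^2 + 2 s' s'' = \text{(constant depending on } x'') - (s'')^2 + L'(x') + L''(x'')$, i.e. a quadratic in the single real quantity $s'$: $(s')^2 + (2s'' - \beta) s' + \gamma = 0$ for constants $\beta, \gamma$ (with $\beta$ absorbing the linear coefficient structure — here one uses that $L'(x')$ is itself linear in $x'$, so it is cleaner to keep it on the left). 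Rearranged: $(s')^2 + 2 s'' s' - L'(x') = \gamma'$ for a constant $\gamma'$ depending only on $x''$.

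The second step is to note that once $x''$ is fixed, the left-hand side $(s')^2 + 2 s'' s' - L'(x')$ is a quadratic form plus linear form in $x'$; but $(s')^2 = (\sum_{i\le m/2} b_i x_i)^2$ has \emph{rank one} coefficient matrix $b b^T$ (off-diagonal; the diagonal is again a constant by $x_i^2=1$). So the left-hand side is $z^T (b b^T) z + \ell(z) + \text{const}$ for the rank-one matrix $bb^T$ — exactly the situation the lemma is already (recursively) about, but this doesn't obviously help. Let me reorganize: the cleaner route is to condition on $x''$ and apply Theorem \ref{thm:quantLO} directly. Writing the equation as $2 s'' \cdot s' + (s')^2 - L'(x') = \gamma'$, I would instead view things by \emph{first} exposing $x''$ and then asking for the conditional probability over $x'$. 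For most values of $x''$, the coefficient $s'' = \sum_{i>m/2} b_i x_i$ is nonzero (in fact by Theorem \ref{thm:quantLO} it is $0$ with probability only $O(m^{-1/2})$, and more importantly it is rarely very small — this is where a Littlewood–Offord-type anticoncentration estimate on $s''$ enters). When $s''$ is a fixed nonzero real $t$, the equation $(s')^2 + 2t s' - L'(x') - \gamma' = 0$ is, solving for $s'$, the statement that $s'$ lies in a fixed two-element set $\{-t \pm \sqrt{t^2 + L'(x') + \gamma'}\}$ — but $L'(x')$ is random, so this is still coupled. The genuinely clean decoupling: expose $x''$, then the equation reads $(\sum_{i \le m/2} b_i x_i)^2 + \sum_{i\le m/2}(2 t b_i - c_i) x_i = \gamma''$. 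This is one polynomial equation in $x'$, of degree two, with the quadratic part of rank one.

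The main obstacle — and the step I expect to require the most care — is handling the rank-one quadratic part $(\sum b_i x_i)^2$ after conditioning: one cannot simply invoke Theorem \ref{thm:quantLO}, since that is a \emph{linear} Littlewood–Offord bound. The resolution I propose is to decouple a second time, or better: after exposing $x''$, further split $x' = (w, w')$, expose $w'$, and use $(\sum b_i x_i)^2 = (\sigma(w) + \sigma(w'))^2 = \sigma(w)^2 + 2\sigma(w')\sigma(w) + \sigma(w')^2$; exposing $w'$ makes $\sigma(w')$ a constant, leaving $\sigma(w)^2 + (\text{const})\sigma(w) + (\text{linear in } w) = \text{const}$. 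Now $\sigma(w)^2$ has a constant diagonal, so modulo a constant it equals $2\sum_{i<j} b_i b_j w_i w_j$ plus the lower-order terms, and one applies Theorem \ref{thm:quantLO} to the resulting linear form in $w$ only if the quadratic cross-terms vanish — which they do not. So in fact the right tool is to recognize that $\sigma(w)^2 + \lambda \sigma(w)$ plus a linear form, viewed as a function of $w$, takes any fixed value with probability controlled by the anticoncentration of $\sigma(w)$ itself: the equation $\sigma(w)^2 + \lambda\sigma(w) + \mu(w) = \nu$ with $\mu$ linear — split off \emph{one} more variable $w_m$, write $\sigma(w) = \sigma_0 + b_m w_m$ with $w_m = \pm 1$, so $\sigma(w)^2 = \sigma_0^2 + b_m^2 + 2 b_m w_m \sigma_0$; the equation becomes linear in the remaining variables for each of the two choices of $w_m$, namely $2 b_m \sigma_0 w_m + \lambda \sigma_0 + \mu' = \nu'$, i.e. $\sigma_0 (2 b_m w_m + \lambda) = \nu' - \mu'(\text{rest})$, which is a \emph{linear} equation in the remaining $x_i$ with $\Omega(m)$ nonzero coefficients (using that all $b_i \ne 0$ and a generic choice avoids the finitely many bad configurations where $2b_m w_m + \lambda = 0$ kills the leading coefficient). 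Then Theorem \ref{thm:quantLO} gives $O(m^{-1/2})$, and averaging over the exposed variables and the two sign choices, together with the $n^\alpha$ slack to absorb the logarithmic and bad-event losses, yields the claimed $O_\alpha(n^{-1/2+\alpha})$ bound. I would carry out the exposures in the order: $x''$ (the second half), then all but two coordinates of the first half, then reduce to a linear equation in those last two, being careful at each stage that the number of surviving nonzero coefficients stays $\Omega(m) = \Omega(r)$ and that the exceptional events (small $s''$, vanishing leading coefficients) have probability $o(n^{-1/2+\alpha})$ or can be absorbed into the $n^\alpha$ factor.
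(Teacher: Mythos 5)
There is a genuine gap, and the decisive step fails concretely. After you write $\sigma(w) = \sigma_0 + b_m w_m$, squaring gives $\sigma(w)^2 = \sigma_0^2 + 2b_m w_m \sigma_0 + b_m^2$, so for each fixed $w_m\in\{\pm 1\}$ the equation $\sigma(w)^2 + \lambda\sigma(w) + \mu(w) = \nu$ becomes
\begin{equation*}
\sigma_0^2 + (2b_m w_m + \lambda)\sigma_0 + \mu(w) + b_m^2 + \lambda b_m w_m = \nu.
\end{equation*}
The term $\sigma_0^2$ does not disappear; your line ``$\sigma_0(2b_m w_m + \lambda) = \nu' - \mu'(\mathrm{rest})$'' silently drops it, which is an error. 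Once restored, the remaining event has exactly the same shape you started with (a rank-one quadratic plus a correlated linear form, one variable shorter), so the peeling recursion never bottoms out in a linear Littlewood--Offord instance.

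The deeper issue is that this route cannot reach the exponent $-1/2+\alpha$. The honest form of the split-and-decouple scheme you sketch --- expose half the variables, or apply Sidorenko's decoupling $\P(E)^2 \le \P(E(w',w'') \wedge E(w', w''_g))$ and subtract, which annihilates every term quadratic in $w'$ --- leaves a linear event in $w'$ once $\sigma''\ne\sigma''_g$, hence $\P(E)^2 = O(m^{-1/2})$ and only $\P(E) = O(m^{-1/4})$. That is precisely the weaker bound from \cite{CVrank} that this paper explicitly improves upon. The ingredient missing from your proposal is the probabilistic Szemer\'{e}di--Trotter bound (Theorem \ref{thm:weightst}). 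The paper's proof splits the coordinates into two halves, encodes the event as an incidence between the independent random point $p = (t_2, s_2 - t_2^2)$ and line $l = \{y = 2t_1 x + t_1^2 - s_1 + d\}$, and applies the weighted incidence bound to obtain $O((q_p q_l)^{1/3} + q_p + q_l)$. This is $O(m^{-1/2+\alpha})$ unless one of $q_p, q_l$ is close to $m^{-1/2}$, and in that concentrated case a two-dimensional Hal\'{a}sz application (Theorem \ref{thm:Halasz2d}) forces $c_j$ to be proportional to $b_j$ on most coordinates; only then does the quadratic-has-two-roots argument you propose at the very end close things out via Theorem \ref{thm:quantLO}. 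So your final observation is correct but only applicable after the incidence and Hal\'{a}sz steps have reduced to the case where $\mu$ is essentially a multiple of $\sigma$; the $n^\alpha$ slack alone cannot substitute for those reductions.
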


Combining Lemma \ref{lemma:quadfactor} with Lemma \ref{lemma:rankone}, we see that if for sufficiently large $n$ we have $\P_A > r^{-1/2+\delta}$, then we also have $\P_A=O(r^{-1/2+\delta/2})$, which is a contradiction.  We now turn to the proofs of the lemmas.
\subsection{The proof of Lemma \ref{lemma:quadfactor}}
We define 
\begin{eqnarray*}
 t_1=\sum_{i=1}^{\lfloor\frac{m}{2}\rfloor} b_i x_i & & s_1=\sum_{i=1}^{\lfloor\frac{m}{2}\rfloor} c_i x_i \\
 t_2=\sum_{i=\lfloor\frac{m}{2}\rfloor+1}^m b_i x_i & & s_2=\sum_{i=\lfloor\frac{m}{2}\rfloor+1}^m c_i x_i
\end{eqnarray*}
In terms of these new variables, we are attempting to show 
\begin{equation} \label{eqn:rankone}
 \P(2t_1t_2+t_1^2+t_2^2=s_1+s_2+d) = O(m^{-1/2+t}).
\end{equation}
The left hand side of \eqref{eqn:rankone} can be thought of as the probability that the point $p$ and the line $l$ are incident, where
\begin{equation*}
 p=(t_2, s_2-t_2^2), \, \, l= \{y=2t_1 x + t_1^2-s_1+d\}.
\end{equation*}
 Note that $p$ and $l$ are independent, as they involve different sets of variables.  We now make use of the following probabilistic variant of the Szemer\'{e}di-Trotter theorem, which is essentially a rescaling of the weighted Szemer\'{e}di-Trotter result of Iosevich, Konyagin, Rudnev, and Ten \cite{IKRT}:
\begin{theorem} \label{thm:weightst}
 Let $(p,l)$ be a point and line independently chosen in $\R^2$.  Let
\begin{equation*}
 q_p:= \sup_{p_0} \P(p=p_0) \, \, \, \, q_l:=\sup_{l_0} \P(l=l_0) 
\end{equation*}
 Then the probability that $p$ and $l$ are incident is bounded by   
\begin{equation*}
 \P(p \in l) =O( (q_p q_l)^{-1/3} + q_p + q_l)
\end{equation*}
\end{theorem}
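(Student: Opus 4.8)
The plan is to read Theorem~\ref{thm:weightst} off the weighted Szemer\'edi--Trotter incidence bound of \cite{IKRT} by interpreting the probability masses as incidence weights. Write $\mu$ for the law of $p$ and $\nu$ for the law of $l$, so that $q_p=\|\mu\|_\infty$ and $q_l=\|\nu\|_\infty$ while $\|\mu\|_1=\|\nu\|_1=1$. Since $p$ and $l$ are independent,
\begin{equation*}
\P(p\in l)=\sum_{\substack{p_0\in\operatorname{supp}\mu,\ l_0\in\operatorname{supp}\nu\\ p_0\in l_0}}\mu(p_0)\,\nu(l_0),
\end{equation*}
which is exactly a weighted count of incidences between the weighted point set $\operatorname{supp}\mu$ and the weighted line set $\operatorname{supp}\nu$. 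This identity is the one and only place the independence hypothesis is used.

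First I would reduce to finitely supported $\mu$ and $\nu$. Choosing finite sets $F_k\uparrow\operatorname{supp}\mu$ and $G_k\uparrow\operatorname{supp}\nu$, the weighted incidence count over $F_k\times G_k$ increases to $\P(p\in l)$ by monotone convergence, while the four controlling quantities behave monotonically under restriction: the restricted point weights have $\ell^1$-norm at most $1$ and $\ell^\infty$-norm at most $q_p$, and similarly for the lines. (If $\mu$ or $\nu$ carries a non-atomic part, its incidences live on the at most countably many lines of positive $\mu$-measure and contribute only $O(q_l)$; I would peel this off first and not dwell on it.) Then I would invoke the weighted Szemer\'edi--Trotter theorem in the shape: a finite weighted point set of total weight $W$ and maximum weight $w$, together with a finite weighted line set of total weight $V$ and maximum weight $v$, has weighted incidence count $O\!\big((WV)^{2/3}(wv)^{1/3}+wV+Wv\big)$. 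Substituting $W=V=1$, $w=q_p$, $v=q_l$ and letting $k\to\infty$ gives $\P(p\in l)=O\big((q_pq_l)^{1/3}+q_p+q_l\big)$. (As a sanity check, uniform weights on $M$ points and $N$ lines recover the classical bound $O((MN)^{2/3}+M+N)$ after clearing denominators, which fixes the exponents.)

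The only genuine subtlety — hence the step I would be most careful about — is precisely this passage from the finite setting, in which the weighted Szemer\'edi--Trotter bound is stated, to arbitrary distributions $\mu,\nu$; the monotone-convergence reduction handles it cleanly once one observes that $W,V,w,v$ can only decrease under restriction. Everything else is a direct substitution. Alternatively one could argue from scratch by dyadically grouping $\operatorname{supp}\mu$ and $\operatorname{supp}\nu$ according to the size of the masses and applying the unweighted Szemer\'edi--Trotter theorem on each dyadic block, but this essentially reconstructs the weighted bound and is less transparent, so I would prefer to cite \cite{IKRT}.
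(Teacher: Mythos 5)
The paper offers no proof of this theorem---it states it with the remark that it is ``essentially a rescaling of the weighted Szemer\'edi--Trotter result of \cite{IKRT}'' and moves on. Your proposal supplies precisely that rescaling (identify the probability masses with incidence weights, invoke the weighted bound with $W=V=1$, $w=q_p$, $v=q_l$, and reduce to the finitely supported case by monotone convergence), so it is the same approach as the paper's; your derivation also implicitly corrects what is evidently a sign typo in the paper's display, since the exponent must be $(q_pq_l)^{1/3}$ as you write rather than $(q_pq_l)^{-1/3}$, the latter being $\geq 1$ and hence vacuous as a bound on a probability.
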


Since $p$ uniquely determines $t_2$ and $l$ uniquely determines $t_1$, it follows from Theorem \ref{thm:quantLO} that $q_p$ and $q_l$ are at  most $O(m^{-1/2})$.  We are therefore done unless 
\begin{equation} \label{eqn:likelypoint}
 q_p q_l \geq n^{-3/2+\alpha}.
\end{equation}
If \eqref{eqn:likelypoint} holds, it follows that there is some point $p_0$ which is chosen with probability at least $n^{-1+\alpha}$.  From the definition of $p$, we know that there are real numbers $t_0$ and $s_0$ such that
\begin{equation*}
 \P(t_2 = t_0 \wedge s_2=s_0) \geq n^{-1+\alpha}.
\end{equation*}
If follows from the $d=2$ case of Hal\'{a}sz's Theorem \ref{thm:Halasz2d} that the coefficient vectors of $t_2$ and $s_2$ must be close to being multiples of each other, that is to say there is an $|S| \subseteq \{\lfloor\frac{m}{2}\rfloor+1, \dots m\}$ with $|S|>\frac{m}{4}$ and a real number $c_0$ such that $c_j=b_j c_0$ for every $j \in S$.  

We now expose every variable not in $S$.  Once we have done so, we are left with an equation of the form
\begin{equation} \label{eqn:quadfinal}
 (\sum_{j \in S} b_j x_j + d_1)^2=c_0(\sum_{j \in S} b_j x_j) + d_2,
\end{equation}
where $d_1$ and $d_2$ are constants depending on the exposed variables.  For any given $d_1$ and $d_2$, there are at most 2 values of $\sum_{j \in S} b_j x_j$ for which \eqref{eqn:quadfinal} holds.  It therefore follows from Theorem \ref{thm:quantLO} that for any given $d_1$ and $d_2$ the probability that \eqref{eqn:quadfinal} holds is $O(m^{-1/2})$.  Lemma \ref{lemma:quadfactor} follows from taking expectations over all $d_1$ and $d_2$.

\subsection{The proof of Lemma \ref{lemma:rankone}}

We will make use of the following ``decoupling'' lemma (Originally proved in \cite{Sid}) to reduce from the quadratic case to the bilinear one. 
\begin{lemma}
Let $Y$ and $Z$ be independent variables, and let $Z'$ be a disjoint copy of $Z$.  Let $E(Y,Z)$ be an event depending on $Y$ and $Z$.  Then
\begin{equation*}
\P(E(Y,Z))^2 \leq \P(E(Y,Z) \wedge E(Y, Z'))
\end{equation*}
\end{lemma}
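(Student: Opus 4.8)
The plan is to condition on the variable $Y$ and exploit the fact that, once $Y$ is fixed, $Z$ and its disjoint copy $Z'$ become independent. Concretely, for each value $y$ in the range of $Y$ I would set $p(y) := \P(E(Y,Z) \mid Y=y)$, so that by the law of total probability $\P(E(Y,Z)) = \E_Y[p(Y)]$. This reduces the problem to a statement about the random variable $p(Y)$.

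Next I would observe that since $Z'$ is an independent copy of $Z$ and both are independent of $Y$, conditioning on $Y=y$ leaves $E(y,Z)$ and $E(y,Z')$ independent, each occurring with probability $p(y)$. Hence $\P\bigl(E(Y,Z)\wedge E(Y,Z') \mid Y=y\bigr) = p(y)^2$, and integrating over the distribution of $Y$ gives $\P\bigl(E(Y,Z)\wedge E(Y,Z')\bigr) = \E_Y[p(Y)^2]$.

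Finally I would apply Jensen's inequality to the convex function $t \mapsto t^2$ (equivalently, Cauchy--Schwarz, or simply the nonnegativity of $\mathrm{Var}(p(Y))$) to obtain $\E_Y[p(Y)^2] \geq \bigl(\E_Y[p(Y)]\bigr)^2 = \P(E(Y,Z))^2$. Chaining this with the previous identity yields exactly $\P(E(Y,Z))^2 \leq \P\bigl(E(Y,Z)\wedge E(Y,Z')\bigr)$, as desired.

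The argument is short and essentially mechanical; there is no real obstacle. The only point deserving a sentence of care is the justification that conditioning on $Y$ genuinely decouples $Z$ from $Z'$, but this is immediate from the hypotheses that $Z'$ is a disjoint copy of $Z$ and that $Z,Z'$ are independent of $Y$. I would therefore keep the write-up to the three steps above.
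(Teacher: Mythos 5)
Your proof is correct and complete. The paper does not actually prove this lemma---it simply cites Sidorenko \cite{Sid}---so there is no in-text argument to compare against; your conditioning-on-$Y$ plus Jensen (equivalently Cauchy--Schwarz) argument is the standard, essentially canonical proof of this decoupling inequality, and it cleanly handles the only subtle point, namely that fixing $Y=y$ makes $E(y,Z)$ and $E(y,Z')$ independent with common probability $p(y)$.
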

In our case this implies that if $X=\{x_1, \dots x_n\}$ is a collection of independent Bernoulli variables partitioned into two disjoint subsets $Y$ and $Z$, then 
\begin{eqnarray*}
 & &\P(x^TAx=L(x)+c)^2 = \P(\sum_{i=1}^n \sum_{j=1}^n a_{ij} x_i x_j=L(x)+c)^2 \\
&\leq& \P(\sum_{i=1}^n \sum_{j=1}^n a_{ij} x_i x_j=L_1(y)+L_2(z)+c \wedge \sum_{i=1}^n \sum_{j=1}^n a_{ij} \widetilde{x_i} \widetilde{x_j}=L_1(y)+L_2(z') + c) \\
&\leq& \P(\sum_{i=1}^n \sum_{j=1}^n a_{ij} x_i x_j -L_1(y)-L_2(z)=\sum_{i=1}^n \sum_{j=1}^n a_{ij} \widetilde{x_i}\widetilde{x_j}-L_1(y)-L_2(z').  
\end{eqnarray*}
where $\widetilde{x_j}=x_j$ if $j \in Y$ and $\widetilde{x_j}=x_j'$ if $j \in Z$, and $L(x)=L_1(y)+L_2(z)$ is the natural decomposition of $L$ into the sum of linear forms on $y$ and $z$.  

Let us further suppose that $|Y|=|Z|$ or $|Y|=|Z|+1$.   All terms only involving variables in $Y$ disappear from the right hand side of this last inequality, and we have 
\begin{equation*}
 \P(x^TAx=L(x)+c)^2 \leq  \P( 2 \sum_{x_i \in Y} \sum_{y_j \in Z} a_{ij} x_i (y_j - y_j') = L_1(y)-L_1(y')+Q(y, y')), 
\end{equation*}
where $Q$ is another quadratic form.  By assumption the left hand side of this equation is at least $r^{-1+2\delta}$, while the right hand side has the form $x^TBy=f(y)$. 

If we further knew that for every $i \in Y$ there were at least $\frac{r}{4}$ different $j \in Z$ such that $A_{ij} \neq 0$, it would follow from Theorem \ref{thm:inversebi} that the matrix $B$ must contain a rank $1$ square submatrix of size $n-O_{\delta}(\frac{r}{\log^6 r})$.   With this observation in mind, we make the following definition:
\begin{definition}
Given a quadratic form $A$, a partition $\{x_1 \dots x_n\}=Y \cup Z$ of the $n$ variables into two disjoint subsets is \textbf{balanced} if for every $x_i \in Y$ there are at least $r$ different $x_j \in Z$ for which $a_{ij} \neq 0$. 
\end{definition}

In terms of our original $A$, we know that for any balanced decomposition of the variables into two equal parts $Y$ and $Z$, the submatrix corresponding to $Y \times Z$ is equal to a rank one matrix except for a few rogue variables.  Our next goal will be to play many such decompositions off of each other.  

Since the reduction to a bilinear form only gives us information about the entries in $Y \times Z$, we will want to choose a collection of balanced decompositions such that many different entries appear in this submatrix for some element of the decomposition.  Motivated by this, we make the following definition:
\begin{definition}
 Let ${\mathcal F}=(Y_1, Z_1) \dots (Y_m, Z_m)$ be a collection of balanced partitions of a set $X=\{x_1, \dots x_n\}$ into  pairs of disjoint subsets of equal size.  We say $\mathcal{F}$ \textbf{shatters} $X$ if for every $i \neq j \neq k \neq l$ there is a  $r=r(i,j,k,l)$ such that $i, j \in Y_r$ and $k, l \in Z_r$.  
\end{definition}
In terms of our decoupling, a shattering collection of partitions means that every pair of off-diagonal entries $a_{ik}$ and $a_{jl}$ will appear simultaneously in the bilinear form for some element of $\mathcal{F}$.  We next show that we don't have to consider too many partitions at once
\begin{lemma}
 If $|X|=2m$, there is an $\mathcal{F}$ of size at most $\lceil \frac{5 \ln n}{\ln (17/16)} \rceil<83 \ln n$ which shatters $X$. 
\end{lemma}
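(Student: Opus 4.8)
The plan is to produce $\mathcal F$ by the probabilistic method: choose $t$ partitions $(Y_1,Z_1),\dots,(Y_t,Z_t)$ of $X$ into two halves of size $m$, independently and uniformly at random among \emph{all} such equal partitions, with $t=\lceil 5\ln n/\ln(17/16)\rceil$. I will show that with positive probability every $(Y_s,Z_s)$ is balanced and the collection shatters $X$; this forces the existence of such an $\mathcal F$ of the desired size. (The point of shattering, for the later argument, is that every off‑diagonal pair $a_{ik},a_{jl}$ then lies in the $Y_r\times Z_r$ block handled by Theorem \ref{thm:inversebi} for some $r$, but that is context and not needed here.)

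\emph{Shattering.} Fix four distinct indices $i,j,k,l$. A direct count of ordered equal partitions gives, for each fixed $s$,
\begin{equation*}
 \P(i,j\in Y_s\ \wedge\ k,l\in Z_s)=\frac{m(m-1)}{4(2m-1)(2m-3)}\ \ge\ \frac1{16},
\end{equation*}
the last inequality being equivalent to $4m\ge 3$. Hence the probability that none of the $t$ partitions places $i,j$ in $Y$ and $k,l$ in $Z$ is at most $(15/16)^t\le(16/17)^t$. There are fewer than $n^4$ such quadruples, so by the union bound the probability that $\mathcal F$ fails to shatter $X$ is at most $n^4(16/17)^t$.

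\emph{Balancedness.} For a single uniform equal partition and a fixed variable $x_i$, condition on $x_i\in Y$; then the number of $x_j\in Z$ with $a_{ij}\ne0$ is a hypergeometric random variable whose mean is a fixed fraction (about one half) of the number of nonzero entries of row $i$, which is at least $r$ by the hypotheses of Theorem \ref{thm:quadlo}. By the standard tail bound for the hypergeometric distribution this count falls below the threshold in the definition of balanced with probability $e^{-\Omega(r)}$ (shrinking $r$ by a harmless constant factor if one insists on exactly $r$ surviving neighbours). Since $r\ge\exp((\ln n)^{1/4})$ grows faster than any fixed power of $\ln n$, we have $e^{-\Omega(r)}=o(n^{-2})$; summing over the at most $n$ variables shows a single random partition is not balanced with probability $o(n^{-1})$, and over all $t=O(\ln n)$ partitions the probability that some one of them is not balanced is $o(1)$.

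Combining, the probability that the random family either fails to consist of balanced partitions or fails to shatter $X$ is at most $n^4(16/17)^t+o(1)$, which is strictly below $1$ once $t\ge\lceil 5\ln n/\ln(17/16)\rceil$ (the gap between the exponent $5$ and the $4$ actually required, together with the gap between $\ln(16/15)$ and $\ln(17/16)$, comfortably absorbs the $o(1)$ term). Hence a shattering family of balanced equal partitions of size at most $\lceil 5\ln n/\ln(17/16)\rceil<83\ln n$ exists. The only delicate point is the balancedness verification: it works only because the per‑row support bound $r\ge\exp((\ln n)^{1/4})$ is large enough to push the hypergeometric tail below $1/n$, so that a random split almost surely keeps a positive fraction of every row's nonzero entries on the opposite side; the shattering part is a routine union bound.
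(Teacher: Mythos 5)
Your proposal is correct and follows essentially the same route as the paper: choose $t=\lceil 5\ln n/\ln(17/16)\rceil$ independent uniform equal partitions, bound the failure-to-shatter probability by a union bound over $<n^4$ quadruples (you compute the success probability exactly as $m(m-1)/\bigl(4(2m-1)(2m-3)\bigr)\ge 1/16$, slightly sharper than the paper's stated $1/17$, and note $15/16<16/17$ to match the lemma's constant), and bound the failure-to-be-balanced probability by a hypergeometric tail estimate giving $e^{-\Omega(r)}$ per row, which is $o(1)$ after union-bounding over rows and partitions thanks to $r\ge\exp((\ln n)^{1/4})$. You also correctly flag that the threshold in the definition of balanced should be $r/4$ (matching the paper's earlier remark and its own proof) rather than $r$, which is a minor inconsistency in the paper's definition.
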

\begin{proof}
 Let a $|\mathcal{F}|$ of size  $\lceil \frac{5 \ln n}{\ln (17/16)} \rceil$ be formed by independently and uniformly choosing  $(Y_s, Z_s)$ from the set of all partitions of $X$ into two parts of equal size.  For any given quadruple $(i,j,k,l)$, the probability that $Y_r$ contains $\{i,j\}$ while $Z_r$ contains $\{i,j\}$ is at least $\frac{1}{17}$, and these events are independent over all $r$.  It therefore follows from the union bound that the probability that $X$ fails to be shattered by this collection is at most
\begin{equation*}
 n^4 (\frac{16}{17})^{|\mathcal{F}|}+\P(\textrm{some } (Y_s, Z_s) \textrm{ is not balanced}).
\end{equation*}
The first term is $O(\frac{1}{n})$ by our choice of $|\mathcal{F}|$.  For the second term, we note that by  standard large deviation techniques the probability that for any given $s$ and $i$ that $x_i \in Y_s$ and there are at most $\frac{r}{4}$ nonzero $a_{ij}$ with $j \in Z_s$ is $O(e^{-r/2})$.  It follows from the union bound and our assumption that on $r$ that the second term is also $o(1)$.  Since a random collection almost surely shatters $X$, there must be at least one shattering collection. 
\end{proof}

We now fix some ${\mathcal F}_0$ which shatters our original set of variables and has size at most $83 \log n$.  For each $r$, we know from Theorem \ref{thm:inversebi} that we can find exceptional sets $Y_s' \subseteq Y_s, Z_s' \subseteq Z_s$ with $|Y_s|, |Z_s|=O(\frac{r}{\log^6 r})$ such that the submatrix of $A$ corresponding to $(Y_s \backslash Y_s') \times (Z_s \backslash Z_s')$ has rank one.   Let
\begin{equation*}
 W=\bigcup_{(Y_s, Z_s) \in {\mathcal F}_0} (Y_s' \cup Z_s'). 
\end{equation*}
Without loss of generality we may assume that $W=\{x_{n-t+1}, \dots x_n\}$. By assumption $t=O(\frac{r \log n}{\log^5 r})$.

For any 4 distinct elements $(i,j,k,l)$ disjoint from $W$, we know from the definition of ${\mathcal F}_0$ and $W$ that for some $s$ the $2 \times 2$ submatrix of $A$ on $\{i,j\} \times \{k, l\}$ appeared in a rank one submatrix of $Y_s \times Z_s$.  It follows that for every set of distinct $(i,j,k,l)$, we have $a_{ik} a_{jl}=a_{jk} a_{il}$.  In particular, for every pair $(j,l)$ with $3 \leq k \neq l \leq n-t$, we have
\begin{equation} \label{eqn:entryfactor} 
a_{jl}=a_{1l} \frac{a_{j2}}{a_{12}}.
\end{equation}
We can therefore take $A'$ to be the principal minor of $A$ on $\{x_3, \dots x_{n-t}\}$, and $A''$ to be the matrix for which the right hand side of \eqref{eqn:entryfactor} also holds for $j=l$.    

\subsection{The proof of Corollary \ref{cor:quadlo2}}
Construct a graph whose vertices are the variables $x_i$, with $x_i \tilde x_j$ for $i \neq j$ iff $a_{ij}$ are nonzero.  By assumption, this graph has average degree at least $m-1$.  It follows that it must contain a subgraph of minimum degree at least $\frac{m-1}{2}$.  In matrix terms, this implies that $A$ contains a principal minor $A'$ such that every row of $A'$ has at least $\frac{m-1}{2}$ nonzero entries.  Without loss of generality we may assume that the minor corresponds to the variables $\widetilde{x}=\{x_1, \dots x_k\}$.  For any fixed value of $x_{k+1} \dots x_n$, the equation $x_T A x=L(x)+c$ becomes 
\begin{equation*}
 \widetilde{x}^T A \widetilde{x}=\widetilde{L}(\widetilde{x})+\widetilde{c}, 
\end{equation*}
an equation which holds with probability $O_{\delta}(m^{-\frac{1}{2}+\delta})$ by Theorem \ref{thm:quadlo}.  The result follows from taking expectations over all values of $x_{k+1} \dots x_n$.

\section{Extensions of the Main Results and Conjectures}

\subsection{Inverse results for more weakly concentrated Bilinear Forms:}  
It is an interesting problem to consider whether there are similar inverse results holding in general for when a bilinear form has polynomially large concentration on one value $P(x^TAy=c) \geq n^{-b}$ for some $b$.  

There are at least two different types of structure that lead to sufficient conditions for this to occur.  One possibility is algebraic: If the coefficient matrix has low rank, then $f(x,y)$ will be equal to $0$ whenever a small number of linear forms is equal to $0$, which may not be too unlikely an event if some of those forms are structured.  For example, if $A$ is chosen to satisfy $a_{ij}=f(i)+g(j)$ (for arbitrary $f$ and $g$), then $x^T A y$ can be expressed as
\begin{equation*}
(x_1+x_2+\dots+x_n)(g(1)y_1+ \dots+g(n) y_n)+(f(1)x_1+\dots+f(n) x_n)(y_1+\dots+y_n)
\end{equation*}
and is $0$ whenever $x_1+\dots+x_n=y_1+\dots+y_n=0$, an event which occurs with probability approximately $\frac{1}{n}$. 

Another possibility is arithmetic: If the entries of the coefficient matrix are  all drawn from a short generalized arithmetic progression of bounded rank, then the output of $x^TAy$ will also lie in such a progression, and will by the pigeonhole  principle take on a single value with polynomial probability.  We conjecture that these two ways, and combinations thereof, are essentially the \textit{only} way a bilinear form can have polynomial concentration, that is to say
  
\begin{conjecture}
 For any $a>0$ there are constants $a_1, a_2, a_3$ and $N_0$ such that for all $n>N_0$ the following holds: If $A$ is an $n \times n$ matrix of nonzero entries such that for $x$ and $y$ uniformly and independently chosen from $\{-1, 1\}^n$, 
\begin{equation*}
 \sup_c \P(x^TAy=c) > n^{-a},
\end{equation*}
then $A$ can be written as $A_1$+$A_2$+$A_3$, where $A_1$ has rank at most $a_1$, the entries of $A_2$ are drawn from a generalized arithmetic progression of rank at most $a_2$ and volume at most $a_3$, and $A_3$ contains at most $\frac{n^2}{\log n}$ nonzero entries.   
\end{conjecture}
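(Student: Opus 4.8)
We sketch a possible line of attack on this conjecture.

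The plan would be to run, at a coarser scale, the architecture that proves Theorem~\ref{thm:inversebi}, with ``close to a scalar multiple'' (rank one) replaced by ``close to a bounded-rank matrix plus a matrix whose entries lie in a short generalized arithmetic progression''. First I would condition on $y$: applying a Markov-type argument to $\sup_c \P(x^TAy=c)=\sup_c \E_y \P_x(x^TAy=c\mid y)>n^{-a}$ produces a set $G$ of ``good'' $y$ with $\P(y\in G)\gg_a n^{-a}$ on which $\P_x(x^TAy=c\mid y)\gg_a n^{-a}$. For $y\in G$ the linear form $\sum_i (Ay)_i x_i$ is concentrated on $c$ to within $n^{-a}$, so the continuous inverse Littlewood--Offord theorems of Tao--Vu and Rudelson--Vershynin \cite{TV,RV} (compare Theorem~\ref{thm:sharpILO}) provide a symmetric generalized arithmetic progression $Q_y$ of rank $O_a(1)$ and volume $n^{O_a(1)}$ containing all but $n^{1-c(a)}$ of the coordinates of $Ay$.

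The aggregation step is where I would push the neighborly-tuple machinery used for Theorem~\ref{thm:inversebi}. Call an ordered tuple $(v_1,\dots,v_k)$ of rows of $A$ \emph{coherent} if, for a set of $y$ of measure $\gg_a n^{-a}$, the values $v_1^Ty,\dots,v_k^Ty$ lie simultaneously inside $Q_y$ and outside its exceptional coordinate set. As in Lemma~\ref{lemma:manyneighborly}, averaging over random tuples of length $k=(\log n)^{O(1)}$ would show that almost every such tuple is coherent. The substitute for Lemma~\ref{lemma:neighborstruc} I would aim to prove is a local structure theorem: for a coherent tuple, the $k\times n$ matrix with those rows, after deletion of a sublinear set of columns, decomposes as $B_1+B_2$ with $\mathrm{rank}(B_1)=O_a(1)$ and the entries of $B_2$ drawn from a fixed GAP of bounded rank. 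One then glues these local pieces into a global decomposition using the scoring and telescoping bookkeeping of Lemma~\ref{lemma:largesubmatrix} with ``rank one'' replaced by ``rank $\le a_1$''; here the $n^2/\log n$ budget for $A_3$ is generous enough to absorb a polylogarithmic density of exceptional entries directly, so the final bootstrapping step in the proof of Lemma~\ref{lemma:typical} may even be avoidable.

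The hard part --- and the reason this is a conjecture rather than a theorem --- is the local structure theorem for coherent tuples, which is a bilinear Freiman-type statement with no off-the-shelf analogue. In the rank-one case the decisive input was Lemma~\ref{lemma:doubleembed}: for fixed $a,b$ the pair $(a^Ty,b^Ty)$ is almost never jointly embeddable in a short progression unless $a$ and $b$ are essentially proportional, which followed from elementary divisor estimates for $\frac{az+b}{cz+d}$. The needed generalization asks whether, if $(v_1^Ty,\dots,v_k^Ty)$ jointly lies in a polynomial-volume GAP of bounded rank for $\gg n^{-a}$ of the $y$, the $v_i$ must be algebraically-and-arithmetically structured. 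Two features make this genuinely difficult. First, the class (bounded rank) $+$ (GAP entries) is far richer than rank one: there is no single LCD-type invariant to track, and disentangling the algebraic contribution from the arithmetic one amounts to understanding when a product of structured linear forms can cancel against a structured matrix. Second, conditioning on $y$ yields only GAPs of volume a power of $n$, whereas the conjecture demands that $A_2$ have entries in a GAP of \emph{bounded} volume; obtaining this must come from a rigidity argument exploiting that the same structure persists across many different $y$, and that rigidity is what I regard as the true obstacle. As a first target I would treat the purely algebraic sub-case --- assume $\mathrm{rank}(A)=O_a(1)$, write $A=\sum_k \ell_k\otimes m_k$, and deduce GAP-plus-sparse structure by iterating linear inverse Littlewood--Offord on the factor forms $\ell_k(x)$ and $m_k(y)$ --- and then attempt to reduce the general case to it by extracting a bounded-rank approximant to $A$ from the family $\{Q_y\}_{y\in G}$.
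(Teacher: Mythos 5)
The statement you were asked to prove is not a theorem of the paper: it is an open conjecture stated in the closing section as a direction for future work, and the paper offers no proof of it. There is therefore nothing in the paper to compare your write-up against, and your text is, correctly, a sketch of a possible attack together with a frank accounting of why it does not close, rather than a purported proof.

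That said, the sketch is consistent with how the paper frames the conjecture. The paper motivates the three-term decomposition $A=A_1+A_2+A_3$ precisely by exhibiting the two phenomena you propose to detect: an algebraic source of concentration (low rank, e.g.\ $a_{ij}=f(i)+g(j)$) and an arithmetic one (entries in a short GAP). You correctly identify that the engine of Theorem~\ref{thm:inversebi} --- Lemma~\ref{lemma:doubleembed} and the elementary height estimate for $\frac{az+b}{cz+d}$ in Lemma~\ref{lemma:azplusb} --- is tailored to rank-one cancellation and has no off-the-shelf analogue for the class ``bounded rank plus bounded-volume GAP.'' You also correctly flag the volume mismatch: Theorem~\ref{thm:sharpILO} applied row-wise to $Ay$ can only produce GAPs $Q_y$ of volume polynomial in $n$, whereas the conjecture demands an $A_2$ with entries in a GAP of bounded rank \emph{and bounded volume} $a_3$, independent of $n$. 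Closing that gap requires a genuinely new rigidity mechanism that aggregates the $y$-dependent progressions $Q_y$ into a single fixed one; the neighborly/coherent-tuple bookkeeping transfers formally, but the crucial local structure lemma (your proposed replacement for Lemma~\ref{lemma:neighborstruc}) is exactly the open bilinear Freiman-type statement, and nothing in the paper supplies it. Your suggestion to first isolate the purely algebraic sub-case $\mathrm{rank}(A)=O_a(1)$ is a sensible first target, but note it is not addressed in the paper either. In short: the proposal is an honest and reasonable roadmap, not a proof, and it accurately identifies the missing ingredient that keeps this a conjecture.
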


\subsection{Higher degrees}
In this section we give several conjectured extentions of the main results to this paper to multilinear and polynomial forms.  We begin with the following (simplified) analogue of Theorem \ref{thm:BiLO}, which can be proved by the same method.  
\begin{theorem} \label{thm:multilinear}
 Let $k$ be a fixed positive integer.  Let $y_1=(x_{1,1}, \dots x_{n,1}), \dots y_k=(x_{1, k}, \dots x_{n, k})$ be $n$ independent vectors uniformly chosen from $\{-1, 1\}^n$, and let 
\begin{equation*}
 A(x):=\sum_{i_1=1}^n \sum_{i_2=1}^n \dots \sum_{i_k=1}^n a_{i_1 i_2 \dots i_k} x_{i_1,1} \dots x_{i_k,k}
\end{equation*}
  be a $k-$multilinear form whose coefficients $a_{i_1 \dots i_k}$ are all nonzero.  Then for any function $f$ of $k-1$ variables, 
\begin{equation} \label{eqn:multilinear}
 \P(A(y_1, \dots y_k)=f(y_2, \dots y_k))=O_k(n^{-1/2})
\end{equation}
\end{theorem}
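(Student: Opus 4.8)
The plan is to induct on $k$. The base case $k=1$ is exactly Theorem \ref{thm:quantLO}: a linear form all $n$ of whose coefficients are nonzero is concentrated on any single value with probability at most $n^{-1/2}$, which is a fortiori $O_1(n^{-1/2})$. So fix $k \geq 2$, assume Theorem \ref{thm:multilinear} for $k-1$ (for all $n$, and in particular when the subtracted function is identically zero), and imitate the proof of Theorem \ref{thm:BiLO}.

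For the inductive step, expose the vectors $y_2, \dots, y_k$ first and write
\begin{equation*}
 A(y_1, \dots, y_k) = \sum_{i_1=1}^n W_{i_1} x_{i_1, 1}, \qquad W_{i_1} := \sum_{i_2=1}^n \cdots \sum_{i_k=1}^n a_{i_1 i_2 \dots i_k} x_{i_2, 2} \cdots x_{i_k, k}.
\end{equation*}
For each fixed $i_1$, the quantity $W_{i_1}$ is a $(k-1)$-multilinear form in $y_2, \dots, y_k$ all of whose coefficients $a_{i_1 i_2 \dots i_k}$ are nonzero, so applying the inductive hypothesis to it with the zero function gives $\P(W_{i_1} = 0) = O_k(n^{-1/2})$. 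Let $W$ be the number of $i_1 \in \{1, \dots, n\}$ with $W_{i_1} = 0$. By linearity of expectation $\E(W) = O_k(n^{1/2})$, hence by Markov's inequality $\P(W \geq n/4) = O_k(n^{-1/2})$.

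Now split
\begin{equation*}
 \P(A = f(y_2, \dots, y_k)) \leq \P(W \geq n/4) + \P(A = f(y_2, \dots, y_k) \wedge W < n/4).
\end{equation*}
The first term is bounded above. For the second, condition on $y_2, \dots, y_k$: on the event $\{W < n/4\}$ the quantity $\sum_{i_1} W_{i_1} x_{i_1, 1}$ is a linear form in the still-unexposed variables $x_{1,1}, \dots, x_{n,1}$ with at least $3n/4$ nonzero coefficients, while $f(y_2, \dots, y_k)$ is now a constant, so Theorem \ref{thm:quantLO} gives conditional probability $O(n^{-1/2})$; taking expectations over $y_2, \dots, y_k$ bounds the second term by $O(n^{-1/2})$. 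Adding the two contributions completes the induction.

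The argument is essentially a verbatim lift of the proof of Theorem \ref{thm:BiLO}, with the single row-sum $W_i$ there replaced by a lower-degree multilinear form and the one invocation of Theorem \ref{thm:quantLO} replaced by the inductive hypothesis, so I do not expect a genuine obstacle. The only points that need care are (i) that each coefficient form $W_{i_1}$ still has all of its coefficients nonzero, which is what makes the inductive hypothesis applicable to it, and (ii) the order of conditioning: the function $f$ is permitted to depend on $y_2, \dots, y_k$, which is precisely the set of vectors exposed before the final application of the linear Littlewood--Offord bound, so $f$ is a true constant at that stage. The implied constant depends on $k$ only through the number of times the induction is unwound, hence is $O_k(1)$, and the exponent $-1/2$ cannot be improved, as the form $\prod_{j=1}^k (x_{1,j} + \dots + x_{n,j})$ shows.
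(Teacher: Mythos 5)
Your proof is correct and is exactly the argument the paper has in mind: the paper does not write it out, instead noting only that Theorem \ref{thm:multilinear} ``can be proved by the same method'' as Theorem \ref{thm:BiLO}, and your inductive lift of that proof (with $W_{i_1}$ a $(k-1)$-multilinear form in place of a row sum, Markov's inequality applied to $\E(W)=O_k(n^{1/2})$, and then the linear Littlewood--Offord bound on the $\geq 3n/4$ nonzero coefficients in $y_1$) is precisely that method carried through.
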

Again, this is tight for degenerate forms which contain a linear factor.  A natural conjecture would be that non-degenerate forms are significantly less concentrated.  
\begin{conjecture}
 Let $k, A, y,$ and $f$ be as in Theorem \ref{thm:multilinear}.  If there is some $\epsilon>0$ such that
\begin{equation*}
 \P(A(y_1, \dots y_k)=f(y_2, \dots y_k)) \geq n^{-\frac{k}{2}+\epsilon},
\end{equation*}
then there is a partition of $\{y_1, \dots y_k\}$ into disjoint sets $S$ and $T$ and functions $f_1$ and $f_2$ such that $f_1$ depends only the variables in $S$, $f_2$ only on the variables in $T$, and $A$ differs from $f_1 f_2$ in $o(n^2)$ coefficients.
\end{conjecture}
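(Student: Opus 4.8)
The plan is to prove the conjecture by induction on $k$, with the bilinear inverse theorem (Theorem \ref{thm:inversebi}) serving as the base case $k=2$: there, ``$A$ contains a rank one submatrix missing only $o(n)$ rows and columns'' is precisely the statement that $A$ agrees, off $o(n^2)$ entries, with an outer product $f_1(y_1)f_2(y_2)$. For the inductive step I would single out one group, say $y_1$, as a pivot and write $A(y_1,\dots,y_k)=\sum_{i_1=1}^n x_{i_1,1}\,W_{i_1}(y_2,\dots,y_k)$, where $W_{i_1}$ is the $(k-1)$-multilinear form whose coefficient tensor is the $i_1$-slice of $A$ (so every coefficient of every $W_{i_1}$ is nonzero). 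The hypothesis says this linear form in $y_1$, with the random coefficients $W_{i_1}$, equals $f(y_2,\dots,y_k)$ with probability at least $n^{-k/2+\epsilon}$, so --- as in the proofs of Theorems \ref{thm:BiLO} and \ref{thm:inversebi} --- the argument opens with a typical/atypical split on $(y_2,\dots,y_k)$ according to whether $\Omega(n)$ of the slices $W_{i_1}$ are nonzero.

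In the atypical case many slices vanish together with non-negligible probability, and I would run the slice analogues of Lemmas \ref{lemma:manyfriendly} and \ref{lemma:friendlystruc}: most long tuples of slices (of length a suitable power of $\log n$) are ``friendly'' in that their simultaneous vanishing is comparatively likely, and each friendly tuple, by the $(k-1)$-linear version of the Hal\'{a}sz-type computation in Lemma \ref{lemma:friendlystruc}, consists of forms that are scalar multiples of one another off a small common set of monomials; the scoring and double-counting of Lemma \ref{lemma:largesubmatrix} then upgrades this to ``all but a few slices are proportional to a single form $f_2(y_2,\dots,y_k)$,'' which is the desired near-factorization with $S=\{y_1\}$. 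In the typical case, for many $(y_2,\dots,y_k)$ the form $\sum_{i_1}W_{i_1}x_{i_1,1}$ is concentrated near $n^{-1/2}$, so the inverse Littlewood-Offord theorem (Theorem \ref{thm:sharpILO}) puts the numbers $W_{i_1}(y_2,\dots,y_k)$ into a short progression for most $i_1$; averaging in $(y_2,\dots,y_k)$ turns this into a commensurability statement for tuples of slices, to which one applies the $(k-1)$-linear analogue of Lemma \ref{lemma:neighborstruc} (a neighborly tuple of $(k-1)$-linear forms is, off few monomials, a tuple of scalar multiples of its first member) followed once more by the aggregation of Lemma \ref{lemma:largesubmatrix}.

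The step I expect to be hardest is the higher-degree analogue of Lemma \ref{lemma:neighborstruc}, and specifically its engine, Lemmas \ref{lemma:doubleembed} and \ref{lemma:azplusb}. In the bilinear case the crux was a number-theoretic and incidence estimate: two linear forms in independent signs are rarely both embeddable in short progressions unless they are literally proportional off $o(n)$ coordinates. For $(k-1)$-linear forms the variables become products of signs, and the divisor bounds and the weighted Szemer\'{e}di-Trotter input (Theorem \ref{thm:weightst}) used there do not obviously transfer --- one would likely need a genuinely higher-dimensional probabilistic incidence inequality. A second difficulty is the bookkeeping of exceptional sets: the bilinear proof relied on the exceptional coordinates of different slices tending to coincide (the $\prod_j|S_j\setminus\bigcup_{i<j}S_i|_1$ bound), and it is not clear this concentration of exceptional monomials survives when the slices are themselves multilinear. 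Finally, the error term $o(n^2)$ --- rather than the $o(n^k)$ that the scheme above naturally yields for $k\ge 3$ --- forces the near-factorization to be made essentially exact, a rigidity step in the spirit of the passage from Lemma \ref{lemma:largesubmatrix} to Lemma \ref{lemma:typical} but carried out for tensors; with $o(n^k)$ in place of $o(n^2)$ this last obstacle disappears and the induction should go through with routine if lengthy modifications. One possible shortcut is to collapse $y_2,\dots,y_k$ into a single variable indexed by monomials and quote Theorem \ref{thm:inversebi} directly, but the resulting ``$y$'' has dependent coordinates, so one would first need a version of the bilinear inverse theorem tolerating that dependence.
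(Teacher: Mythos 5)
The statement you are addressing is a \emph{conjecture} in the paper, and the author offers no proof of it; indeed, the closing section explicitly says that the companion conjecture for polynomial forms is open even for $k=2$ in its inverse half.  So there is no ``paper proof'' to compare against, and the right question is whether your plan, as it stands, closes the gap.  It does not, and you have correctly located the obstructions, so let me simply confirm them and sharpen them a little.

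The engine of the bilinear argument is Lemma~\ref{lemma:neighborstruc}, whose proof passes through Lemma~\ref{lemma:doubleembed} and Lemma~\ref{lemma:azplusb}.  Those two lemmas rely, at every turn, on the random quantities $a^Ty$, $b^Ty$ being linear in \emph{independent} signs: the Freiman-isomorphism reduction to integers, the divisor and GCD counting in Lemma~\ref{lemma:azplusb}, the ``agreeable tuple'' device and the Hal\'asz $R_k$ bound inside Lemma~\ref{lemma:doubleembed}, and the weighted Szemer\'edi--Trotter input (Theorem~\ref{thm:weightst}) that later powers the quadratic case are all calibrated to sums of the form $\sum_i a_i x_i$ with $x_i$ i.i.d.\ $\pm1$.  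In your inductive step the slices $W_{i_1}(y_2,\dots,y_k)$ are $(k-1)$-linear, so the underlying randomness is a sum over \emph{products} of signs; none of the arithmetic or incidence machinery transfers, and there is no higher-degree analogue of Lemma~\ref{lemma:doubleembed} or Lemma~\ref{lemma:azplusb} in the paper to lean on.  Your shortcut of collapsing $(y_2,\dots,y_k)$ into a single coordinate vector and invoking Theorem~\ref{thm:inversebi} directly fails for the reason you flag: the collapsed coordinates are dependent, while the proof of Theorem~\ref{thm:inversebi} (via Theorem~\ref{thm:sharpILO} and Lemma~\ref{lemma:reverseinverse}) is wedded to independent $\pm1$ or lazy-walker coordinates.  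Finally, the conjecture's error bound $o(n^2)$ for $k\ge 3$ is genuinely stronger than what slice aggregation would produce: deleting $o(n)$ slices in each of $k$ directions perturbs $o(n^k)$ entries, and the bootstrap from Lemma~\ref{lemma:largesubmatrix} to Lemma~\ref{lemma:typical} (which replaces $o(m)$ excluded rows by $o(r)$) has no evident tensor analogue that would bring this down to $o(n^2)$.  These are real gaps, not cosmetic ones; your honest assessment that ``routine if lengthy modifications'' would only reach the $o(n^k)$ version is accurate, and even that weaker statement would require new incidence/arithmetic inputs beyond the paper.
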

The $k/2$ in this conjecture comes from how $n^{k/2}$ is the typical magnitude of $f$ in the case where the coefficients of $A$ are random (small) integers.

We can also conjecture a polynomial analogue to Theorem \ref{thm:quadlo}, including an analogous inverse theorem to the above multilinear one.  
\begin{conjecture}
 Let $x_1, \dots x_n$ be independent and uniformly chosen from $\{-1, 1\}$, and let 
\begin{equation*}
 f(x_1, \dots x_n) = \sum_{1 \leq i_1 \dots \leq i_k} a_{i_1 \dots i_k} x_{i_1} \dots x_{i_k}
\end{equation*}
 be a degree $k$ homogeneous polynomial with at least $mn^{k-1}$ nonzero coefficients.  Then
\begin{equation*}
 \sup_c \P(f(x_1, \dots x_n)=c)=O(m^{-1/2}).
\end{equation*}
If the above concentration is at least $\Omega_k(m^{-k/2+\epsilon})$, then $f$ differs in only a few coefficients from a polynomial which factors.   
\end{conjecture}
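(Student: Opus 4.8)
The plan is to run a simultaneous induction on the degree $k$, proving in one package the multilinear inverse statement (the preceding Conjecture), the direct polynomial bound $\sup_c\P(f=c)=O(m^{-1/2})$, and the polynomial inverse statement, with the architecture copied from the proof of Theorem~\ref{thm:quadlo}. The base degree $k=2$ is supplied by Theorem~\ref{thm:quadlo}, Theorem~\ref{thm:inversebi}, and Lemma~\ref{lemma:rankone}.

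For the inductive step I would begin with two harmless reductions. As in the proof of Corollary~\ref{cor:quadlo2}, a greedy minimum-degree argument on the $k$-uniform hypergraph whose edges index the nonzero coefficients of $f$ lets us pass to a sub-collection of variables in which \emph{every} variable lies in $\Omega_k(m)$ nonzero coefficients, at the cost of an absolute constant. Second, using $x_i^2=1$ we may assume $f$ is a multilinear polynomial. I would then note that naive iterated decoupling does not suffice: it only gives $\P(f=c)^{2^{k-1}}=O_k(m^{-1/2})$, hence $\P(f=c)=O_k(m^{-1/2^{k-1}})$, far from the conjectured exponent. So, exactly as for Theorem~\ref{thm:quadlo}, the sharp direct bound must come out of the inverse machinery, by contradiction.

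Accordingly, assume $\P(f=c)\ge m^{-1/2+\delta}$ (for the direct bound) --- or merely $\P(f=c)\ge\Omega_k(m^{-k/2+\epsilon})$ (for the inverse statement). A single decoupling step, using the lemma employed in the proof of Lemma~\ref{lemma:rankone} and a balanced partition of the variables, annihilates every monomial supported entirely on one side; after collapsing repeated indices via $x_i^2=1$ what remains is a form that is multilinear of degree $k$ in the $Y$-variables and the difference variables of the $Z$-side, and that is concentrated at level $\Omega(m^{-1+\delta'})$ (resp.\ $\Omega_k(m^{-k+\epsilon'})$). For $k=2$ this is the setting of Theorem~\ref{thm:inversebi}; for $k>2$ it is a multilinear form whose inverse theorem the induction hypothesis provides. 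Either way the output is that the form ``factors locally'' off an exceptional set of size $O_\epsilon(m/\log^6 m)$. Running this over a shattering family of balanced partitions of the variables, as in the proof of Lemma~\ref{lemma:rankone}, forces the identities $a_{i_1\cdots i_k}\,a_{j_1\cdots j_k}=a_{j_1 i_2\cdots i_k}\,a_{i_1 j_2\cdots j_k}$ to hold for all but $o(n^k)$ index tuples, and these glue into a global near-factorization $f\approx g_1\cdots g_r$ into forms of strictly smaller degree. The direct bound then closes: bounding $\P(g_1\cdots g_r=c)$ reduces, after exposing all but the variables appearing in $g_1$, to anti-concentration of a single low-degree form against a correlated target, which the degree-$(<k)$ induction hypothesis controls at level $O(m^{-1/2})$ --- contradicting $\P(f=c)\ge m^{-1/2+\delta}$.

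The hard part is that this is circular unless the multilinear inverse theorem is already in hand, so the induction must genuinely carry the multilinear and polynomial threads together. Worse, the step that bounds the concentration of the factored form $g_1\cdots g_r$ against a correlated linear (or low-degree) target is, for $k=2$, precisely Lemma~\ref{lemma:quadfactor}, whose proof rests on the weighted Szemer\'{e}di--Trotter theorem (Theorem~\ref{thm:weightst}); for degree $k$ one needs an analogous incidence bound for points and algebraic hypersurfaces (or families of curves) in $\R^{k}$, and the available bounds there are far weaker than in the plane --- this, more than the decoupling bookkeeping, is what I expect to be the true obstruction. A secondary, more routine difficulty is quantitative control of the exceptional coefficient sets: each decoupling step and each of the $O(\log n)$ members of the shattering family costs an $O(m/\log^6 m)$ fraction, and one must check that these compound to $o(n^k)$ after the full induction, which should follow by rerunning the estimate in Lemma~\ref{lemma:largesubmatrix} with the $k$-dependence tracked explicitly.
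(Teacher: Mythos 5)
The statement you are attempting to prove is a \emph{conjecture} in the paper, not a theorem: no proof is given, the best known direct bound (from \cite{CTV}) has exponent $m^{-2^{-\frac{k^2+k}{2}}}$ rather than $m^{-1/2}$, and the paper states explicitly that ``for the second half, we do not have a proof of this conjecture even in the case $k=2$.'' So there is no paper proof for your proposal to be compared against, and any genuine proof here would resolve an open problem.

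Your proposal correctly recognizes this and is honestly an outline of an attack rather than a proof. The scaffolding you describe --- a simultaneous induction carrying multilinear and polynomial threads, a decoupling step against a balanced partition, shattering families to globalize the local factorization of coefficients, and a concentration bound for the resulting factored form --- is a reasonable extrapolation of the paper's $k=2$ machinery (Lemma~\ref{lemma:rankone}, Theorem~\ref{thm:inversebi}, Lemma~\ref{lemma:quadfactor}). But the two gaps you flag at the end are genuine and are not closed. First, the analogue of Lemma~\ref{lemma:quadfactor} would require an incidence bound of Szemer\'{e}di-Trotter strength for points against degree-$(k-1)$ algebraic curves or hypersurfaces, and no such bound is available; the known higher-dimensional incidence results are much weaker, so the contradiction step does not go through. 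Second, the multilinear inverse statement you wish to invoke as the induction hypothesis is itself only conjectured in the paper, so the induction has no base beyond $k=2$. In short, your proposal is a sensible roadmap but not a proof, and it does not constitute progress beyond what the paper already indicates as the obstruction.
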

In \cite{CTV}, a proof of the first half of this conjecture was given with $m^{-1/2}$ replaced by $m^{-2^{-\frac{k^2+k}{2}}}$.  For the second half, we do not have a proof of this conjecture even in the case $k=2$.  

\textbf{Acknowledgements:} The author wishes to thank Ernie Croot, Tali Kaufman, Endre Szemer\'{e}di, Prasad Tetali, Van Vu, and Philip Matchett Wood for enlightening conversations and helpful comments on earlier versions of this paper.  He is particularly indebted to Croot for providing much of the argument for Lemma \ref{lemma:azplusb}.

\end{document}